\newcommand{\dd}{{d}}
\newtheorem{theorem}[definition]{Theorem}
\newtheorem{prop}[definition]{Proposition}
\newtheorem{lem}[definition]{Lemma}
\newtheorem{cor}[definition]{Corollary}
\begin{document}
\begin{frontmatter}

\title{Almost sure invariance principle for dynamical systems by
spectral methods}
\runtitle{Invariance principle via spectral methods}

\begin{aug}
\author[A]{\fnms{S\'{e}bastien} \snm{Gou\"{e}zel}\corref{}\ead[label=e1]{sebastien.gouezel@univ-rennes1.fr}}
\runauthor{S. Gou\"{e}zel}
\affiliation{IRMAR, Universit\'{e} de Rennes 1}
\address[A]{IRMAR\\
CNRS UMR 6625\\
Universit\'{e} de Rennes 1\\
35042 Rennes\\
France\\
\printead{e1}} 
\end{aug}

\pdfauthor{Sebastien Gouezel}

\received{\smonth{7} \syear{2009}}
\revised{\smonth{1} \syear{2010}}

%
\begin{abstract}
We prove the almost sure invariance principle for stationary
$\mathbb{R}^d$-valued random processes (with very precise
dimension-independent error terms), solely under a strong assumption
concerning the characteristic functions of these processes. This
assumption is easy to check for large classes of dynamical systems or
Markov chains using strong or weak spectral perturbation arguments.
\end{abstract}

%
\begin{keyword}[class=AMS]
\kwd{60F17}
\kwd{37C30}.
\end{keyword}
\begin{keyword}
\kwd{Almost sure invariance principle}
\kwd{coupling}
\kwd{transfer operator}.
\end{keyword}

\end{frontmatter}

The almost sure invariance principle is a very strong
reinforcement of the central limit theorem: it ensures that the
trajectories of a process can be matched with the trajectories
of a Brownian motion in such a way that almost surely the
error between the trajectories is negligible compared to the
size of the trajectory (the result can be more or less precise,
depending on the specific error term one can obtain). These kinds of
results have a lot of consequences (see,
e.g., Melbourne and Nicol \cite{melbournenicolvectorASIP} and
references therein).

Such results are well known for one-dimensional processes,
either independent or weakly dependent (see, among many others,
Denker and Philipp \cite{denkerASIP}, Hofbauer and Keller
\cite{HofbauerKellerVar}), and for independent
higher-dimensional processes \cite{einmahlasipRd,zaitsevasipRd}.
However, for weakly dependent higher-dimensional processes,
difficulties arise since the techniques
relying on the Skorokhod representation theorem do not work
efficiently. In this direction, an approximation argument
introduced by Berkes and Philipp \cite{berkesphilipp} was recently generalized
to a large class of weakly dependent sequences in
Melbourne and Nicol \cite{melbournenicolvectorASIP}. Their results
give explicit
error terms in the vector-valued almost sure invariance
principle and are applicable when the variables under consideration
can be well approximated with respect to a suitably chosen
filtration. In particular, these results apply to a large range
of dynamical systems when they have some Markovian behavior and
sufficient hyperbolicity.

Unfortunately, it is quite common to encounter dynamical
systems for which there is no natural well-behaved filtration.
It is, nevertheless, often easy to prove classical limit
theorems, by using another class of arguments relying on
spectral theory. These arguments automatically yield a very
precise description of the characteristic functions of the
process under consideration, thereby implying limit results. It is
therefore desirable to develop an abstract argument, showing
that sufficient control on the characteristic functions of a
process implies the almost sure invariance principle for
vector-valued observables. This is our goal in this paper.
Berkes and Philipp \cite{berkesphilipp}, Theorem 5, gives such a
result, but its
assumptions are too strong for the applications we have in
mind. Moreover, even when the previous approaches are
applicable, our method gives much sharper error terms.

We will state our main probabilistic result,
Theorem \ref{main_thm_stat}, in the next section and
describe applications to dynamical systems and Markov chains in
Section \ref{sec_dynapp}. The remaining sections are devoted to
the proof of the main theorem.

\section{Statement of the main result}

For $d>0$, let us consider an $\mathbb{R}^d$-valued process
$(A_0,A_1,\ldots)$, bounded in $L^p$ for some $p>2$. Under
suitable assumptions to be introduced below, we wish to show
that it can be almost surely approximated by a Brownian motion.
\begin{definition}
For $\lambda\in(0,1/2]$ and $\Sigma^2$ a (possibly degenerate)
symmetric semi-positive-definite $d\times d$ matrix, we say
that an $\mathbb{R}^d$-valued process $(A_0,A_1,\ldots)$ satisfies an
almost sure invariance principle with error exponent $\lambda$
and limiting covariance $\Sigma^2$ if there exist a probability
space $\Omega$ and two processes $(A_0^*,A_1^*,\ldots)$ and
$(B_0,B_1,\ldots)$ on $\Omega$ such that:
\begin{enumerate}
\item the processes $(A_0,A_1,\ldots)$ and $(A_0^*,A_1^*,\ldots)$
have the same distribution;
\item the random variables $B_0,B_1,\ldots$ are independent and
distributed as\break $\mathcal{N}(0,\Sigma^2)$;
\item almost surely in $\Omega$,
%
%
\begin{equation}
\Biggl|\sum_{\ell=0}^{n-1} A_\ell^* -\sum_{\ell=0}^{n-1}B_\ell
\Biggr|=o(n^{\lambda}).
\end{equation}
\end{enumerate}
\end{definition}

A Brownian motion at integer times coincides with a sum of
i.i.d. Gaussian variables, hence this definition can also be
formulated as an almost sure approximation by a Brownian
motion, with error $o(n^{\lambda})$.

Under some assumptions on the characteristic function of
$(A_0,A_1,\ldots)$, we will prove that this process satisfies an
almost sure invariance principle. To simplify notation, for
$t\in\mathbb{R}^d$ and $x\in\mathbb{R}^d$, we will write $e^{itx}$
instead of
$e^{i\langle t,x\rangle}$.

Let us state our main assumption (\ref{Hindep}), ensuring that
the process we consider is close enough to an independent
process: there exist $\varepsilon_0>0$ and $C, c>0$ such that for
any $n,m>0$, $b_1<b_2<\cdots<b_{n+m+1}$, $k>0$ and
$t_1,\ldots,t_{n+m}\in\mathbb{R}^d$ with $|t_j|\leq\varepsilon_0$, we
have
{\renewcommand{\theequation}{H}
\begin{eqnarray}
\label{Hindep}
&& \bigl| E \bigl( e^{i\sum_{j=1}^{n} t_j (\sum_{\ell
=b_j}^{b_{j+1}-1} A_\ell) +
i\sum_{j=n+1}^{n+m}t_j (\sum_{\ell=b_{j}+k}^{b_{j+1}+k-1}A_\ell
)}
\bigr)
\nonumber\\
&&\quad{}
-E \bigl( e^{i\sum_{j=1}^{n} t_j (\sum_{\ell=b_j}^{b_{j+1}-1}
A_\ell)} \bigr)\cdot
E \bigl( e^{i\sum_{j=n+1}^{n+m}t_j (\sum_{\ell
=b_{j}+k}^{b_{j+1}+k-1}A_\ell)} \bigr) \bigr|
\\
&&\qquad
\leq C (1+{\max}|b_{j+1}-b_j|)^{C(n+m)} e^{-c k}.\nonumber\\[-12pt]\nonumber
\end{eqnarray}}
\indent This assumption says that if one groups the random variables
into $n+m$ blocks, then a gap of size $k$ between two blocks
gives characteristic functions which are exponentially close
(in terms of $k$) to independent characteristic functions, with
an error which is, for each block, polynomial in terms of the
size of the block. This control is only required for Fourier
parameters $t_j$ close to $0$.

Of course, the assumption is trivially satisfied for independent random
variables. The interesting feature of this assumption is that it
is also very easy to check for dynamical systems when the
Fourier transfer operators are well understood; see Theorem
\ref{thm_I} below.

Our main theorem follows.
\begin{theorem}
\label{main_thm_stat}
Let $(A_0,A_1,\ldots)$ be a centered $\mathbb{R}^d$-valued stationary
process, in $L^p$ for some $p>2$, satisfying (\ref{Hindep}).
Then:
\begin{enumerate}
\item the covariance matrix $\operatorname{cov}(\sum_{\ell=0}^{n-1}
A_\ell)/n$ converges to a matrix $\Sigma^2$;
\item the sequence $\sum_{\ell=0}^{n-1}A_\ell/\sqrt{n}$
converges in distribution to $\mathcal{N}(0,\Sigma^2)$;
\item the process $(A_0,A_1,\ldots)$ satisfies an almost sure invariance
principle with limiting covariance $\Sigma^2$, for any
error exponent
%
%
\setcounter{equation}{1}
\begin{equation}
\lambda> \frac{p}{4p-4}=\frac{1}{4}+ \frac{1}{(4p-4)}.
\end{equation}
\end{enumerate}
\end{theorem}

When $p=\infty$, the condition on the error becomes $\lambda
> 1/4$, which is quite good and independent of the dimension.
This condition $\lambda> 1/4$ had previously been obtained
only for very specific classes of dynamical systems (in
particular, closed under time reversal) for real-valued
observables (see, e.g., Field, Melbourne and T{\"o}r{\"o}k
\cite{FMTasip}, Melbourne and T{\"o}r{\"o}k \cite{melbournetorok}).

If the process is not stationary, then we need an additional
assumption to ensure the (fast enough) convergence to a normal
distribution.
\begin{theorem}
\label{main_thm}
Let $(A_0,A_1,\ldots)$ be an $\mathbb{R}^d$-valued process, bounded in
$L^p$ for some $p>2$, satisfying (\ref{Hindep}). Assume,
moreover, that ${\sum}|E(A_\ell)|<\infty$ and that there exists
a matrix $\Sigma^2$ such that, for any $\alpha>0$,
%
%
\begin{equation}
\label{covOK}
\Biggl|\operatorname{cov} \Biggl(\sum_{\ell=m}^{m+n-1} A_\ell
\Biggr) - n\Sigma
^2 \Biggr|\leq C n^\alpha,
\end{equation}
uniformly in $m,n$. The sequence
$\sum_{\ell=0}^{n-1}A_\ell/\sqrt{n}$ then converges in distribution
to $\mathcal{N}(0,\Sigma^2)$. Moreover, the process $(A_0,A_1,\ldots)$
satisfies an almost sure invariance principle, with limiting
covariance $\Sigma^2$, for any error exponent $\lambda>
p/(4p-4)$.
\end{theorem}

Theorem \ref{main_thm_stat} is, in fact, a consequence of Theorem
\ref{main_thm} since we will prove in Lemma
\ref{covsommeborne} that a stationary process satisfying
(\ref{Hindep}) always satisfies (\ref{covOK}) (moreover, this
inequality holds with $\alpha=0$).

Contrary to the results of Berkes and Philipp \cite{berkesphilipp},
our results
are dimension-independent for i.i.d. random variables (but
they are not optimal in this case---see Einmahl \cite{einmahlasipRd},
Za{\u\i}tsev \cite{zaitsevasipRd,zaitsevLp}---for i.i.d. sequences
in $L^p$,
$2<p<\infty$, the almost sure invariance principle holds for
any error exponent $\lambda\geq1/p$).

In this paper, $C$ will denote a positive constant whose
precise value is irrelevant and may change from line to line.

\section{Applications}
\label{sec_dynapp}

\subsection{Coding characteristic functions}
\label{sec_coding}

Let us first consider a very simple example: let $T(x)=2x \operatorname{mod}
1$ on the circle $S^1=\mathbb{R}/\mathbb{Z}$ and consider a Lipschitz function
$f\dvtx S^1 \to\mathbb{R}^d$ of vanishing average for Lebesgue measure. We
would like to prove an almost sure invariance principle for the
process $(f(x),f(Tx),f(T^2 x),\ldots)$, where $x$ is distributed
on $S^1$ according to Lebesgue measure. Define an operator
$\mathcal{L}_t$ on Lipschitz functions by $\mathcal{L}_t u(x)=\sum_{T(y)=x}
e^{it f(y)}u(y)/2$. It is then easy to check that for any
$t_0,\ldots,t_{n-1}$ in $\mathbb{R}^d$,
%
%
\begin{equation}
E ( e^{i \sum_{\ell=0}^{n-1} t_\ell f \circ T^\ell} )
=\int\mathcal{L}_{t_{n-1}}\cdots\mathcal{L}_{t_0}1 (x) \,\dd x.
\end{equation}
Using the good spectral properties of the operators $\mathcal{L}_t$,
it is not very hard to show that this implies (\ref{Hindep}).

In more complicated situations, it is often possible to encode
in the same way the characteristic functions of the process
under consideration into a family of operators. However, these
operators may act on complicated Banach spaces (of
distributions or measures). It is therefore desirable to
introduce a more abstract setting that encompasses the
essential properties of such a coding, as follows.

Consider an $\mathbb{R}^d$-valued process $(A_0,A_1,\ldots)$. Let
$\mathcal{B}$ be a Banach space and let $\mathcal{L}_t$ (for $t\in
\mathbb{R}^d$,
$|t|\leq\varepsilon_0$) be linear operators acting continuously
on $\mathcal{B}$. Assume that there exist $u_0\in\mathcal{B}$ and
$\xi_0\in
\mathcal{B}'$ (the dual of $\mathcal{B}$) such that for any
$t_0,\ldots,t_{n-1}\in\mathbb{R}^d$ with $|t_j|\leq\varepsilon_0$,
%
%
\begin{equation}
\label{eq_coding}
E ( e^{i\sum_{\ell=0}^{n-1} t_\ell A_\ell} )
=\langle\xi_0, \mathcal{L}_{t_{n-1}} \mathcal{L}_{t_{n-2}}\cdots
\mathcal{L}_{t_1} \mathcal{L}
_{t_0} u_0 \rangle.
\end{equation}
In this case, we say that the characteristic function of
$(A_0,A_1,\ldots)$ is coded by $(\mathcal{B}, (\mathcal
{L}_t)_{|t|\leq
\varepsilon_0}, u_0,\xi_0)$.

We claim that the assumption (\ref{Hindep}) follows from
suitable assumptions on the operators $\mathcal{L}_t$, which we now
describe.
\begin{enumerate}[(I2)]
\item[(I1)] One can write
$\mathcal{L}_0=\Pi+Q$, where $\Pi$ is a one-dimensional projection
and $Q$ is an operator on $\mathcal{B}$, with $Q\Pi=\Pi Q=0$ and
$\| Q^n \|_{\mathcal{B}\to\mathcal{B}} \leq C \kappa^n$ for some
$\kappa<1$.
\item[(I2)] There exists $C>0$ such that
$\| \mathcal{L}_t^n \|_{\mathcal{B}\to\mathcal{B}} \leq C$ for all
$n\in\mathbb{N}$
and all small enough $t$.
\end{enumerate}
We will denote this set of conditions by (I).
\begin{theorem}
\label{thm_I}
Let $(A_\ell)$ be a process whose characteristic function is
coded by a family of operators $(\mathcal{L}_t)$ and which is bounded
in $L^p$ for some $p>2$. Assume that \textup{(I)} holds. There then
exist $a\in\mathbb{R}^d$ and a matrix $\Sigma^2$ such that
$ (\sum_{\ell=0}^{n-1} A_\ell-na )/\sqrt{n}$ converges
to $\mathcal{N}(0,\Sigma^2)$. Moreover, the process
$(A_\ell-a)_{\ell\in\mathbb{N}}$ satisfies an almost sure invariance
principle with limiting covariance $\Sigma^2$ for any error
exponent larger than $p/(4p-4)$.
\end{theorem}

The proof will exhibit $a$ as the limit of $E(A_\ell)$, give a
formula for $\Sigma^2$ and derive the theorem from Theorem
\ref{main_thm} since (\ref{Hindep}) and (\ref{covOK}) follow
from (I). Even better, under the assumptions of
Theorem \ref{thm_I}, we have
%
%
\begin{equation}
\label{covOK2}
\Biggl|\operatorname{cov} \Biggl(\sum_{\ell=m}^{m+n-1} A_\ell
\Biggr) - n\Sigma
^2 \Biggr|\leq C.
\end{equation}
This is proved in Lemma \ref{covsommeborne} below.
\begin{rmk}
Let us stress that the assumptions of this theorem are
significantly weaker than those of similar results in the
literature: we do not require that a perturbed eigenvalue has a
good asymptotic expansion, or even that such an eigenvalue
exists. In particular, to the best of the author's knowledge,
the central limit theorem was not known under the assumptions
of Theorem \ref{thm_I}.
\end{rmk}

Before we prove Theorem \ref{thm_I} at the end of this
section, let us describe some applications. We will explain how to
check (I) in several practical situations. Let $T\dvtx X\to X$ be
a dynamical system, let $\mu$ be a probability measure
(invariant or not) and let $f\dvtx X\to\mathbb{R}^d$. We want to study the
process $(f,f\circ T,f\circ T^2,\ldots)$.

\subsection{Strong continuity}
\label{sec_strong}

Assume that the characteristic function of the process
$(f,f\circ T,f\circ T^2,\ldots)$ can be coded by a family of
operators $\mathcal{L}_t$ on a Banach space $\mathcal{B}$ and that the
operator $\mathcal{L}_0$ satisfies (I1), that is, it has a simple
eigenvalue at $1$, the rest of its spectrum being contained in
a disk of radius $\kappa<1$ (such an operator is said to be
\textit{quasicompact}).
\begin{prop}
\label{propI2}
If the family $\mathcal{L}_t\dvtx\mathcal{B}\to\mathcal{B}$ depends
continuously on the
parameter $t$ at $t=0$, then \textup{(I2)} is satisfied.
\end{prop}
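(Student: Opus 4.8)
The plan is to show that condition \condId follows from \condIu together with continuity of $t\mapsto\boL_t$ at $t=0$, by a classical perturbation argument: the spectral decomposition of $\boL_0$ persists under small perturbations, the perturbed leading eigenvalue stays in the closed unit disk (because $\boL_t$ is a Fourier-type operator dominated by a Markov operator, so $\norm{\boL_t^n}$ cannot grow exponentially pointwise), and the perturbed complementary part keeps a spectral radius $<1$. First I would invoke the standard result (e.g.\ Kato's perturbation theory, or the Dunford functional calculus applied to a fixed contour separating $1$ from the disk of radius $\kappa$) to write, for $|t|$ small enough, $\boL_t=\lambda(t)\Pi_t+Q_t$, where $\Pi_t$ is a one-dimensional projection depending continuously on $t$ with $\Pi_0=\Pi$, $Q_t$ depends continuously on $t$ with $Q_0=Q$, $\Pi_t Q_t=Q_t\Pi_t=0$, and $\lambda(t)$ is the simple eigenvalue near $1$ with $\lambda(0)=1$. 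By continuity of the spectral radius on the complementary subspace, after shrinking the neighborhood we may assume $\norm{Q_t^n}_{\boB\to\boB}\leq C\kappa_1^n$ for some fixed $\kappa_1\in(\kappa,1)$, uniformly in small $t$; this already bounds the $Q_t$-part of $\boL_t^n$ uniformly.

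The remaining point is to control $\lambda(t)^n\Pi_t$, i.e.\ to show $|\lambda(t)|\leq 1$ for all small $t$. Here I would use the coding relation~\eqref{eq_coding}: taking all $t_\ell$ equal to a fixed small $t$ gives $\langle\xi_0,\boL_t^n u_0\rangle=E(e^{it\sum_{\ell=0}^{n-1}A_\ell})$, which is bounded by $1$ in modulus for every $n$. If $|\lambda(t)|>1$ for some small $t$ with $\Pi_t u_0$ not annihilated by $\xi_0$, then $\langle\xi_0,\boL_t^n u_0\rangle=\lambda(t)^n\langle\xi_0,\Pi_t u_0\rangle+\langle\xi_0,Q_t^n u_0\rangle$ would blow up, a contradiction. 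One must handle the degenerate possibility $\langle\xi_0,\Pi_t u_0\rangle=0$: but by continuity $\langle\xi_0,\Pi_0 u_0\rangle=\langle\xi_0,\Pi u_0\rangle$, and this is nonzero because otherwise $\langle\xi_0,\boL_0^n u_0\rangle=\langle\xi_0,Q^n u_0\rangle\to 0$, contradicting $\langle\xi_0,\boL_0^n u_0\rangle=E(e^{i\cdot 0\cdot\sum A_\ell})=1$. Hence, shrinking the neighborhood once more, $\langle\xi_0,\Pi_t u_0\rangle$ stays bounded away from $0$, and the boundedness of $\langle\xi_0,\boL_t^n u_0\rangle$ forces $|\lambda(t)|\leq 1$.

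Finally, putting the two estimates together: for small $t$ and all $n$,
\[
\norm{\boL_t^n}_{\boB\to\boB}\leq |\lambda(t)|^n\norm{\Pi_t}_{\boB\to\boB}+\norm{Q_t^n}_{\boB\to\boB}\leq \sup_{|t|\text{ small}}\norm{\Pi_t}_{\boB\to\boB}+C\kappa_1^n/(1-\kappa_1)\cdot(1-\kappa_1),
\]
which is bounded by a constant independent of $n$ and of small $t$, since $\norm{\Pi_t}$ is continuous hence bounded near $0$. This is exactly \condId. The step I expect to be the main obstacle — or at least the one needing the most care — is ruling out $|\lambda(t)|>1$: naively one only knows $\norm{\boL_t^n u_0}$ is not too large when paired against $\xi_0$, so one must argue that $\xi_0$ does not happen to be orthogonal to the leading eigendirection $\Pi_t u_0$, which is where the normalization $E(1)=1$ at $t=0$ and the continuity of $\Pi_t$ enter crucially.
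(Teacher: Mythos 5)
Your proposal is correct and follows essentially the same route as the paper: classical perturbation theory gives $\boL_t=\lambda(t)\Pi_t+Q_t$ with uniform control on $Q_t^n$, and the boundedness of the characteristic function $\langle\xi_0,\boL_t^n u_0\rangle$ forces $|\lambda(t)|\leq 1$ once one checks, exactly as you do via $\langle\xi_0,\boL_0^N u_0\rangle=1$, that $\langle\xi_0,\Pi_t u_0\rangle\to\langle\xi_0,\Pi u_0\rangle=1\neq 0$. Your inline verification of this nondegeneracy is the same computation the paper records as \eqref{vaut1}.
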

\begin{pf}
By classical perturbation theory, the spectral picture for
$\mathcal{L}_0$ persists for small $t$: we can write
$\mathcal{L}_t=\lambda(t)\Pi_t+Q_t$, where $\lambda(t)\in\mathbb
{C}$, $\Pi_t$
is a one-dimensional projection and $\| Q_t^n \|\leq C
\kappa^n$ for some $\kappa<1$, uniformly for small $t$. If
$|\lambda(t)|\leq1$ for small $t$, then we obtain (I2).

For small $t$, we have
%
%
\begin{eqnarray}
\label{eq_expect}
E(e^{it \sum_{\ell=0}^{n-1}f\circ T^\ell})&=&\langle\xi_0,
\mathcal{L}
_t^n u_0 \rangle
=\lambda(t)^n \langle\xi_0, \Pi_t u_0 \rangle+ \langle\xi_0,
Q_t^n u_0 \rangle
\nonumber\\[-8pt]\\[-8pt]
&=&\lambda(t)^n \langle\xi_0, \Pi_t u_0 \rangle+ O(\kappa^n).\nonumber
\end{eqnarray}
When $t\to0$, by continuity, the quantity $\langle\xi_0,
\Pi_t u_0\rangle$ converges to $\langle\xi_0, \Pi u_0
\rangle=1$; see (\ref{vaut1}) below. In particular, for small
enough $t$, $\langle\xi_0, \Pi_t u_0\rangle\not=0$. Since the
right-hand side of (\ref{eq_expect}) is bounded by $1$, this
gives $|\lambda(t)|\leq1$, completing the proof.
\end{pf}

Let us be more specific. Let $T$ be an irreducible aperiodic
subshift of finite type, let $m$ be a Gibbs measure and let
$f\dvtx X\to\mathbb{R}^d$ be H\"{o}lder continuous with $\int f\,\dd m=0$. Let
$\mathcal{L}$ be the transfer operator associated with $T$, defined, by
duality, by $\int u\cdot v\circ T \,\dd m= \int\mathcal{L}u\cdot v\,\dd m$
and define perturbed operators $\mathcal{L}_t$ by
$\mathcal{L}_t(u)=\mathcal{L}(e^{itf}u)$. These operators code the
characteristic function of the process $(f,f\circ T,\ldots)$ and
depend analytically on $t$ [this follows from the series
expansion $e^{ix}=\sum(ix)^n/n!$ and the fact that the H\"{o}lder
functions form a Banach algebra]. The condition (I) is
checked in, for example, Guivarc'h and Hardy \cite{guivarchhardy},
Parry and Pollicott \cite{parrypollicott}. Hence, Theorem \ref{thm_I}
gives an
almost sure invariance principle for any error exponent greater than $1/4$.
This result is not new: it is already given in
Melbourne and Nicol \cite{melbournenicolvectorASIP}, although with a
weaker error
term.

If $T$ is an Anosov or Axiom A map and $f\dvtx X\to\mathbb{R}^d$ is H\"{o}lder
continuous, then the same result follows using symbolic
dynamics. One can also avoid it and directly apply Theorem
\ref{thm_I} to the transfer operator acting on a Banach space
$\mathcal{B}$ of distributions or distribution-like objects, as in
Baladi and Tsujii \cite{btaniso}, Gou{\"e}zel and Liverani \cite{GLAnosov2}.

Now, let $T\dvtx X\to X$ be a piecewise expanding map and assume
that the expansion dominates the complexity (in the sense of
Saussol \cite{saussol}, Lemma 2.2). This setting includes, in
particular, all piecewise expanding maps of the interval since
the complexity control is automatic in one dimension. Let
$f\dvtx X\to\mathbb{R}^d$ be $\beta$-H\"{o}lder continuous for some
$\beta\in
(0,1]$. The perturbed transfer operator $\mathcal{L}_t$ then acts
continuously on the Banach space $\mathcal{B}=V_\beta$ introduced in
Saussol \cite{saussol} and depends analytically on $t$ (since
$\mathcal{B}$
is a Banach algebra). With Theorem \ref{thm_I}, we get an
almost sure invariance principle for any error exponent greater than $1/4$.
This result was only known for $\dim(X)=1$ and $d=1$, thanks to
Hofbauer and Keller \cite{HofbauerKellerVar}.

This result also applies to coupled map lattices since
Bardet, Gou{\"e}zel and Keller \cite{BGKcoupling} shows (I) for
such maps. We should point out
that the Banach space $\mathcal{B}$ here is not a Banach space of
functions or distributions, but this is of no importance in
our abstract setting.

Assume, now, that $T$ is the time-one map of a contact Anosov
flow. Tsujii \cite{tsujiicontact} constructs a Banach space of
distributions on which the transfer operator $\mathcal{L}$ acts with a
spectral gap. If $f$ is smooth enough, then $\mathcal{L}_t:=
\mathcal{L}(e^{itf}\cdot)$ depends analytically on $t$. We therefore
obtain an almost sure invariance principle for any error
exponent greater than $1/4$. This result was known for real-valued
observables \cite{melbournetoroktimeone}, but is new for
$\mathbb{R}^d$-valued observables. However, our method does not apply
to the whole class of rapid-mixing hyperbolic flows, contrary
to the martingale arguments of
Melbourne and T{\"o}r{\"o}k \cite{melbournetoroktimeone}.

Finally, assume that $T\dvtx X\to X$ is a mixing Gibbs--Markov map
with invariant measure $m$, that is, it is Markov for a partition
$\alpha$ with infinitely many symbols and has the big image
property and H\"{o}lder distortion (this is a generalization of the
notion of a subshift of finite type to infinite alphabets, see,
e.g., Melbourne and Nicol \cite{melbournenicolvectorASIP}, Section
3.1, for
precise definitions). For $f\dvtx X \to\mathbb{R}^d$ and $a\in\alpha$, let
$Df(a)$ denote the best Lipschitz constant of $f$ on $a$.
Consider $f$ of zero average such that $\sum_{a\in\alpha} m(a)
Df(a)^\rho<\infty$ for some $\rho\in(0,1]$ (this class of
observables is very large, containing, in particular, all of the
weighted Lipschitz observables of Melbourne and Nicol \cite
{melbournenicolvectorASIP},
Section 3.2).
\begin{theorem}
If $f\in L^{p}$ for some $p>2$, then the process $(f,f\circ
T,\ldots)$ satisfies an almost sure invariance principle for any
error exponent $>p/(4p-4)$.
\end{theorem}

This follows from Gou{\"e}zel (\cite{gouezelnecessary}, Section 3.1), where
a Banach space $\mathcal{B}$ satisfying the assumptions of Proposition
\ref{propI2} is constructed.

It should be mentioned that the almost sure invariance principle is
invariant under the process of \textit{inducing}, that is, going
from a small dynamical system to a larger one. Many
hyperbolic dynamical systems can be obtained by inducing from
Gibbs--Markov maps and the previous theorem implies an almost
sure invariance principle for all of them (see
Melbourne and Nicol \cite{melbournenicolvectorASIP} for several examples).
\begin{rmk}
In such dynamical contexts (when the measure is invariant and
ergodic), the matrix $\Sigma^2$ is degenerate if and only if
$f$ is an $L^2$ coboundary in some direction. Indeed, if
$\Sigma^2$ is degenerate, then it follows from (\ref{covOK2}) that
there is a nonzero direction $t$ such that $\langle t, S_n
f\rangle$ is bounded in $L^2$. By Leonov's theorem (see,
e.g., Aaronson and Weiss \cite{aaronsonweiss}), this implies that
$\langle t,f
\rangle$ is an $L^2$ coboundary, that is, there exists $u\in L^2$ such
that $\langle t,f\rangle=u-u\circ T$ almost everywhere.
Conversely, this condition implies that $\Sigma^2$ is
degenerate.
\end{rmk}

\subsection{Weak continuity}
\label{sec_weak}

In several situations, the strong continuity assumptions of the
previous subsection are not satisfied, while a weaker form of
continuity holds. We describe such a setting in this subsection.

Again, assume that the characteristic function of a process
$(f,f\circ T,f\circ T^2,\ldots)$ is coded by a family of
operators $\mathcal{L}_t$ on a Banach space $\mathcal{B}$ and that the
operator $\mathcal{L}_0$ satisfies (I1), that is, it is quasicompact
with a simple dominating eigenvalue at $1$.

We do \textit{not} assume that the map $t\mapsto\mathcal{L}_t$ is
continuous from a neighborhood of $0$ to the set of linear
operators on $\mathcal{B}$, hence classical perturbation theory does
not apply. Let $\mathcal{C}$ be a Banach space containing $\mathcal
{B}$ on
which the operators $\mathcal{L}_t$ act continuously and assume that
there exist $M\geq1$, $\kappa<1$ and $C>0$ such that:
\begin{enumerate}
\item for all $n\in\mathbb{N}$ and $|t|\leq\varepsilon_0$, we have
$\| \mathcal{L}_t^n \|_{\mathcal{C}\to\mathcal{C}} \leq C M^n$;
\item for all $n\in\mathbb{N}$, all $|t|\leq\varepsilon_0$ and all
$u\in\mathcal{B}$,
we have $\| \mathcal{L}_t^n u \|_{\mathcal{B}} \leq C\kappa^n
\| u \|_\mathcal{B}+CM^n \| u \|_{\mathcal{C}}$;
\item the quantity $\| \mathcal{L}_t-\mathcal{L}_0 \|_{\mathcal
{B}\to\mathcal{C}}$ tends
to $0$ when $t\to0$.
\end{enumerate}
Then Keller and Liverani \cite{kellerliverani}, Liverani \cite
{liveranismoothkl} show that, for
small enough $t$, the operator $\mathcal{L}_t$ acting on $\mathcal
{B}$ has a
simple eigenvalue $\lambda(t)$ close to $1$ and $\mathcal{L}_t$ can
be written as $\lambda(t)\Pi_t+Q_t$, where $\Pi_t$ is a
one-dimensional projection and, for some $C>0$ and
$\tilde\kappa<1$,
\begin{eqnarray*}
\| \Pi_t \|_{\mathcal{B}\to\mathcal{B}}&\leq& C,\qquad \| Q_t^n \|
_{\mathcal{B}\to
\mathcal{B}}\leq C\tilde\kappa^n,\\
\| \Pi_t-\Pi\|_{\mathcal{B}\to\mathcal{C}} &\to& 0 \qquad\mbox{when
}t\to0.
\end{eqnarray*}
Therefore, (I2) follows from the arguments in the proof of
Proposition \ref{propI2} if we can prove that $\langle\xi_0,
\Pi_t u_0\rangle\to\langle\xi_0, \Pi u_0 \rangle$ when $t\to
0$. By the last estimate in the previous equation, this is true
if $\xi_0$ belongs not only to $\mathcal{B}'$, but also to $\mathcal{C}'$,
which is usually the case.

\subsection{Markov chains}

Consider a Markov chain $X_0,X_1,\ldots$ (with an initial
measure $\mu$ and a stationary measure $m$, possibly different
from $\mu$) on a state space $\mathcal{X}$. Also, let $f\dvtx\mathcal
{X}\to\mathbb{R}$
with $E_m(f)=0$. We want to study the process
$A_\ell=f(X_\ell)$.

Denote by $P$ the Markov operator associated with the Markov
chain and define a perturbed operator $P_t(u)=P(e^{itf} u)$.
Then
\begin{eqnarray*}
E_\mu( e^{i\sum_{\ell=0}^{n-1} t_\ell A_\ell})
&=&E_\mu\bigl( e^{i\sum_{\ell=0}^{n-2} t_\ell f(X_\ell)} \cdot
E\bigl(e^{it_{n-1}f(X_{n-1})} | X_{n-2}\bigr)\bigr)
\\
&=&E_\mu\bigl( e^{i\sum_{\ell=0}^{n-2} t_\ell f(X_\ell)} P_{t_{n-1}}1
(X_{n-2})\bigr).
\end{eqnarray*}
By induction, we obtain
%
%
\begin{equation}
E_\mu( e^{i\sum_{\ell=0}^{n-1} t_\ell A_\ell}) = \int
P_{t_0}P_{t_1} \cdots P_{t_{n-1}} 1 \,\dd\mu.
\end{equation}
This is very similar to the coding property introduced in
(\ref{eq_coding}), the (minor) difference being that the
composition is made in the reverse direction. In particular,
the proof of Theorem \ref{thm_I} still works in this context.
We obtain the following result.
\begin{prop}
Let $\mathcal{B}$ be a Banach space of functions on $\mathcal{X}$
such that
$1\in\mathcal{B}$ and integration with respect to $\mu$ is
continuous on
$\mathcal{B}$. If the operators $P_t$ satisfy the condition
\textup{(I)} on $\mathcal{B}$, then the process $f(X_\ell)$ satisfies
(\ref{Hindep}). If $f(X_\ell)$ is bounded in $L^p$ for some
$p>2$, then it follows that the process $(f(X_\ell))$ satisfies an
almost sure invariant principle for any error exponent
$\lambda>p/(4p-4)$.
\end{prop}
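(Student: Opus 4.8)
The plan is to reduce the statement to Theorem~\ref{main_thm}, in exactly the way Theorem~\ref{thm_I} is reduced to it, by checking that \condI on the operators $P_t$ forces \eqref{Hindep} and \eqref{covOK}. First I would record that the identity $E_\mu(e^{i\sum_\ell t_\ell A_\ell})=\int P_{t_0}P_{t_1}\cdots P_{t_{n-1}}1\dd\mu$ obtained above is a coding in the sense of \eqref{eq_coding}: the Banach space is $\boB$, the vector $u_0$ is $1\in\boB$, and the functional $\xi_0\in\boB'$ is $u\mapsto\int u\dd\mu$, continuous by hypothesis. The only difference with \eqref{eq_coding} is that the operators are composed in the opposite order, which is harmless: the passage from \condI to \eqref{Hindep} and \eqref{covOK} in the proof of Theorem~\ref{thm_I} manipulates only norms of products of the operators and the spectral decomposition $P_0=\Pi+Q$ of \condIu, and neither ingredient sees whether a product is read from the left or from the right. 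One just has to observe that the three facts that argument uses are available here: $\norm{Q^n}\le C\kappa^n$ (this is \condIu); $\norm{P_t^n}_{\boB\to\boB}\le C$ uniformly for small $t$ (this is \condId); and the normalization $\langle\xi_0,\Pi u_0\rangle=1$. The last one is automatic: $P_0=P$ is a Markov operator, so $P_0 1=1$, hence $1$ spans the one-dimensional range of $\Pi$ and $\Pi 1=1$, whence $\langle\xi_0,\Pi u_0\rangle=\int\Pi 1\dd\mu=\int 1\dd\mu=1$ as $\mu$ is a probability measure.

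Spelling out \eqref{Hindep}: one fixes the block configuration, uses $P_0 1=1$ to collect the operators lying between consecutive block boundaries, and writes the expectation on the left of \eqref{Hindep} as $\int\boA\,P_0^k\,\boD\,1\dd\mu$, where $\boA$ gathers the operators of the first $n$ blocks (together with the initial $P_0^{b_1}$) and $\boD$ those of the last $m$ blocks; by \condIu and \condId, both $\boA$ and $\boD$ have norm at most exponential in $n+m$. Inserting $P_0^k=\Pi+Q^k$, the $\Pi$-part is rank one and equals $E_\mu(\text{first }n\text{ blocks})\cdot\langle\nu,\boD 1\rangle$, writing $\Pi u=\langle\nu,u\rangle\,1$, while the $Q^k$-part is at most $C^{C(n+m)}\kappa^k$ by $\norm{Q^k}\le C\kappa^k$ and \condId. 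Finally $\langle\nu,\boD 1\rangle$ differs from $E_\mu(\text{last }m\text{ blocks})=\int P_0^{b_{n+1}+k}\boD 1\dd\mu$ only by $\langle\mu,Q^{b_{n+1}+k}\boD 1\rangle$, itself at most $C^{C(n+m)}\kappa^k$. Collecting the pieces, the left-hand side of \eqref{Hindep} is at most $C^{C(n+m)}e^{-ck}$ with $c=-\log\kappa$ — even stronger than \eqref{Hindep}, since the factor $(1+\max|b_{j+1}-b_j|)^{C(n+m)}$ turns out to be unnecessary, thanks to the uniformity in \condId. Differentiating the same expressions once and twice at $t=0$ yields, by the computation behind Lemma~\ref{covsommeborne}, the covariance estimate \eqref{covOK}, indeed its sharp form \eqref{covOK2}.

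It then remains to feed this into Theorem~\ref{main_thm}. The process is bounded in $L^p$ by hypothesis; the series $\sum_\ell|E_\mu(A_\ell)|$ converges because differentiating the coding identity at $0$ gives $E_\mu(A_\ell)=\langle\mu,P^{\ell+1}f\rangle=\langle\nu,f\rangle+O(\kappa^\ell)$, which tends geometrically to $a\coloneqq\langle\nu,f\rangle$, and $a=0$ since $\nu$ agrees with integration against the stationary measure $m$ — both being normalized left fixed functionals of $P$, unique by \condIu — so that $a=E_m(f)=0$ (alternatively $a=0$ follows from the ergodic theorem, ergodicity being ensured by the simplicity of the leading eigenvalue); and \eqref{covOK} has just been verified. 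Theorem~\ref{main_thm} then delivers the almost sure invariance principle for $(f(X_\ell))$, with limiting covariance $\Sigma^2$ and any error exponent $\lambda>p/(4p-4)$. I do not expect a substantial obstacle: the argument is literally the proof of Theorem~\ref{thm_I} re-run with $\boL_t$ replaced by $P_t$. The one point calling for a little care is the bookkeeping of the discrepancies caused by the initial measure $\mu$ being different from the stationary measure $m$ — these occur in \eqref{Hindep}, in $\sum|E_\mu(A_\ell)|$ and in \eqref{covOK} — but each is a pairing of $\mu$ with some power of $Q$ applied to a uniformly bounded vector, hence exponentially small, so none of them causes trouble.
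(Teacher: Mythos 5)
Your proposal is correct and is essentially the paper's own argument: the paper likewise observes that the reversed order of composition is harmless and that the proof of Theorem~\ref{thm_I} carries over verbatim with the coding $u_0=1$, $\xi_0=\int\cdot\dd\mu$ (so that $P_01=1$ gives $\Pi u=\langle\nu,u\rangle 1$ and $\langle\xi_0,\Pi u_0\rangle=1$), the splitting $P_0^k=\Pi+Q^k$ yielding \eqref{Hindep}, and then \eqref{covOK2} and Theorem~\ref{main_thm} giving the invariance principle. One small caution: where you say you ``differentiate'' the coding identity to control $E_\mu(A_\ell)$ and the covariances, \condI gives no differentiability (nor even norm continuity) of $t\mapsto P_t$ and $f$ need not belong to $\boB$, so these estimates should instead be obtained as in the third and fourth steps of the proof of Theorem~\ref{thm_I} (the smoothing variable $V$ of Proposition~\ref{prop_V} together with truncation and Plancherel) --- which is exactly the ``re-run of Theorem~\ref{thm_I}'' you invoke anyway, so this is a matter of wording rather than a gap.
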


To check condition (I), the arguments of Sections \ref{sec_strong} or
\ref{sec_weak} can be applied (if the Banach space $\mathcal{B}$ is
carefully chosen, depending on the properties of the random walk under
consideration). In particular, we refer the reader to the
article~\cite{hervepene}, where several examples are studied, including
uniformly ergodic chains, geometrically ergodic chains and iterated
random Lipschitz models. In particular, it is shown in this article
that the weak continuity arguments of Section \ref{sec_weak} are very
powerful in some situations where the strong continuity of Section
\ref{sec_strong} does not hold.

\subsection[Proof of Theorem 2.1 assuming Theorem 1.3]{Proof of
Theorem \protect\ref{thm_I} assuming Theorem \protect\ref{main_thm}}

\mbox{}

\textit{Step} 1: there exists $u_1\in\mathcal{B}$ such that, for
$t_0,\ldots,t_{n-1}\in B(0, \varepsilon_0)$,
%
%
\begin{equation}
\label{decritPi0}
\Pi( \mathcal{L}_{t_{n-1}} \cdots\mathcal{L}_{t_0} u_0)
= \langle\xi_0, \mathcal{L}_{t_{n-1}} \cdots\mathcal{L}_{t_0} u_0
\rangle u_1.
\end{equation}

Since $\Pi$ is a rank-one projection, we can write
$\Pi(u)=\langle\xi_2, u\rangle u_2$ for some $u_2\in\mathcal{B}$ and
$\xi_2\in\mathcal{B}'$ with $\langle\xi_2,u_2\rangle=1$. The trivial
equality
\[
E (e^{i\sum_{\ell=0}^{n-1} t_\ell A_\ell} )
=E (e^{i \sum_{\ell=0}^{n-1} t_\ell A_\ell+ \sum_{\ell
=n}^{n+N-1}0 \cdot A_\ell} )
\]
gives, using the coding by the operators $\mathcal{L}_t$,
\[
\langle\xi_0, \mathcal{L}_{t_{n-1}} \cdots\mathcal{L}_{t_0} u_0
\rangle
=\langle\xi_0, \mathcal{L}_0^N \mathcal{L}_{t_{n-1}} \cdots
\mathcal{L}_{t_0} u_0
\rangle.
\]
Let $u=\mathcal{L}_{t_{n-1}} \cdots\mathcal{L}_{t_0} u_0$. When $N$
tends to
infinity, $\mathcal{L}_0^N$ tends to $\Pi$. Hence, letting $N$ tend to
infinity in the previous equality, we get
%
%
\begin{equation}
\label{eqphi0}
\langle\xi_0, u\rangle= \langle\xi_0, \Pi u\rangle
= \langle\xi_0, u_2\rangle\cdot\langle\xi_2, u \rangle.
\end{equation}
Moreover,
%
%
\begin{eqnarray}
\label{vaut1}
\langle\xi_0, u_0\rangle &=& \langle\xi_0,\Pi u_0\rangle
=\lim\langle\xi_0, \mathcal{L}_0^N u_0\rangle\nonumber\\[-8pt]\\[-8pt]
&=&\lim E( e^{i\sum_{\ell=0}^{N-1}0\cdot A_\ell})=1.\nonumber
\end{eqnarray}
Taking $u=u_0$ in (\ref{eqphi0}), this implies, in particular, that
$\langle\xi_0,u_2 \rangle\not=0$. Finally,
\[
\Pi(u)=\langle\xi_2,u\rangle u_2=\langle\xi_0, u\rangle
u_2/\langle\xi_0, u_2\rangle.
\]
We thus obtain (\ref{decritPi0}) for $u_1=u_2/\langle\xi_0,
u_2\rangle$.

\textit{Step} 2: (\ref{Hindep}) holds.

Consider $b_1<\cdots< b_{n+m+1}$, as well as
$t_1,\ldots,t_{n+m}\in B(0,\varepsilon_0)$ and $k>0$. Then
%
%
\begin{eqnarray}
\label{qlmskjflmqskdf}
&&
E \bigl( e^{i\sum_{j=1}^{n} t_j (\sum_{\ell=b_j}^{b_{j+1}-1}
A_\ell) +
i\sum_{j=n+1}^{n+m}t_j (\sum_{\ell=b_{j}+k}^{b_{j+1}+k-1}A_\ell)}\bigr)\hspace*{-23pt}
\nonumber\\
&&\qquad= \langle\xi_0, \mathcal
{L}_{t_{n+m}}^{b_{n+m+1}-b_{n+m}}\cdots\mathcal{L}
_{t_{n+1}}^{b_{n+2}-b_{n+1}} \mathcal{L}_0^k
\mathcal{L}_{t_{n}}^{ b_{n+1}-b_{n}} \cdots\mathcal{L}_{t_1}^{b_2-b_1}
\mathcal{L}_0^{b_1} u_0 \rangle\hspace*{-23pt}
\nonumber\\[-8pt]\\[-8pt]
&&\qquad
= \langle\xi_0, \mathcal{L}_{t_{n+m}}^{b_{n+m+1}-b_{n+m}}\cdots
\mathcal{L}
_{t_{n+1}}^{b_{n+2}-b_{n+1}} (\mathcal{L}_0^k-\Pi)
\mathcal{L}_{t_{n}}^{ b_{n+1}-b_{n}} \cdots\mathcal{L}_{t_1}^{b_2-b_1}
\mathcal{L}_0^{b_1} u_0 \rangle\hspace*{-23pt}
\nonumber\\
&&\qquad\quad{}
+ \langle\xi_0, \mathcal
{L}_{t_{n+m}}^{b_{n+m+1}-b_{n+m}}\cdots\mathcal{L}
_{t_{n+1}}^{b_{n+2}-b_{n+1}} \Pi
\mathcal{L}_{t_{n}}^{ b_{n+1}-b_{n}} \cdots\mathcal{L}_{t_1}^{b_2-b_1}
\mathcal{L}_0^{b_1} u_0 \rangle.\hspace*{-23pt}\nonumber
\end{eqnarray}
All of the operators $\mathcal{L}_{t_i}$ satisfy
$\| \mathcal{L}_{t_i}^j \|_{\mathcal{B}\to\mathcal{B}}\leq C$. Since
$\| \mathcal{L}_0^k-\Pi\|_{\mathcal{B}\to\mathcal{B}} \leq
C\kappa^k$ for some
$\kappa<1$, it follows that the term on the penultimate line in (\ref
{qlmskjflmqskdf}) is bounded by $C^{n+m}
\kappa^k$. Moreover, by (\ref{decritPi0}), the term on the last
line is
\[
\langle\xi_0, \mathcal{L}_{t_{n+m}}^{b_{n+m+1}-b_{n+m}}\cdots
\mathcal{L}
_{t_{n+1}}^{b_{n+2}-b_{n+1}} u_1 \rangle
\cdot\langle\xi_0,\mathcal{L}_{t_{n}}^{ b_{n+1}-b_{n}} \cdots
\mathcal{L}
_{t_1}^{b_2-b_1}
\mathcal{L}_0^{b_1} u_0 \rangle.
\]
The second factor in this equation is simply $E (
e^{i\sum_{j=1}^{n} t_j (\sum_{\ell=b_j}^{b_{j+1}-1}
A_\ell)} )$. Moreover,
\begin{eqnarray*}
&&E \bigl( e^{i\sum_{j=n+1}^{n+m}t_j (\sum_{\ell
=b_{j}+k}^{b_{j+1}+k-1}A_\ell)} \bigr)
\\
&&\qquad
=\langle\xi_0, \mathcal{L}_{t_{n+m}}^{b_{n+m+1}-b_{n+m}}\cdots
\mathcal{L}
_{t_{n+1}}^{b_{n+2}-b_{n+1}} \mathcal{L}_0^{b_{n+1}+k} u_0 \rangle
\\
&&\qquad
=\langle\xi_0, \mathcal{L}_{t_{n+m}}^{b_{n+m+1}-b_{n+m}}\cdots
\mathcal{L}
_{t_{n+1}}^{b_{n+2}-b_{n+1}} \Pi u_0 \rangle+ O(C^m \kappa^{b_{n+1}+k})
\\
&&\qquad
=\langle\xi_0, \mathcal{L}_{t_{n+m}}^{b_{n+m+1}-b_{n+m}}\cdots
\mathcal{L}
_{t_{n+1}}^{b_{n+2}-b_{n+1}} u_1 \rangle+ O(C^m\kappa^{b_{n+1}+k}).
\end{eqnarray*}
Therefore, the last line of (\ref{qlmskjflmqskdf}) is equal to
\[
E \bigl( e^{i\sum_{j=1}^{n} t_j (\sum_{\ell=b_j}^{b_{j+1}-1}
A_\ell)} \bigr)\cdot
E \bigl( e^{i\sum_{j=n+1}^{n+m}t_j (\sum_{\ell
=b_{j}+k}^{b_{j+1}+k-1}A_\ell)} \bigr)
+O(C^m \kappa^{b_{n+1}+k}).
\]

We have proven that the difference to be estimated to check
(\ref{Hindep}) is bounded by $C^{n+m} \kappa^k + C^m
\kappa^{b_{n+1}+k}$ for some $C>1$ and $\kappa<1$. If we write
$C=2^{C'}$ and $\kappa= e^{-c}$ for some $c,C'>0$, then this
is error is at most
\[
2\cdot2^{C'(n+m)} e^{-ck}\leq
2 \cdot(1+{\max}|b_{j+1}-b_j|)^{C'(n+m)} e^{-ck}.
\]
This proves (\ref{Hindep}).

\textit{Step} 3: there exist $a\in\mathbb{R}^d$ and $C,\delta>0$ such
that $|E(A_\ell)-a|\leq C e^{-\delta\ell}$.

Working component by component, we can, without loss of
generality, work with one-dimensional random variables.

Enriching the probability space if necessary, we can construct
a centered random variable $V$, independent of all the
$A_\ell$ and belonging to $L^p$, whose characteristic function is
supported in $B(0,\varepsilon_0)$ (see Proposition \ref{prop_V}
for the existence of $V$). Also, let $T>0$. Then
\[
E(A_\ell)=E(A_\ell+V)=E\bigl( (A_\ell+V)1_{|A_\ell+V|\geq T}\bigr)
+\int_{|x|<T} x \,\dd P_{A_\ell+V}.
\]
The first\vspace*{1pt} term is bounded by $\| A_\ell+V \|_{L^2}
\| 1_{|A_\ell+V|\geq T} \|_{L^2} \leq C P( |A_\ell+V|>T)^{1/2}
\leq C /T^{1/2}$. Let $\phi_\ell(t)=E(e^{it A_\ell})
E(e^{itV})$ be the characteristic function of $A_\ell+V$. Let
$g_T$ be the Fourier transform of $x1_{|x|<T}$. Since the
Fourier transform on $\mathbb{R}$ is an isometry up to a constant
factor $c_1$, we have $\int_{|x|<T} x \,\dd P_{A_\ell+V}=c_1\int
\overline{g_T} \phi_\ell$, hence $E(A_\ell)= c_1\int
\overline{g_T} \phi_\ell+ O( T^{-1/2})$.

We have
\begin{eqnarray*}
\phi_\ell(t)&=&\langle\xi_0, \mathcal{L}_t \mathcal{L}_0^{\ell}
u_0 \rangle E(e^{itV})
\\
&=&\langle\xi_0, \mathcal{L}_t \Pi u_0 \rangle E(e^{itV})
+\langle\xi_0, \mathcal{L}_t (\mathcal{L}_0^{\ell}-\Pi) u_0
\rangle E(e^{itV})
=:\psi(t)+r_\ell(t).
\end{eqnarray*}
The function $\psi$ is independent of $\ell$, while the
function $r_\ell(t)$ depends on $\ell$, is bounded by
$C\kappa^\ell$ and is supported in $\{|t|\leq\varepsilon_0\}$. We
obtain
\begin{eqnarray*}
E(A_\ell) &=& c_1\int\overline{g_T} \psi+ c_1\int\overline{g_T}
r_\ell+ O( T^{-1/2})
\\
&=& c_1\int\overline{g_T} \psi
+O (\| g_T \|_{L^2} \| r_\ell\|_{L^2}) + O(T^{-1/2}).
\end{eqnarray*}
The $L^2$-norm of $g_T$ is equal to $C \| x 1_{|x|<T} \|_{L^2}=C
T^{3/2}$, therefore we obtain
\[
E(A_\ell)=c_1\int\overline{g_T} \psi+O(\kappa^\ell T^{3/2})+O(T^{-1/2}).
\]
Now, consider $k,\ell\in\mathbb{N}$. Taking
$T=\kappa^{-\min(k,\ell)/3}$, we obtain, for some $\delta>0$,
\[
|E(A_\ell)-E(A_k)|\leq C e^{-\delta\min(k,\ell)}.
\]
This shows that the sequence $E(A_\ell)$ is Cauchy,
so it converges to a limit $a$. Moreover, letting $k\to
\infty$, it also yields $|E(A_\ell)-a| \leq C e^{-\delta\ell}$,
as desired.

\textit{Step} 4: conclusion of the proof.

We claim that for any $m \in\mathbb{N}$, there exists a matrix $s_m$
such that, uniformly in $\ell,m$,
%
%
\begin{equation}
\label{msjklgsjfd}
|{\operatorname{cov}}(A_\ell, A_{\ell+m})-s_m|\leq C e^{-\delta\ell}.
\end{equation}
Since the proof is almost identical to the third step, it will
be omitted.
\begin{lem}
\label{covsommeborne}
Let $(A_\ell)$ be a process bounded in $L^p$ for some $p>2$,
satisfying (\ref{Hindep}) and satisfying (\ref{msjklgsjfd}) for some
sequence of matrices $s_m$. The series
$\Sigma^2=s_0+\sum_{m=1}^\infty(s_m+s_m^*)$ then converges in norm
and, uniformly in $m,n$,
%
%
\begin{equation}
\label{eqcovsomme}
\Biggl|\operatorname{cov} \Biggl(\sum_{\ell=m}^{m+n-1} A_\ell
\Biggr) - n\Sigma^2
\Biggr|\leq C.
\end{equation}
\end{lem}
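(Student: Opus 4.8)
The plan is to first prove a quantitative decay of correlations, namely that $\norm{\cov(A_\ell,A_{\ell+j})}\le C\rho^j$ for some $\rho<1$, uniformly in $\ell$, and then to evaluate $\cov(\sum_{\ell=m}^{m+n-1}A_\ell)$ by a bookkeeping argument organized around this decay and the hypothesis \eqref{msjklgsjfd}.

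\emph{Step 1: exponential decay of the covariances.} Fix coordinates $p,q\in\{1,\dots,d\}$, write $X=(A_\ell)_p$ and $Y=(A_{\ell+j})_q$ (two real random variables, bounded in $L^p$ with $p>2$), and set $h(s,t)=E(e^{isX+itY})-E(e^{isX})E(e^{itY})$. Applying \eqref{Hindep} with $n=m=1$, consecutive points $b_1=\ell<b_2=\ell+1<b_3=\ell+2$, gap $k=j-1$, and Fourier parameters $se_p,\,te_q$ with $|s|,|t|\le\epsilon_0$ (here $e_p$ is the $p$-th coordinate vector), the prefactor $(1+\max|b_{i+1}-b_i|)^{C(n+m)}$ collapses to a constant, so, uniformly in $\ell$,
\[
|h(s,t)|\le Ce^{-cj}\eqqcolon\eta_j\qquad(|s|,|t|\le\epsilon_0).
\]
Then I would use the quantitative second-order Taylor expansion of characteristic functions valid for variables in $L^q$ with $q=\min(p,3)>2$: from $|e^{iu}-1-iu+u^2/2|\le C|u|^q$ one gets $|E(e^{isX})-(1+isE(X)-\tfrac12 s^2E(X^2))|\le C|s|^q$, the analogous estimate for $E(e^{isX+itY})$, and, upon subtraction (the $E(X^2)$ and $E(Y^2)$ terms cancel),
\[
h(s,t)=-st\,\cov(X,Y)+O\bigl((|s|+|t|)^q\bigr),
\]
with constants depending only on $\sup_\ell\norm{A_\ell}_{L^p}$. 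Taking $s=t=\sigma\le\epsilon_0$ gives $\sigma^2|\cov(X,Y)|\le\eta_j+C\sigma^q$, and choosing $\sigma\asymp\eta_j^{1/q}$ (legitimate once $j$ is large) yields $|\cov(X,Y)|\le C\eta_j^{1-2/q}$; small $j$ are absorbed using the trivial bound $|\cov(X,Y)|\le\norm X_{L^2}\norm Y_{L^2}\le C$. Summing over the $d^2$ coordinate pairs, $\norm{\cov(A_\ell,A_{\ell+j})}\le C\rho^j$ uniformly in $\ell$ and $j$, for some $\rho<1$. Letting $\ell\to\infty$ in \eqref{msjklgsjfd} gives $\norm{s_j}\le C\rho^j$ as well; in particular $\sum_j\norm{s_j}<\infty$, so $\Sigma^2=s_0+\sum_{m\ge1}(s_m+s_m^*)$ converges in norm, which is the first assertion.

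\emph{Step 2: the covariance of the sum, and the easy terms.} Expanding the covariance of a sum and using $\cov(A_b,A_a)=\cov(A_a,A_b)^*$,
\[
\cov\Bigl(\sum_{\ell=m}^{m+n-1}A_\ell\Bigr)=\sum_{\ell=m}^{m+n-1}\cov(A_\ell,A_\ell)+\sum_{j=1}^{n-1}\sum_{\ell=m}^{m+n-1-j}\bigl(\cov(A_\ell,A_{\ell+j})+\cov(A_\ell,A_{\ell+j})^*\bigr).
\]
Writing $R_{\ell,j}=\cov(A_\ell,A_{\ell+j})-s_j$, recalling $n\Sigma^2=ns_0+n\sum_{j\ge1}(s_j+s_j^*)$, and noting that the inner sum over $\ell$ has $n-j$ terms, a rearrangement shows that $\cov(\sum_{\ell=m}^{m+n-1}A_\ell)-n\Sigma^2$ equals
\[
\sum_{\ell=m}^{m+n-1}R_{\ell,0}\;-\;\sum_{j=1}^{n-1}j\,(s_j+s_j^*)\;-\;n\sum_{j\ge n}(s_j+s_j^*)\;+\;\sum_{j=1}^{n-1}\sum_{\ell=m}^{m+n-1-j}\bigl(R_{\ell,j}+R_{\ell,j}^*\bigr).
\]
The first term is $O(1)$ by \eqref{msjklgsjfd} with zero lag (so $\norm{R_{\ell,0}}\le Ce^{-\delta\ell}$), the second is $O(1)$ because $\sum_j j\norm{s_j}<\infty$ by Step 1, and the third is $O(1)$ because $n\rho^n$ is bounded.

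\emph{Step 3: the cross terms, and the main obstacle.} The delicate piece is the last double sum. Here $R_{\ell,j}$ admits two a priori bounds: $\norm{R_{\ell,j}}\le Ce^{-\delta\ell}$ from \eqref{msjklgsjfd}, and $\norm{R_{\ell,j}}\le\norm{\cov(A_\ell,A_{\ell+j})}+\norm{s_j}\le C\rho^j$ from Step 1, hence $\norm{R_{\ell,j}}\le C\min(e^{-\delta\ell},\rho^j)$. These must be combined \emph{inside} the sum over $\ell$: splitting that sum at the crossover $\ell\asymp j$, the $O(j)$ terms with $e^{-\delta\ell}\ge\rho^j$ contribute $O(j\rho^j)$ and the remaining geometric tail contributes $O(\rho^j)$, so $\sum_{\ell=m}^{m+n-1-j}\norm{R_{\ell,j}}\le Cj\rho^j$ uniformly in $m$ and $n$; summing over $j$ gives a convergent series, hence $O(1)$ (using only the $\ell$-decay would instead produce a spurious factor of order $\log n$). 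Collecting the four pieces yields \eqref{eqcovsomme}. I expect Step~1 to be where the real difficulty lies: one cannot extract a covariance bound from the purely Fourier-analytic hypothesis \eqref{Hindep} by naively differentiating the estimate, since the characteristic functions involved are only as regular as the $L^p$ integrability permits, and the quantitative Taylor expansion above is precisely what makes the localization of the relevant second derivative possible.
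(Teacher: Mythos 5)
Your proof is correct, and its overall skeleton is the same as the paper's: first an exponential decay of the individual covariances in the lag, then the combined bound $\norm{\cov(A_\ell,A_{\ell+j})-s_j}\leq C\min(e^{-\delta\ell},\rho^j)$ obtained from this together with \eqref{msjklgsjfd}, and finally the summation of the resulting array (your splitting of the $\ell$-sum at $\ell\asymp j$ is the same bookkeeping as the paper's splitting at $j\leq 2i$ versus $j>2i$). Where you genuinely diverge is in the key decorrelation estimate, the analogue of \eqref{wmlkxjv}: the paper adds an auxiliary independent variable $V$ with compactly supported characteristic function, truncates $xy$ at level $T$, passes to the Fourier side by Plancherel so that \eqref{Hindep} can be used on $\{|t|,|u|\leq\epsilon_0\}$, and optimizes over $T$; you instead read the covariance directly off the difference of characteristic functions at a single small frequency $\sigma$, via the second-order expansion with remainder $O(|u|^q)$ for $q=\min(p,3)$ (legitimate since $\min(u^2,|u|^3)\leq|u|^q$ and the process is bounded in $L^p$ with $q\leq p$), and then optimize $\sigma\asymp e^{-cj/q}$ to get $\norm{\cov(A_\ell,A_{\ell+j})}\leq Ce^{-c(1-2/q)j}$. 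Both routes are valid; yours is more elementary and self-contained for this particular lemma (no smoothing variable, no Plancherel, no truncation bookkeeping), while the paper's smoothing-and-truncation device is the one it reuses repeatedly elsewhere (the third step of the proof of Theorem \ref{thm_I}, Lemma \ref{lemborneL2}, Proposition \ref{Lpbound}), which is why it is set up in that form here as well. Two inessential remarks: since \eqref{Hindep} requires $k>0$, your choice $k=j-1$ forces the case $j=1$ into the ``trivial bound'' regime, which your argument already accommodates; and your parenthetical claim that using only the $\ell$-decay would cost a factor $\log n$ is not accurate (it would cost a factor of order $n$), but this aside plays no role in the proof.
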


Let us temporarily accept this lemma. The process
$(A_\ell-a)$ then satisfies all the assumptions of Theorem
\ref{main_thm}. Theorem \ref{thm_I} follows from this theorem.
\begin{pf*}{Proof of Lemma \ref{covsommeborne}}
Let us first prove that for some $\delta>0$,
%
%
\begin{equation}
\label{wmlkxjv}
|{\operatorname{cov}}(A_\ell, A_{\ell+m})|\leq C e^{-\delta m}.
\end{equation}
To simplify notation, we will assume that $d=1$. Although the
estimate (\ref{wmlkxjv}) follows easily from the techniques we
will develop later in this paper, we will now give a
direct elementary proof. Let $V,V'$ be two independent random
variables, as in the third step of the previous proof. Then
\[
E(A_\ell A_{\ell+m})=E\bigl( (A_\ell+V) (A_{\ell+m}+V')\bigr)
= \int x y \,\dd P(x,y),
\]
where $P$ is the distribution of $(A_\ell+V, A_{\ell+m}+V')$.
For $T>0$, we have
\begin{eqnarray*}
\int|xy|1_{|x|>T} \,\dd P(x,y)
&=&
E \bigl( |A_\ell+V| |A_{\ell+m}+V'| 1_{|A_\ell+V|>T}\bigr)
\\
&\leq& \| A_\ell+V \|_{L^p} \| A_{\ell+m}+V' \|_{L^2} \bigl\| 1_{|A_\ell
+V|>T} \bigr\|_{L^q},
\end{eqnarray*}
where $q>1$ is chosen so that $1/p+1/2+1/q=1$. Moreover,
$\| 1_{|A_\ell+V|>T} \|_{L^q}=P( |A_\ell+V|>T)^{1/q} \leq C
T^{-1/q}$. We have proven that for some $\rho>0$, we have
$\int|xy|1_{|x|>T} \,\dd P(x,y) \leq C T^{-\rho}$. In the same
way, $\int|xy|1_{|y|>T} \,\dd P(x,y) \leq C T^{-\rho}$.
Therefore,
\[
E(A_\ell A_{\ell+m})=\int xy 1_{|x|,|y|\leq T} \,\dd P(x,y)+O(T^{-\rho}).
\]
The characteristic function $\phi$ of $(A_\ell+V,
A_{\ell+m}+V')$ is given by
\[
\phi(t,u)=E(e^{it A_\ell+i u A_{\ell+m}}) E(e^{it V}) E(e^{iu V'}).
\]
It is therefore supported in $\{|t|,|u|\leq\varepsilon_0\}$.
Denoting by $h_T$ the Fourier transform of the function $xy
1_{|x|,|y|\leq T}$ and using the fact that the Fourier
transform is an isometry up to a constant factor $c_2=c_1^2$,
we get
\[
E(A_\ell A_{\ell+m})=c_2 \int\overline{h_T} \phi+ O(T^{-\rho}).
\]
Letting $\psi(t,u)=E(e^{it A_\ell})E( e^{i u A_{\ell+m}})
E(e^{it V}) E(e^{iu V'})$, a similar computation shows that
\[
E(A_\ell) E(A_{\ell+m})=c_2 \int\overline{h_T} \psi+ O(T^{-\rho}).
\]
Therefore,
\begin{eqnarray*}
|E(A_\ell A_{\ell+m}) - E(A_\ell) E(A_{\ell+m}) |
&=& c_2 \biggl| \int\overline{h_T} (\phi-\psi) \biggr|+O(T^{-\rho})
\\
&\leq& C \| h_T \|_{L^2} \| \phi-\psi\|_{L^2}+O(T^{-\rho}).
\end{eqnarray*}
The function $\phi-\psi$ is supported in $\{|t|,|u|\leq
\varepsilon_0\}$ and (\ref{Hindep}) implies that it is bounded by
$C e^{-c m}$ for some $c>0$. Moreover, $\| h_T \|_{L^2}=C
\| xy 1_{|x|,|y|\leq T} \|_{L^2}\leq CT^3$. Finally, we
obtain
\[
| E(A_\ell A_{\ell+m}) - E(A_\ell) E(A_{\ell+m}) |
\leq C e^{-c m} T^3+ CT^{-\rho}.
\]
Choosing $T=e^{cm/4}$, this gives (\ref{wmlkxjv}).

When $\ell\to\infty$, $\operatorname{cov}(A_\ell, A_{\ell+m})$
tends to $s_m$,
by assumption. Therefore, letting $\ell$ tend to infinity in
(\ref{wmlkxjv}), we get $|s_m|\leq C e^{-\delta m}$. From
(\ref{msjklgsjfd}), we obtain
%
%
\begin{equation}
\label{eqcovklqf}
|{\operatorname{cov}}(A_\ell, A_{\ell+m})-s_m|\leq C \min( e^{-\delta
\ell},
e^{-\delta m}).
\end{equation}

We have
\begin{eqnarray*}
\operatorname{cov} \Biggl(\sum_{\ell=m}^{m+n-1} A_\ell\Biggr)
&=&\sum_{i=0}^{n-1} \operatorname{cov}(A_{i+m})
\\
&&{}+ \sum_{0\leq i < j\leq n-1} \bigl(\operatorname{cov}( A_{i+m}, A_{j+m}) +
\operatorname{cov}(
A_{i+m}, A_{j+m})^*\bigr).
\end{eqnarray*}
With (\ref{eqcovklqf}), we get
\begin{eqnarray*}
&&\Biggl| \operatorname{cov} \Biggl(\sum_{\ell=m}^{m+n-1} A_\ell
\Biggr) - \sum
_{i=0}^{n-1} s_0 - \sum_{0\leq i <
j \leq n-1} (s_{j-i}+s_{j-i}^*) \Biggr| \\
&&\qquad\leq C\sum_{i=0}^{n-1} e^{-\delta(i+m)} + C \sum_{0\leq i< j
\leq n-1} \min\bigl( e^{-\delta(i+m)}, e^{-\delta(j-i)}\bigr).
\end{eqnarray*}
Up to a multiplicative constant $C$, this is bounded by
\[
\sum_{i=0}^\infty e^{-\delta i} + \sum_{i=0}^\infty\sum
_{j=i+1}^{2i} e^{-\delta i}
+\sum_{i=0}^\infty\sum_{j=2i+1}^\infty e^{-\delta(j-i)}
<\infty.
\]
We have proven that
\[
\Biggl| \operatorname{cov} \Biggl(\sum_{\ell=m}^{m+n-1} A_\ell
\Biggr) - ns_0
-\sum_{k=1}^n (n-k) (s_k+s_k^*) \Biggr| \leq C.
\]
Since $\sum k |s_k+s_k^*| <\infty$, this proves
(\ref{eqcovsomme}).
\end{pf*}

\section{Probabilistic tools}

\subsection{Coupling}

As in Berkes and Philipp \cite{berkesphilipp}, the notion of coupling
is central
to our argument. In this subsection, we introduce this notion.

If $Z_1 \dvtx\Omega_1 \to E_1$ and $Z_2 \dvtx\Omega_2 \to E_2$ are two
random variables on two (possibly different) probability
spaces, then a \textit{coupling} between $Z_1$ and $Z_2$ is a way to
associate those random variables, usually so that this
association shows that $Z_1$ and $Z_2$ are close in some
suitable sense. Formally, a coupling between $Z_1$ and $Z_2$ is
a probability space $\Omega'$, together with two random
variables $Z'_1 \dvtx\Omega\to E_1$ and $Z'_2 \dvtx\Omega\to E_2$
such that $Z'_i$ is distributed as $Z_i$. Considering the
distribution of $(Z'_1,Z'_2)$ in $E_1\times E_2$, it follows
that one may take, without loss of generality, $\Omega=E_1 \times
E_2$, where $Z'_1$ and $Z'_2$ are the first and second projections.

The following lemma, also known as the Berkes--Philipp lemma, is
Lemma A.1 of Berkes and Philipp \cite{berkesphilipp}. It makes precise and
justifies the intuition that, given a coupling between two
random variables $Z_1$ and $Z_2$, and a coupling between $Z_2$
and another random variable $Z_3$, it is possible to
ensure that those couplings live on the same probability space,
giving a coupling between $Z_1$, $Z_2$ and $Z_3$.
\begin{lem}
\label{lem_jointwocouplings}
Let $E_i$, $i=1,2,3$, be separable Banach spaces. Let $F$ be a
distribution on $E_1\times E_2$ and let $G$ be a distribution
on $E_2 \times E_3$ such that the second marginal of $F$ equals
the first marginal of $G$. There then exist a probability space
and three random variables $Z_1,Z_2,Z_3$ defined on this space
such that the joint distribution of $Z_1$ and $Z_2$ is $F$ and
the joint distribution of $Z_2$ and $Z_3$ is $G$.
\end{lem}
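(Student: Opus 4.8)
The plan is to prove the Berkes--Philipp gluing lemma by constructing the desired probability space explicitly as a product of conditional distributions, using the existence of regular conditional probabilities on separable (hence Polish, after completion) Banach spaces. Concretely, I would take the underlying space to be $\Omega = E_1 \times E_2 \times E_3$ with $Z_1, Z_2, Z_3$ the three coordinate projections, and I need to put a single Borel probability measure $\mu$ on $\Omega$ whose projection to $E_1\times E_2$ is $F$ and whose projection to $E_2\times E_3$ is $G$. The natural candidate is obtained by ``pasting along the common marginal'' on $E_2$: write $m$ for the common law on $E_2$ (the second marginal of $F$, equivalently the first marginal of $G$), and disintegrate $F = \int_{E_2} (F_y \otimes \delta_y) \dd m(y)$ and $G = \int_{E_2} (\delta_y \otimes G_y) \dd m(y)$, where $y \mapsto F_y$ is a regular conditional distribution on $E_1$ given the $E_2$-coordinate under $F$, and $y\mapsto G_y$ is a regular conditional distribution on $E_3$ given the $E_2$-coordinate under $G$. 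Then set
  \begin{equation*}
  \mu(S) = \int_{E_2} \int_{E_1} \int_{E_3} \mathbf{1}_S(x,y,z) \dd G_y(z) \dd F_y(x) \dd m(y).
  \end{equation*}

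The key steps, in order, are as follows. First I would record that a separable Banach space, being a separable metric space, is a standard Borel space, so the disintegration (existence of regular conditional probabilities) theorem applies and furnishes the measurable families $y\mapsto F_y$ and $y\mapsto G_y$; this is where the separability hypothesis on the $E_i$ is used. Second, I would check that the formula above defines a probability measure on the Borel $\sigma$-algebra of $E_1\times E_2\times E_3$ — measurability of the integrand in $y$ follows from the measurability built into the disintegration, and $\mu(\Omega)=1$ because $F_y$, $G_y$, and $m$ are all probability measures. Third, I would verify the two marginal conditions: for the projection onto $E_1\times E_2$, integrating out $z$ replaces $\int_{E_3} \dd G_y(z)$ by $1$, leaving $\int_{E_2}\int_{E_1} \mathbf{1}_{S\times E_3}(x,y)\dd F_y(x)\dd m(y) = F(S)$ by the defining property of the disintegration of $F$; symmetrically, integrating out $x$ gives the projection $G$ onto $E_2\times E_3$. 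Finally, pushing $\mu$ forward by the coordinate projections gives random variables $Z_1,Z_2,Z_3$ with the joint law of $(Z_1,Z_2)$ equal to $F$ and that of $(Z_2,Z_3)$ equal to $G$, as required.

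The main obstacle is purely measure-theoretic bookkeeping: one must be careful that the disintegration theorem is being applied in a setting where it genuinely holds (standard Borel / Polish, which is guaranteed here by separability — note one does not even need completeness, since the disintegration theorem holds for standard Borel spaces and every separable metrizable space is standard Borel when it is Borel-isomorphic to a Borel subset of a Polish space, but in fact it is cleanest to pass to the completion $\widehat{E_i}$, which is a separable Banach space, observe that $F$ and $G$ are still Radon there, and carry out the construction on the completions, then note the resulting measure is supported on the original Borel sets). A secondary point worth stating carefully is the joint measurability of $(y,x,z)\mapsto \mathbf{1}_S(x,y,z)$ against the product of kernels, which is handled by a monotone-class argument starting from measurable rectangles. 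None of this is deep, but it is the part that requires the hypotheses and a little care; the algebraic ``pasting'' idea itself is immediate.
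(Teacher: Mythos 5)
Your construction is correct: this is the standard proof of the gluing (Berkes--Philipp) lemma, pasting $F$ and $G$ along their common $E_2$-marginal by means of regular conditional distributions, with the conditional-independence kernel formula for $\mu$ and the marginal checks done exactly as one should. The paper itself gives no proof at all --- it simply cites this statement as Lemma A.1 of Berkes and Philipp --- so your argument supplies the missing details in essentially the way the original reference does. Two small remarks: separability is indeed what makes each $E_i$ a Polish space so that disintegration applies, but your detour through the completion $\widehat{E_i}$ is vacuous, since a Banach space is complete by definition; and it is worth stating explicitly that $(y,B)\mapsto F_y(B)$ and $(y,B)\mapsto G_y(B)$ are Markov kernels, so that the double integral defining $\mu$ is the standard composition of kernels and the monotone-class verification of measurability is routine. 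Neither point affects the validity of the proof.
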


As an application of this lemma, assume that two processes
$(X_1,\ldots,X_n)$ and $(Y_1,\ldots,Y_n)$ are given and that a
good coupling exists between variables $X$ and $Y$ distributed,
respectively, like $\sum X_i$ and $\sum Y_i$. There then exists
a coupling between $(X_1,\ldots,X_n)$ and $(Y_1,\ldots, Y_n)$
which realizes this coupling between $\sum X_i$ and $\sum Y_i$. It
is sufficient to build, simultaneously:
\begin{itemize}
\item the trivial coupling between $(X_1,\ldots,X_n)$ and $X$ such that
$X=\sum X_i$ almost
surely;
\item the given coupling between $X$ and $Y$;
\item the trivial coupling between $Y$ and $(Y_1,\ldots,Y_n)$
such that $Y=\sum Y_i$ almost surely.
\end{itemize}
This kind of argument will be used several times below,
without further details.

We will need the following lemma. It ensures that, to obtain a
coupling with good properties between two infinite processes
$(Z_1,Z_2,\ldots)$ and $(Z'_1,Z'_2,\ldots)$, it is sufficient to
do so for finite subsequences of these processes.
\begin{lem}
\label{weaklimit}
Let $u_n, v_n$ be two real sequences. Let $Z_n\dvtx\Omega\to E_n$
and\break $Z'_n\dvtx\Omega'\to E_n$ ($n\geq1$) be two sequences of
random variables, taking values in separable Banach spaces.
Assume that for any $N$ there exists a coupling between
$(Z_1,\ldots,Z_N)$ and $(Z'_1,\ldots,Z'_N)$ with
%
%
\begin{equation}
\label{mlkjiwxcvumwxcv}
P(|Z_n-Z'_n|\geq u_n) \leq v_n
\end{equation}
for any $1\leq n\leq N$. There then exists a coupling between
$(Z_1,Z_2,\ldots)$ and $(Z'_1,Z'_2,\ldots)$ such that
(\ref{mlkjiwxcvumwxcv}) holds for any $n\in\mathbb{N}$.
\end{lem}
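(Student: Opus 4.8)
The plan is to build the infinite coupling as a weak limit of the finite couplings, using a tightness-plus-diagonal argument on the space of probability measures. First I would fix the target space: by Lemma~\ref{lem_jointwocouplings} and the remark following it, a coupling between $(Z_1,\dots,Z_N)$ and $(Z'_1,\dots,Z'_N)$ may be taken to live on $\prod_{n=1}^N E_n \times \prod_{n=1}^N E_n$, i.e. it is a probability measure $\mu_N$ on the separable Banach space $F_N \coloneqq \prod_{n\le N} E_n \times \prod_{n\le N} E_n$ whose two marginals are the laws of $(Z_1,\dots,Z_N)$ and $(Z'_1,\dots,Z'_N)$ respectively, and which satisfies $\mu_N(|x_n - x'_n| \ge u_n) \le v_n$ for $n \le N$. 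The natural infinite target is $F_\infty \coloneqq \prod_{n\ge 1} E_n \times \prod_{n \ge 1} E_n$ with the product topology, which is separable and metrizable; I want a probability measure $\mu_\infty$ on $F_\infty$ with the prescribed marginals on each finite block and satisfying $\mu_\infty(|x_n-x'_n|\ge u_n)\le v_n$ for all $n$, since such a $\mu_\infty$ is exactly the desired coupling of $(Z_1,Z_2,\dots)$ and $(Z'_1,Z'_2,\dots)$.

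The key steps: view each $\mu_N$ as a measure on $F_\infty$ by pushing forward under some fixed section $F_N \hookrightarrow F_\infty$ (e.g. padding the remaining coordinates with an arbitrary fixed point); this is harmless because I only care about the finite-dimensional projections. Then establish tightness of the family $\{\mu_N\}$ on $F_\infty$: for each coordinate $n$, the $n$-th marginal of $\mu_N$ (for $N\ge n$) is the fixed law of $Z_n$, which is tight in $E_n$ (a single Borel probability measure on a separable Banach space is Radon, hence tight), and likewise for the $n$-th primed coordinate; choosing compact sets $K_n^{(\varepsilon)}\subseteq E_n$ capturing mass $\ge 1-\varepsilon 2^{-n}$ under the law of $Z_n$ (and similarly $K_n'^{(\varepsilon)}$), the product $\prod_n K_n^{(\varepsilon)} \times \prod_n K_n'^{(\varepsilon)}$ is compact in $F_\infty$ and carries $\mu_N$-mass $\ge 1-2\varepsilon$ uniformly in $N$. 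By Prokhorov's theorem, pass to a subsequence $\mu_{N_j}$ converging weakly to some $\mu_\infty$. Finally, check that $\mu_\infty$ has the required properties: for fixed $N$, the projection of $\mu_{N_j}$ to the first $N$ blocks has the correct marginals for all large $j$, and weak convergence is preserved under continuous projection, so the finite-dimensional marginals of $\mu_\infty$ are the prescribed laws — in particular the two global marginals are the laws of $(Z_n)$ and $(Z'_n)$. For the estimate, the set $\{|x_n - x'_n| \ge u_n\}$ is closed in $F_\infty$, so by the portmanteau theorem $\mu_\infty(|x_n-x'_n|\ge u_n) \le \liminf_j \mu_{N_j}(|x_n-x'_n|\ge u_n) \le v_n$, using that $N_j \ge n$ eventually.

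I would also remark that one should double-check the claimed marginals survive the weak limit: strictly, $\mu_\infty$ restricted to the first-$N$-blocks has the right marginals, but since this holds for every $N$ and finite-dimensional marginals determine a measure on a countable product, the marginals of $\mu_\infty$ on all of $\prod_n E_n$ and $\prod_n E_n$ are indeed the laws of the full processes; alternatively one invokes Kolmogorov's extension theorem to identify $\mu_\infty$ from its consistent finite-dimensional projections. The main obstacle is the tightness step — one must be a little careful that tightness of each one-dimensional marginal (which is automatic for Radon measures on separable Banach spaces) upgrades, via the $2^{-n}$ budget trick, to tightness of the whole family on the infinite product; everything else (Prokhorov, portmanteau on the closed sets $\{|x_n-x'_n|\ge u_n\}$, continuity of coordinate projections) is standard. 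One subtlety worth flagging: the sets $\{|x_n - x'_n| \ge u_n\}$ are closed but not compact, so portmanteau gives only the $\limsup$ inequality for closed sets, which is exactly what is needed, so this causes no real trouble.
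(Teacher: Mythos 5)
Your construction is the paper's own proof written out in detail: the paper also extends each finite coupling $P_N$ arbitrarily to a measure $\tilde P_N$ on $E^2$ with $E=E_1\times E_2\times\cdots$, observes that the family is tight, and takes a weak limit. Your tightness argument (each coordinate marginal is a fixed Radon measure on a separable Banach space, so a $2^{-n}$ budget of compact sets gives a single compact set of uniformly large mass in the product) and your identification of the marginals of the limit via finite-dimensional projections are correct; the only cosmetic point is that for coordinates $n>N$ the marginal of your padded $\mu_N$ is a Dirac mass at the padding point, so that point should be adjoined to $K_n^{(\varepsilon)}$, which changes nothing.

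The one step that does not work as written is the final portmanteau argument. For a closed set $F$, weak convergence $\mu_{N_j}\Rightarrow\mu_\infty$ gives $\limsup_j \mu_{N_j}(F)\le\mu_\infty(F)$, i.e.\ a \emph{lower} bound on $\mu_\infty(F)$, not the upper bound $\mu_\infty(F)\le\liminf_j\mu_{N_j}(F)$ you assert; your closing remark that the closed-set limsup inequality is ``exactly what is needed'' has the inequality backwards. And the closed-set bound can genuinely be lost in the limit, even with fixed marginals: with both marginals uniform on $[0,1]$ and couplings $Z'=Z+\tfrac12-\tfrac1j \pmod 1$, one has $\mu_j(|Z-Z'|\ge\tfrac12)=\tfrac12-\tfrac1j\le\tfrac12$ for every $j$, while the weak limit puts full mass on $\{|Z-Z'|=\tfrac12\}$. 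What weak convergence does give (apply the open-set half of portmanteau, or test against continuous functions squeezed between $1_{\{|x_n-x'_n|\ge u_n+\delta\}}$ and $1_{\{|x_n-x'_n|\ge u_n\}}$) is $\mu_\infty(|x_n-x'_n|\ge u_n+\delta)\le v_n$ for every $\delta>0$, i.e.\ the bound with strict inequality $|x_n-x'_n|>u_n$. That weaker conclusion is all that is used downstream (in Corollary \ref{cor_couplagepartiel} one simply enlarges the constant), and the paper's one-line ``any of its weak limits satisfies the required property'' is taking the same license; but as a standalone argument your portmanteau step should be corrected to the open-set version, or the conclusion of the lemma stated with strict inequality.
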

\begin{pf}
For all $N\in\mathbb{N}$, there exists a probability measure $P_N$ on
$(E_1\times\cdots\times E_N)^2$, the respective marginals of which are the
distributions of\break $(Z_1,\ldots,Z_N)$ and $(Z'_1,\ldots,Z'_N)$,
such that $P_N(|z_n-z'_n|\geq u_n) \leq v_n$ for $1\leq n\leq
N$, where $z_n$ and $z'_n$ denote the coordinates in the first
and second $E_n$ factors. Let us arbitrarily extend this
measure to a probability measure $\tilde P_N$ on $E^2$, where
$E=E_1\times E_2\times\cdots$. The sequence $\tilde P_N$ is
tight and any of its weak limits satisfies the required
property.
\end{pf}

\subsection{Prokhorov distance}

\begin{definition}
If $P,Q$ are two probability distributions on a metric space,
define their Prokhorov distance $\pi(P,Q)$ as the smallest
$\varepsilon>0$ such that $P(B)\leq\varepsilon+Q(B^\varepsilon)$ for
any Borelian set $B$, where $B^\varepsilon$ denotes the open
$\varepsilon$-neighborhood of~$B$.
\end{definition}

This distance makes it possible to construct good couplings,
thanks to the following result, known as the Strassen--Dudley theorem
\cite{billinsgleyconvergence}, Theorem 6.9.
\begin{theorem}
\label{thm_exists_coupling}
Let $X,Y$ be two random variables taking values in a metric
space with respective distributions $P_X$ and $P_Y$. If
$\pi(P_X,P_Y)<c$, then there exists a coupling between $X$
and $Y$ such that $P(d(X,Y)>c)<c$.
\end{theorem}

We now turn to the estimation of the Prokhorov distance for
processes taking values in $\mathbb{R}^d$. Let $d>0$ and $N>0$. We
consider $\mathbb{R}^{dN}$ with the norm
\[
|(x_1,\ldots,x_N)|_N={\sup_{1\leq i\leq N}} |x_i|,
\]
where $|x|$ denotes the Euclidean norm of a point $x\in\mathbb{R}^d$.
\begin{lem}
\label{lemsmoothbasic}
There exists a constant $C(d)$ with the following property. Let
$F$ and $G$ be two probability distributions on $\mathbb{R}^{dN}$ with
characteristic functions $\phi$ and~$\gamma$. For any $T'>0$,
%
%
\begin{equation}
\label{smoothing0}
\pi(F,G) \leq\sum_{j=1}^N F(|x_j|\geq T')+ (C(d)
T'^{d/2} )^N \biggl[\int_{\mathbb{R}^{dN}} |\phi-\gamma|^2
\biggr]^{1/2}.
\end{equation}
\end{lem}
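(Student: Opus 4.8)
The plan is to reduce the Prokhorov distance to an $L^2$ estimate on characteristic functions via a smoothing argument. First I would pick a smoothing parameter: fix a smooth, compactly supported probability density $\rho$ on $\R^d$ (say with $\hat\rho$ supported in the unit ball), scale it to a small parameter $\delta>0$, and let $\rho_\delta^{\otimes N}$ be the corresponding product density on $\R^{dN}$. The key observation is the standard two-step comparison
\[
\prok(F,G)\le \prok(F, F*\rho_\delta^{\otimes N}) + \prok(F*\rho_\delta^{\otimes N}, G*\rho_\delta^{\otimes N}) + \prok(G*\rho_\delta^{\otimes N}, G).
\]
For the outer two terms, convolving with $\rho_\delta^{\otimes N}$ moves each coordinate by at most $O(\delta)$ with high probability, so $\prok(F, F*\rho_\delta^{\otimes N})\le C\delta$ whenever $\delta$ is bounded by an absolute constant; same for $G$. (This is where one could also insert the truncation $F(|x_j|\ge T')$ term, but actually that term enters at the next stage — see below.) So the whole game is to bound the middle term, the Prokhorov distance between the two smoothed measures, which now have bounded, smooth densities.

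The heart of the matter is to convert a bound on the total-variation (or sup-norm) distance of the smoothed densities into a Prokhorov bound, and then to express that via the characteristic functions. For smoothed measures, $\prok(F*\rho_\delta^{\otimes N}, G*\rho_\delta^{\otimes N})$ is controlled by the $L^1$ distance of their densities, which in turn is controlled, on the region $\{|x_j|\le T' \text{ for all }j\}$, by (volume of that region)$^{1/2}$ times the $L^2$ distance of the densities, via Cauchy--Schwarz; the complementary region contributes $\sum_j F(|x_j|\ge T')$ (after also using that $G$'s tail is comparable, or by a symmetric argument). The volume of $\{|x|\le T'\}^N$ in $\R^{dN}$ is $(C(d)T'^d)^N$, giving the $(C(d)T'^{d/2})^N$ factor after taking the square root. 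Finally, by Plancherel, the $L^2$ distance of the smoothed densities is
\[
\left[\int_{\R^{dN}} \bigl|\phi\,\widehat{\rho_\delta^{\otimes N}} - \gamma\,\widehat{\rho_\delta^{\otimes N}}\bigr|^2\right]^{1/2}
\le \left[\int_{\R^{dN}} |\phi-\gamma|^2\right]^{1/2},
\]
since $|\widehat{\rho_\delta^{\otimes N}}|\le 1$; and one then lets $\delta\to 0$ (the $C\delta$ errors vanish, the $L^2$ bound does not involve $\delta$) to obtain the stated inequality without any $\delta$-term.

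The main obstacle I expect is bookkeeping the constants so that the factor in front of the $L^2$ term is exactly of the form $(C(d)T'^{d/2})^N$ with $C(d)$ depending only on $d$ — in particular, making sure the smoothing density $\rho$ and the constants from Cauchy--Schwarz, the volume computation, and the passage from $L^1$-density-distance to Prokhorov distance all combine multiplicatively in $N$ with a base that is a function of $d$ alone (not of $N$). A secondary subtlety is handling the tails: one wants only $F(|x_j|\ge T')$ on the right-hand side, not also a $G$-tail term, so I would be a little careful to split the density difference as $|f_\delta - g_\delta| \le$ (contribution supported where $f_\delta$ is large) plus an $L^2$ term, using that outside $\{|x_j|<T'\ \forall j\}$ the measure $F*\rho_\delta^{\otimes N}$ already has mass $\le \sum_j F(|x_j|\ge T') + C\delta$, and this is what survives in the limit $\delta \to 0$.
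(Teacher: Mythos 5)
Your proposal is correct and follows essentially the same route as the paper: bound $\prok(F,G)$ by the $L^1$ difference of densities on the box $\{|x_j|\le T'\ \forall j\}$ plus the tail terms, then use Cauchy--Schwarz against the indicator of the box and Plancherel to pass to $\left[\int|\phi-\gamma|^2\right]^{1/2}$ with the volume factor $(C(d)T'^{d/2})^N$. The only difference is that you make explicit, via convolution with $\rho_\delta^{\otimes N}$ and the limit $\delta\to 0$, the density-approximation step that the paper dispatches with the phrase ``after an approximation argument'' (and note your parenthetical asking for $\rho$ compactly supported \emph{and} $\hat\rho$ supported in the unit ball is impossible, but harmless since you only use $|\hat\rho_\delta|\le 1$).
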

\begin{pf}
After an approximation argument, we can assume, without loss of
generality, that $F$ and $G$ have respective densities $f$ and $g$. Then,
for any measurable set $A$,
\begin{eqnarray*}
F(A)-G(A)
&\leq&
F(A\cap{\max}|x_j|\leq T')+F( {\max}|x_j|>T')
\\
&&{}
-G(A \cap{\max}|x_j|\leq T')
\\
&\leq&
\int_{ |x_1|,\ldots,|x_N|\leq T'} |f-g| + \sum_{j=1}^N F( |x_j|>T').
\end{eqnarray*}
Therefore, $\pi(F,G)$ is bounded by last line of this
equation. To conclude, we have to estimate $\int_{
|x_1|,\ldots,|x_N|\leq T'} |f-g|$. We have
\[
\int_{ |x_1|,\ldots,|x_N|\leq T'} |f-g|
\leq\| f-g \|_{L^2} \bigl\| 1_{|x_1|,\ldots,|x_N|\leq T'} \bigr\|_{L^2}
=\| \phi-\gamma\|_{L^2} (CT')^{dN/2}
\]
since the Fourier transform is an isometry on $L^2$ up to a
factor $(2\pi)^{dN/2}$. This completes the proof.
\end{pf}

\subsection{Classical tools}

Let us recall two classical results of probability theory that
we will need later. The first is Rosenthal's inequality
\cite{rosenthal} and the second is a weak version of the
Gal--Koksma strong law of large numbers \cite{philippstout},
Theorem A1, which will be sufficient for our purposes.
\begin{prop}
\label{prop_rosenthal}
Let $X_1,\ldots,X_n$ be independent centered real random
variables and let $p>2$. There exists a constant $C(p)$ such
that
%
%
\begin{equation}
\Biggl\| \sum_{j=1}^n X_j \Biggr\|_{L^p} \leq C(p) \Biggl( \sum_{j=1}^n
E(X_j^2) \Biggr)^{1/2}
+ C(p) \Biggl(\sum_{j=1}^n E(|X_j|^p) \Biggr)^{1/p}.
\end{equation}
\end{prop}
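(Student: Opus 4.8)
The plan is to derive Rosenthal's inequality from a Hoffmann--Jørgensen type tail estimate, which is the standard tool for bounding a high moment of a sum of independent variables by its second moment plus the contribution of the largest term. Write $S_k=\sum_{j=1}^{k}X_j$, $T=\max_{1\le k\le n}|S_k|$ and $M=\max_{1\le j\le n}|X_j|$; since $|S_n|\le T$, it suffices to prove
\[
E(T^p)\le C(p)\Bigl[\Bigl(\sum_{j=1}^n E(X_j^2)\Bigr)^{p/2}+\sum_{j=1}^n E(|X_j|^p)\Bigr].
\]
Indeed, this gives the proposition after taking $p$-th roots (using $(a+b)^{1/p}\le a^{1/p}+b^{1/p}$) and bounding $E(M^p)\le\sum_j E(|X_j|^p)$; recall that $\sum_j E(X_j^2)=E(S_n^2)$ by independence and $E(X_j)=0$.

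The first step is the tail inequality: for all $\lambda,s>0$,
\[
P(T>2\lambda+s)\le P(M>s)+P(T>\lambda/2)^2 .
\]
To prove it, introduce the stopping time $\tau=\inf\{k\le n:|S_k|>\lambda\}$ (with $\inf\emptyset=\infty$). If $T>2\lambda+s$ then $\tau<\infty$; moreover, on $\{\tau<\infty\}$ one has $|S_k|\le\lambda$ for $k<\tau$, $|S_\tau|\le\lambda+M$, and $|S_k|\le\lambda+M+T'$ for $k>\tau$, where $T'=\max_{\tau\le m\le n}|S_m-S_\tau|$, so that $T\le\lambda+M+T'$. Hence $\{T>2\lambda+s\}\subseteq\{M>s\}\cup\bigl(\{\tau<\infty\}\cap\{T'>\lambda\}\bigr)$. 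The event $\{\tau=k\}$ depends only on $X_1,\dots,X_k$, while $T'$ on $\{\tau=k\}$ is a function of $X_{k+1},\dots,X_n$, so the two are independent; moreover $|S_m-S_k|\le 2T$ for $m\ge k$, whence $\{\max_{k\le m\le n}|S_m-S_k|>\lambda\}\subseteq\{T>\lambda/2\}$. Summing over $k$ gives $P(\tau<\infty,\,T'>\lambda)\le P(\tau<\infty)\,P(T>\lambda/2)=P(T>\lambda)\,P(T>\lambda/2)\le P(T>\lambda/2)^2$, which proves the claim.

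It remains to integrate the tail bound, and this is the only genuinely delicate point. First, $E(T^p)<\infty$ because $T\le\sum_j|X_j|\in L^p$. By Kolmogorov's maximal inequality, $P(T>\mu)\le\mu^{-2}\sum_j E(X_j^2)$, so one can fix $\mu_0=C(p)\bigl(\sum_j E(X_j^2)\bigr)^{1/2}$ with $P(T>\mu_0)\le\epsilon_p$, where $\epsilon_p\in(0,1)$ is to be chosen. Taking $s=\lambda$ in the tail inequality and replacing $\lambda$ by $2\mu$ gives $P(T>6\mu)\le P(M>2\mu)+P(T>\mu)^2\le P(M>2\mu)+\epsilon_p\,P(T>\mu)$ for $\mu\ge\mu_0$. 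Writing $E(T^p)=p\int_0^\infty\mu^{p-1}P(T>\mu)\,d\mu$, bounding the part of the integral over $[0,6\mu_0]$ by $(6\mu_0)^p$, and performing the substitution $\mu\mapsto 6\mu$ on $[6\mu_0,\infty)$, one obtains
\[
E(T^p)\le (6\mu_0)^p+3^p\,E(M^p)+6^p\epsilon_p\,E(T^p).
\]
Choosing $\epsilon_p=1/(2\cdot 6^p)$ --- this is exactly where the constant becomes $p$-dependent, since the change of variables costs a factor $6^p$ that the smallness $\epsilon_p$ must beat, forcing $\mu_0$ to be taken large in a $p$-dependent way --- and using $E(T^p)<\infty$ to absorb the last term, we are left with $E(T^p)\le 2(6\mu_0)^p+2\cdot 3^p E(M^p)$, which is the desired estimate. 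Since the inequality is classical, one could alternatively simply invoke Rosenthal's original argument \cite{rosenthal}.
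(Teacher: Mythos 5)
Your argument is correct, but it is worth noting that the paper does not prove this statement at all: Proposition \ref{prop_rosenthal} is simply quoted as a classical result with a reference to Rosenthal's original article, so any comparison is between your proof and the literature rather than with the paper. What you give is a complete and valid self-contained derivation along the now-standard Hoffmann--J\o rgensen route: the stopping-time argument for the tail bound $P(T>2\lambda+s)\le P(M>s)+P(T>\lambda/2)^2$ is carried out correctly for non-symmetric independent summands (the factor $\lambda/2$ correctly compensating for the absence of L\'evy's inequality), the independence of $\{\tau=k\}$ and $\max_{k\le m\le n}|S_m-S_k|$ is justified, and the integration step is sound: Kolmogorov's maximal inequality fixes the threshold $\mu_0\asymp(\sum_j E(X_j^2))^{1/2}$, the a priori finiteness $E(T^p)\le\norm{\sum_j|X_j|}_{L^p}^p<\infty$ (harmless to assume, since otherwise the right-hand side is infinite) legitimizes absorbing the term $6^p\epsilon_p E(T^p)$, and the bookkeeping with the substitution $\mu\mapsto 6\mu$, the bound $E(M^p)\le\sum_j E(|X_j|^p)$, and the subadditivity of $x\mapsto x^{1/p}$ is accurate. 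Compared with simply citing \cite{rosenthal}, your approach buys a transparent elementary proof with explicit (though far from optimal in their $p$-dependence) constants, at the cost of about a page of argument; since the paper only needs the inequality as a black box, either route is acceptable, but yours would make the exposition self-contained.
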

\begin{prop}
\label{prop_galkoksma}
Let $X_1,X_2,\ldots$ be centered real random variables and
assume that for some $q\geq1$ and some $C>0$, for all $m, n$,
%
%
\begin{equation}
E \Biggl|\sum_{j=m}^{m+n-1} X_j \Biggr|^2 \leq Cn ^q.
\end{equation}
For any $\alpha>0$, the sequence $\sum_{j=1}^N X_j /
N^{q/2+\alpha}$ then tends almost surely to $0$.
\end{prop}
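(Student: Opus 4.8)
\emph{Proof plan.} This is the classical dyadic-blocking (Rademacher--Menshov) argument. Write $S(m,n)=\sum_{j=m}^{m+n-1}X_j$ and $S_N=S(1,N)$, and fix $\alpha>0$; put $\beta\coloneqq q/2+\alpha/2$. It will be enough to prove that, almost surely, $|S_N|\leq 2N^{\beta}$ for all large $N$, since then $|S_N|/N^{q/2+\alpha}\leq 2N^{-\alpha/2}\to 0$. The first step is to handle the geometric subsequence $N=2^k$. By Chebyshev's inequality and the hypothesis, $P(|S_{2^k}|\geq 2^{k\beta})\leq 2^{-2k\beta}E(S_{2^k}^2)\leq C\,2^{k(q-2\beta)}=C\,2^{-k\alpha}$, which is summable in $k$; Borel--Cantelli then gives $|S_{2^k}|\leq 2^{k\beta}$ for all large $k$ almost surely.

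The remaining, and only substantial, point is to bound the fluctuations inside the dyadic blocks, i.e. the quantity $W_k\coloneqq\max_{2^k\leq N<2^{k+1}}|S_N-S_{2^k}|$. For $0\leq j<k$ let $M_j^{(k)}$ be the maximum of $|S(I)|$ over the $2^{k-j}$ dyadic intervals $I$ of length $2^j$ contained in $\{2^k+1,\dots,2^{k+1}\}$, where $S(I)$ denotes the sum of the $X_\ell$ over $\ell\in I$. For each $N$ with $2^k\leq N<2^{k+1}$, writing $N-2^k$ in binary expresses $\{2^k+1,\dots,N\}$ as a disjoint union of at most one dyadic interval of each length $2^0,\dots,2^{k-1}$; hence $|S_N-S_{2^k}|\leq\sum_{j=0}^{k-1}M_j^{(k)}$, and this bound, being independent of $N$, also bounds $W_k$. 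Since $E\bigl((M_j^{(k)})^2\bigr)\leq\sum_I E(S(I)^2)\leq 2^{k-j}\cdot C\,2^{jq}=C\,2^{k}2^{j(q-1)}$, Minkowski's inequality gives $\|W_k\|_{L^2}\leq C\,2^{k/2}\sum_{j=0}^{k-1}2^{j(q-1)/2}\leq C\,k\,2^{kq/2}$, where the factor $k$ is only needed when $q=1$ (for $q>1$ the geometric sum is $O(2^{k(q-1)/2})$). Consequently $P(W_k\geq 2^{k\beta})\leq 2^{-2k\beta}E(W_k^2)\leq C\,k^2 2^{-k\alpha}$, again summable, so by Borel--Cantelli $W_k\leq 2^{k\beta}$ for all large $k$ almost surely.

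Combining the two estimates, almost surely, for all large $k$ and all $2^k\leq N<2^{k+1}$ one has $|S_N|\leq|S_{2^k}|+W_k\leq 2\cdot 2^{k\beta}\leq 2N^{\beta}$, which is exactly what was needed. The hard part is the maximal bound on $W_k$, and inside it the mild logarithmic loss when $q=1$: there the geometric sum degenerates to $k$ terms, giving the $k^2$ factor, which is harmless because the $2^{-2k\beta}$ from Chebyshev beats any power of $k$. If one prefers not to carry out the dyadic decomposition explicitly, one can instead apply the Rademacher--Menshov maximal inequality to each block $\{2^k+1,\dots,2^{k+1}\}$ with weights $c_\ell$ comparable to $\ell^{q-1}$ (which work since $b^q-a^q\geq(b-a)^q$ for $q\geq 1$, so $\sum_{\ell=a+1}^{b}c_\ell$ dominates $C(b-a)^q$ for all $a<b$ in the block), obtaining the same bound with a $(\log N)^2$ factor in place of the explicit $k^2$.
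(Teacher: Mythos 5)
Your proof is correct. Note, however, that the paper does not prove this proposition at all: it is stated as a classical tool and attributed to Philipp and Stout (Theorem A1 of their monograph, the Gal--Koksma strong law), so there is no in-paper argument to compare against. Your dyadic-blocking argument is a clean, self-contained proof of exactly the weak version stated: the Chebyshev--Borel--Cantelli step along $N=2^k$ is standard, and the key point — controlling $W_k=\max_{2^k\leq N<2^{k+1}}|S_N-S_{2^k}|$ — is handled correctly by the binary decomposition of $\{2^k+1,\dots,N\}$ into at most one aligned dyadic interval of each length $2^j$, $j<k$, the bound $E\bigl((M_j^{(k)})^2\bigr)\leq 2^{k-j}\cdot C2^{jq}$, and Minkowski in $L^2$; the resulting $\|W_k\|_{L^2}\leq Ck\,2^{kq/2}$ (the factor $k$ only being needed at $q=1$) loses only a polynomial factor in $k$, which the exponential gain $2^{-k\alpha}$ absorbs. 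This is essentially the Rademacher--Menshov mechanism that also underlies the Gal--Koksma theorem in the cited reference, so your route buys self-containedness at the cost of no extra generality — which is entirely adequate here, since the paper only invokes the proposition with the hypothesis as stated.
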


The following proposition will be used in several forthcoming
constructions.
\begin{prop}
\label{prop_V}
There exists a symmetric random variable $V$ on $\mathbb{R}^d$,
belonging to $L^q$ for any $q>1$, whose characteristic function
is supported in the set $\{|t|\leq\varepsilon_0\}$.
\end{prop}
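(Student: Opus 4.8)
The plan is to construct $V$ explicitly as a random variable whose density is (a normalized version of) the square of the Fourier transform of a smooth, compactly supported function, so that its characteristic function is automatically compactly supported while its density decays fast enough to lie in every $L^q$. First I would pick a nonzero real-valued function $\chi\in C_c^\infty(\R^d)$ supported in the ball $\{|t|\leq \epsilon_0/2\}$; one can take $\chi$ radial and even, so that its Fourier transform $\widehat{\chi}$ is real-valued. Set $h=|\widehat{\chi}|^2 = \widehat{\chi}\cdot\overline{\widehat{\chi}}$. Since $\widehat{\chi}$ is a Schwartz function, $h$ is nonnegative, Schwartz, and not identically zero, so after multiplying by the normalizing constant $1/\int_{\R^d} h$ we obtain a probability density. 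Let $V$ be a random variable with this density. Because $\chi$ is even and real, $h$ is even, so $V$ is symmetric.

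Next I would verify the two required properties. For the moment condition: $V\in L^q$ for every $q>1$ is immediate from the fact that $h$ is Schwartz, hence $|x|^{q}h(x)$ is integrable for every $q$; thus $E(|V|^q)<\infty$ for all $q\geq 1$ (in fact all $q>0$). For the support of the characteristic function: the characteristic function of $V$ is, up to the normalizing constant, $\widehat{h}$ evaluated at the appropriate point, and by the convolution theorem $\widehat{h}=\widehat{\widehat{\chi}\cdot\overline{\widehat{\chi}}}$ is a constant multiple of the convolution $\chi * \check{\chi}$ (or $\chi*\tilde\chi$ with $\tilde\chi(x)=\overline{\chi(-x)}$, which equals $\chi$ here). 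The support of a convolution is contained in the sumset of the supports, so this convolution is supported in $\{|t|\leq \epsilon_0/2\} + \{|t|\leq \epsilon_0/2\} = \{|t|\leq \epsilon_0\}$. Hence the characteristic function of $V$ is supported in $\{|t|\leq\epsilon_0\}$, as required.

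The only points requiring a little care are bookkeeping ones: fixing a consistent normalization convention for the Fourier transform (so that $\widehat{\,|\widehat\chi|^2\,}$ really is a multiple of $\chi*\chi$, with the sign and the reflection handled correctly by the evenness of $\chi$), and checking that $\int h>0$ so that the normalization is legitimate — which holds because $h\geq 0$ and $h\not\equiv 0$ (as $\widehat\chi\not\equiv 0$, $\chi$ being a nonzero $L^1$ function). I do not expect a genuine obstacle here; the construction is the standard trick of obtaining a band-limited probability density with rapidly decaying tails, and the proof is essentially the three lines above made precise.
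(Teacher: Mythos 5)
Your construction is correct and is essentially the paper's own: a density proportional to $|\widehat{\chi}|^2$ for a smooth bump $\chi$ supported in $\{|t|\leq\epsilon_0/2\}$, so that the characteristic function is a multiple of $\chi*\tilde\chi$ and hence supported in $\{|t|\leq\epsilon_0\}$, with all moments finite since the density is Schwartz. The only (harmless) variation is how symmetry is obtained: you build it in by taking $\chi$ real and even, whereas the paper takes an arbitrary bump and symmetrizes at the end by setting $V=W-W'$ with $W,W'$ i.i.d.\ of density $h$.
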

\begin{pf}
We start with a $C^\infty$ function $\phi$ supported in
$\{|t|\leq\varepsilon_0/2\}$ and consider its inverse Fourier
transform $f=\mathcal{F}^{-1}(\phi)$ (which is $C^\infty$ and rapidly
decreasing). Let $g=|f|^2 = \mathcal{F}^{-1}(\phi\star\tilde\phi)$,
where $\tilde\phi(t)=\phi(-t)$. Finally, let $h=g/\int g$. This is
nonnegative, has integral $1$ and its Fourier transform is
proportional to $\phi\star\tilde\phi$, hence it is supported in
$\{|t|\leq\varepsilon_0\}$. If we let $W$ and $W'$ be independent
random variables with density $h$, then $V=W-W'$ satisfies the
conclusion of the proposition.
\end{pf}

\section{$L^p$ bounds}

Our goal in this section is to show the following bound.
\begin{prop}
\label{Lpbound}
Let $(A_0,A_1,\ldots)$ be a centered process, bounded in $L^p$
($p>2$) and satisfying (\ref{Hindep}). For any $\eta>0$, there
exists $C>0$ such that, for all $m,n\geq0$,
%
%
\begin{equation}
\Biggl\| \sum_{\ell=m}^{m+n-1} A_\ell\Biggr\|_{L^{p-\eta}} \leq C n^{1/2}.
\end{equation}
\end{prop}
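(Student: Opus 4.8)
The plan is to establish the $L^{p-\eta}$ bound by a dyadic block-decomposition argument, comparing sums of the $A_\ell$ to sums of independent variables and invoking Rosenthal's inequality. The key point is that \eqref{Hindep} controls how far the joint characteristic function of a few blocks is from the independent one; since we only want second-moment-type behavior in $L^{p-\eta}$ (not $L^p$), we have room to absorb the polynomial factors $(1+\max|b_{j+1}-b_j|)^{C(n+m)}$ coming from that estimate.

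First I would reduce to the one-dimensional case by working component by component. Then, for a fixed large $n$, I would split the interval $\{m,\dots,m+n-1\}$ into roughly $\log n$ blocks whose lengths grow geometrically (say the $j$-th block has length comparable to $2^j$), leaving small gaps of size of order $\log n$ between consecutive blocks; the discarded variables in the gaps contribute at most $C\log n \cdot \norm{A_\ell}_{L^{p-\eta}}$, which is negligible compared to $n^{1/2}$. Let $B_1,\dots,B_K$ (with $K = O(\log n)$) denote the block sums. Using \eqref{Hindep} with $k$ equal to the gap size $\sim c'\log n$, the joint characteristic function of $(B_1,\dots,B_K)$ differs from the product of the individual characteristic functions by at most $C n^{C K} e^{-c k}$; choosing the gap constant $c'$ large enough relative to $C$ makes this error smaller than any negative power of $n$. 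Via the Prokhorov-distance estimate (Lemma~\ref{lemsmoothbasic}) together with a truncation at a polynomial-in-$n$ threshold $T'$ (using the $L^{p-\eta}$ bound on each $B_j$ to control the tails $F(|x_j|\geq T')$), this gives a coupling of $(B_1,\dots,B_K)$ with genuinely independent variables $(\tilde B_1,\dots,\tilde B_K)$ having the same marginals, such that the sums differ by at most a negative power of $n$ except on an event of negligible probability.

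Next I would bound $\norm{\sum \tilde B_j}_{L^{p-\eta}}$ by Rosenthal's inequality (Proposition~\ref{prop_rosenthal}): the variance term is $\sum_j E(\tilde B_j^2) = \sum_j E(B_j^2)$, and here I would need that each single block sum satisfies $E(B_j^2) \leq C \cdot (\text{length of }B_j)$ — this is an a priori second-moment estimate that must itself be bootstrapped, but it can be obtained by the very same argument applied recursively to shorter sums (the block lengths are geometric, so a clean induction on $\log(\text{length})$ works), or directly from \eqref{Hindep} plus boundedness in $L^2$ via a covariance-decay estimate in the spirit of \eqref{wmlkxjv} in the proof of Lemma~\ref{covsommeborne}. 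Granting $E(B_j^2)\leq C|B_j|$, the variance term sums to $O(n)$. The $L^{p-\eta}$-term $(\sum_j E(|\tilde B_j|^{p-\eta}))^{1/(p-\eta)}$ is where the geometric block structure pays off: $E(|\tilde B_j|^{p-\eta}) = E(|B_j|^{p-\eta})$, and if we already know inductively that $\norm{B_j}_{L^{p-\eta'}} \leq C|B_j|^{1/2}$ for a slightly larger $\eta'$, this is at most $C|B_j|^{(p-\eta)/2}$; summing the geometric series of block lengths raised to the power $(p-\eta)/2 > 1$ is dominated by its largest term, which is $O(n^{(p-\eta)/2})$, so this term is also $O(n^{1/2})$. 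Undoing the coupling and adding back the negligible coupling error and the gap contributions yields $\norm{\sum_{\ell=m}^{m+n-1}A_\ell}_{L^{p-\eta}} \leq C n^{1/2}$.

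The main obstacle is the bookkeeping of the induction: one wants to prove the statement for exponent $p-\eta$ while in the Rosenthal step one needs an already-established bound at a marginally worse exponent and at the $L^2$ level for the sub-blocks. I would handle this by fixing a decreasing finite sequence of exponents $p > p-\eta_1 > p-\eta_2 > \dots$ and running a finite induction, or more cleanly by inducting on the (dyadic) scale $\log_2 n$ with the bound $\norm{S}_{L^{p-\eta}} \leq C n^{1/2}$ as the induction hypothesis at all smaller scales and verifying that the constant $C$ does not blow up — this works precisely because the polynomial factors in \eqref{Hindep} are beaten by the exponentially large gap we inserted, so each inductive step costs only a bounded multiplicative factor that is absorbed since the number of scales is $O(\log n)$ and the geometric sums are dominated by their top term. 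The secondary technical nuisance is the truncation threshold $T'$ in Lemma~\ref{lemsmoothbasic}: it must be polynomial in $n$ so that $(C T'^{1/2})^K$ with $K = O(\log n)$ stays sub-polynomial, while still making the tail terms $F(|x_j|\geq T')$ summable — this is exactly where boundedness in $L^p$ with $p>2$ (as opposed to merely $L^2$) is essential.
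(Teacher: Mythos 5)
Your overall strategy (blocks separated by gaps, smoothing with the auxiliary variable $V$, Lemma~\ref{lemsmoothbasic} plus Strassen--Dudley to couple with an independent copy, then Rosenthal and a bootstrap) is the same as the paper's, but your specific block geometry breaks the quantitative bookkeeping in two places. The smaller one: with $K\sim\log_2 n$ blocks of length up to $n$ and gaps of size $c'\log n$, the error from \eqref{Hindep} is $C(1+n)^{CK}e^{-ck}=\exp\bigl(O((\log n)^2)-cc'\log n\bigr)$, which tends to infinity no matter how large you take $c'$ (the decay rate $c$ is fixed by the hypothesis; you must beat $(\log n)^2$ in the exponent, not $\log n$). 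Likewise, with $T'$ polynomial in $n$ the factor $\bigl(C T'^{d/2}\bigr)^K$ in \eqref{smoothing0} is $\exp(O((\log n)^2))$, not ``sub-polynomial'' as you claim. These slips are repairable (take gaps of size $n^\delta$, or $A(\log n)^2$ with $A$ large; their total length is still negligible), and your route to the a priori second-moment input $E(B_j^2)\le C|B_j|$ via exponential covariance decay, as in the proof of \eqref{wmlkxjv}, is legitimate.

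The genuine gap is the induction. Since your largest block has length comparable to $n/2$, the Rosenthal term $\bigl(\sum_j E|B_j|^{p-\eta}\bigr)^{1/(p-\eta)}$ is dominated by that top block and is of size roughly $C(p)\,C_{\mathrm{ind}}\,(n/2)^{1/2}$, i.e.\ of the \emph{same} order $n^{1/2}$ as the target, multiplied by Rosenthal's constant and your induction constant. An induction ``with the same constant at all smaller dyadic scales'' closes only if $C(p)$ times a geometric-series factor is below $\sqrt2$, which you cannot guarantee (Rosenthal's optimal constant grows with the exponent, so this certainly fails for large $p$); otherwise each dyadic scale multiplies the constant by a factor $>1$, and over $\log_2 n$ scales the constant grows polynomially in $n$, destroying the bound. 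Your remark that ``each inductive step costs only a bounded multiplicative factor that is absorbed since the number of scales is $O(\log n)$'' is exactly backwards, and the fallback of invoking the bound at a slightly worse exponent $p-\eta'$ with $\eta'>\eta$ cannot control $E|B_j|^{p-\eta}$, since $p-\eta'<p-\eta$. The paper's proof is designed precisely to avoid this: at a single scale it uses $b\approx n^{\alpha}$ blocks all of length $\approx n^{1-\alpha}$, separated by gaps of length $n^{2\alpha}$ (so the \eqref{Hindep} error $n^{Cn^{\alpha}}e^{-cn^{2\alpha}}$ really is negligible), and Rosenthal then yields $v_n\le Cn^{1/2}+Cv_{n^{1-\alpha}}\,n^{\alpha/(p-\eta)}+v_{Cn^{1-\alpha}}$, where $v_k$ is the maximal $L^{p-\eta}$ norm over blocks of length $k$. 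Because $p-\eta>2$, the dangerous term is of \emph{strictly smaller polynomial order}: if $v_k=O(k^r)$ then $v_n=O(n^{s})$ with $s=\max\bigl(1/2,(1-\alpha)r+\alpha/(p-\eta)\bigr)$, and starting from the trivial $v_n=O(n)$ finitely many iterations (not $\log n$ of them) bring the exponent down to $1/2$ with controlled constants. If you want to keep your scheme, you must replace the geometric blocks by blocks whose maximal length is a polynomially small fraction of $n$, which is essentially the paper's construction.
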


This kind of bound is classical for a large class of weakly
dependent sequences. The main point of this proposition is that
these bounds are established here solely under the assumption
(\ref{Hindep}) on the characteristic function of the process.

For the proof, we will approximate the process
$(A_0,A_1,\ldots)$ by an independent process, using
(\ref{Hindep}). Estimating the $L^{p-\eta}$-norm of this
process via Rosenthal's inequality (Proposition
\ref{prop_rosenthal}), this will yield the desired estimate.
\begin{lem}
\label{lemborneL2}
Let $(A_0,A_1,\ldots)$ be a centered process, bounded in $L^p$
for some $p>2$ and satisfying (\ref{Hindep}). Let
$u_n={\max_{m\in\mathbb{N}}}\| \sum_{\ell=m}^{m+n-1} A_\ell\|_{L^2}^2$.
For any $\alpha>0$, there exists $C>0$ such that $u_{a+b}\leq
u_a+u_b + C(1+a^\alpha+b^\alpha)(1+u_a^{1/2}+u_b^{1/2})$ for
any $a,b\geq1$.
\end{lem}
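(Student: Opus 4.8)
The plan is to estimate the $L^2$ norm of a block $\sum_{\ell=m}^{m+a+b-1}A_\ell$ by splitting it into two sub-blocks $S=\sum_{\ell=m}^{m+a-1}A_\ell$ and $S'=\sum_{\ell=m+a}^{m+a+b-1}A_\ell$ and writing $E((S+S')^2)=E(S^2)+E(S'^2)+2E(SS')$. The first two terms are bounded by $u_a$ and $u_b$ by definition of $u_n$, so everything reduces to controlling the cross term $E(SS')$. Since the two sub-blocks are \emph{adjacent} (no gap), we cannot apply \eqref{Hindep} directly to them; instead I would insert an artificial gap of size $k$ by comparing $S$ against the block $\tilde S'=\sum_{\ell=m+a+k}^{m+a+b+k-1}A_\ell$ shifted $k$ steps to the right, use \eqref{Hindep} to decouple $S$ from $\tilde S'$ up to an error $Ce^{-ck}$ times a polynomial factor in the block sizes, and then bound $|E(S\tilde S')-E(SS')|$ via the $L^2$-continuity of the process. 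Choosing $k$ of order $\alpha\log(a+b)/c$ (or a bit larger) makes the exponential factor $e^{-ck}$ kill the polynomial prefactor $(1+\max|b_{j+1}-b_j|)^{C(n+m)}$; here $n+m$ is bounded (we use at most a handful of blocks), so the prefactor is merely polynomial in $a+b$.

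\textbf{Key steps.} (1) Fix $a,b\geq 1$ and $m\geq 0$, set $S,S'$ as above. Reduce to bounding $|E(SS')|$. (2) For a parameter $k\geq 1$ to be chosen, let $\tilde S'$ be $S'$ shifted by $k$. Estimate $|E(SS')-E(S\tilde S')|$: the difference is a sum of at most $2kb$-many covariance terms $\cov(A_i,A_j)$ each bounded by $\norm{A_i}_{L^2}\norm{A_j}_{L^2}\leq C$, giving a crude bound $Ck(a+b)$ — but this is too lossy. Better: write $\tilde S'-S'=\sum_{\ell=m+a+b}^{m+a+b+k-1}A_\ell-\sum_{\ell=m+a}^{m+a+k-1}A_\ell$ and bound $|E(S(\tilde S'-S'))|\leq \norm S_{L^2}\norm{\tilde S'-S'}_{L^2}\leq u_a^{1/2}\cdot(u_k^{1/2}+u_k^{1/2})$; since we will take $k$ logarithmic, $u_k$ is at most polynomial (in fact we may even bootstrap, but a trivial bound $u_k\leq C k^2$ from $\norm{A_\ell}_{L^2}\leq C$ suffices), so this term is $\leq C u_a^{1/2} k \leq C u_a^{1/2}(1+(a+b)^{\alpha/2})$, absorbable into the claimed bound. (3) Estimate $|E(S\tilde S')|$ using \eqref{Hindep} with $n=1$ block $[m,m+a)$ for the first factor and $m=1$ block $[m+a,m+a+b)$ for the shifted factor (so $n+m=2$), obtaining $|E(S\tilde S')-E(S)E(\tilde S')|\leq$ (a smoothing/truncation argument on the characteristic functions, exactly as in the third step of the proof of Theorem~\ref{thm_I} and in the proof of Lemma~\ref{covsommeborne}, using the variable $V$ of Proposition~\ref{prop_V}) $\leq C(1+a+b)^{C}e^{-ck}T^{O(1)}+CT^{-\rho}$. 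Since the process is centered, $E(S)=E(\tilde S')=0$, so this directly bounds $|E(S\tilde S')|$. Optimizing $T$ and then choosing $k=\lceil (C'/c)\log(2+a+b)\rceil$ makes this $\leq C$. (4) Combine: $u_{a+b}\leq u_a+u_b+2|E(SS')|\leq u_a+u_b+C(1+a^\alpha+b^\alpha)(1+u_a^{1/2}+u_b^{1/2})$, after noting $(a+b)^{\alpha/2}\leq C(a^\alpha+b^\alpha+1)$ and similar elementary rearrangements.

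\textbf{Main obstacle.} The delicate point is step (3): \eqref{Hindep} only controls characteristic functions for Fourier parameters $|t|\leq\epsilon_0$, whereas $E(S\tilde S')$ involves the full second moment. This is precisely why one convolves with the random variable $V$ of Proposition~\ref{prop_V} (whose characteristic function is supported in $\{|t|\leq\epsilon_0\}$), truncates the resulting variables at level $T$, passes to Fourier side where the integrand is supported in the good region, applies \eqref{Hindep}, and pays truncation errors $CT^{-\rho}$ against smoothing errors $CT^{O(1)}e^{-ck}$. The bookkeeping of the polynomial-in-$(a+b)$ prefactor coming from $(1+\max|b_{j+1}-b_j|)^{C(n+m)}$ and its defeat by the single logarithmic choice of $k$ is the crux; since only a bounded number of blocks ($n+m=2$) enters, the prefactor stays polynomial and a logarithmic gap suffices. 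The rest is routine Cauchy–Schwarz and the trivial bound $\norm{A_\ell}_{L^2}\leq C$.
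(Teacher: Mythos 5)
Your proof is correct, but it takes a genuinely different route from the paper. The paper also splits $[m,m+a+b)$ into two blocks, but it creates the gap by \emph{deleting} a sub-block of polynomial length $\lfloor b^\alpha\rfloor$ (whose $L^2$ norm, at most $Cb^\alpha$, is reinserted at the end), and it decouples the two remaining blocks by a coupling argument: the smoothing variables $V_i$ plus Lemma~\ref{lemsmoothbasic} give a bound on the Prokhorov distance between $(\tilde X_1,\tilde X_2)$ and an independent pair, Strassen--Dudley (Theorem~\ref{thm_exists_coupling}) turns this into a coupling, and the $L^2$ norms are then compared through the coupling, using H\"older with the $L^p$ bound on the exponentially small exceptional set, with independence finally giving additivity of the second moments. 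You instead expand the square and estimate the covariance cross-term $E(SS')$ directly: you create the gap by \emph{shifting} the second block by a logarithmic amount $k$, pay the shift via Cauchy--Schwarz as $u_a^{1/2}u_k^{1/2}\leq Cu_a^{1/2}\log(2+a+b)$, and bound the decoupled cross-moment $E(S\tilde S')$ by the Fourier truncation/Plancherel argument that the paper itself uses elsewhere (third step of the proof of Theorem~\ref{thm_I} and the proof of \eqref{wmlkxjv} in Lemma~\ref{covsommeborne}), with no coupling at all. Your route is more elementary in that it bypasses the Prokhorov/Strassen--Dudley machinery, and it even yields a slightly stronger error (logarithmic rather than $a^\alpha+b^\alpha$); the paper's route reuses Lemma~\ref{lemsmoothbasic}, which is needed later anyway, and keeps all constants in the truncation step uniform because only the coupling probabilities, not moments of block sums, enter.

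One bookkeeping point you should fix: in your step (3) the truncation error is not $CT^{-\rho}$ with an absolute constant. In the paper's proof of \eqref{wmlkxjv} the truncated variables are single $A_\ell$'s, with $L^p$ and $L^2$ norms $O(1)$; here $S+V$ and $\tilde S'+V'$ are block sums, so the bound reads $C\,\mathrm{poly}(a+b)\,T^{-\rho}$ (e.g.\ using the trivial bounds $\norm{S+V}_{L^p}\leq C(a+1)$, $\norm{\tilde S'+V'}_{L^2}\leq C(b+1)$). This is harmless: take $T=(a+b)^K$ with $K$ large, and then $k=\lceil C''\log(2+a+b)\rceil$ with $C''$ large enough to beat both $T^{O(1)}$ and the polynomial prefactor from \eqref{Hindep}; the resulting $k$ is still logarithmic, so $u_k^{1/2}\leq Ck\leq C_\alpha(1+a^\alpha+b^\alpha)$ and the claimed inequality follows. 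With this adjustment your argument is complete.
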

\begin{pf}
Let $m\in\mathbb{N}$ and $a\leq b$. Write
\[
X_1=\sum_{\ell=m}^{m+a-1}A_\ell\quad\mbox{and}
\quad
X_2=\sum_{\ell=m+a+\lfloor b^\alpha\rfloor}^{m+a+b-1}
A_\ell.
\]
Also, let $\tilde X_1=X_1+V_1$ and $\tilde
X_2=X_2+V_2$, where $V_1$ and $V_2$ are independent random
variables distributed like $V$ (constructed in Proposition
\ref{prop_V}). Finally, let $\tilde Y_1$ and $\tilde Y_2$ be
independent random variables, distributed like
$\tilde X_1$ and $\tilde X_2$, respectively.

Let us prove that for some $\delta=\delta(\alpha)>0$,
%
%
\begin{equation}
\label{comptilde}
\pi( (\tilde X_1,\tilde X_2), (\tilde Y_1,\tilde Y_2)) < C
e^{-b^{\delta}}.
\end{equation}
Let $\phi$ and $\gamma$ denote, respectively, the
characteristic functions of $(X_1,X_2)$ and $(Y_1,Y_2)$, where
$Y_1$ and $Y_2$ are independent copies of $X_1$ and $X_2$.
Since there is a gap of size $b^\alpha$ between $X_1$ and
$X_2$, (\ref{Hindep}) ensures that for Fourier parameters less than or
equal to $\varepsilon_0$, $|\phi-\gamma|\leq C(1+b)^{C}
e^{-cb^\alpha}\leq C e^{-c' b^{\alpha}}$. We have $\tilde\phi
-\tilde\gamma= (\phi-\gamma) E(e^{i(t_1V_1+t_2V_2)})$. Since
the characteristic function of $V$ is supported in $\{|t|\leq
\varepsilon_0\}$, this shows that the characteristic functions
$\tilde\phi$ and $\tilde\gamma$ of $(\tilde X_1,\tilde X_2)$
and $(\tilde Y_1,\tilde Y_2)$, respectively, satisfy $|\tilde\phi
-\tilde
\gamma|\leq C e^{-c' b^{\alpha}}$ and are supported in
$\{|t|\leq\varepsilon_0\}$. Applying Lemma \ref{lemsmoothbasic}
with $N=2$ and $T'=e^{b^{\alpha/2}}$, we obtain
(\ref{comptilde}) [since the first terms in (\ref{smoothing0})
are bounded by $E( |\tilde X_i|)/T'\leq Cb/e^{b^{\alpha/2}}$,
while the second term is at most $C{T'}^d e^{-c'b^{\alpha}}$].

By (\ref{comptilde}) and Theorem \ref{thm_exists_coupling}, we
can construct a coupling between $(\tilde X_1,\tilde X_2)$ and
$(\tilde Y_1,\tilde Y_2)$ such that, outside a set $O$ of
measure at most $C e^{-b^\delta}$, we have $|\tilde X_i-\tilde
Y_i|\leq C e^{-b^\delta}$. Then $\| \tilde X_1+\tilde X_2 \|_{L^2}$
is bounded by
\[
\| 1_O (\tilde X_1+\tilde X_2) \|_{L^2}+
\| 1_{O^c}(\tilde X_1-\tilde Y_1+\tilde X_2-\tilde Y_2) \|_{L^2}
+\| \tilde Y_1+\tilde Y_2 \|_{L^2}.
\]
The first term is bounded by $\| 1_O \|_{L^q}\| \tilde X_1+\tilde X_2
\|_{L^p}$, where $q$ is chosen so that
$1/p+1/q=1/2$. Hence, it is at most $C e^{-b^\delta/q} b \leq
C$. The second term is bounded by $C e^{-b^\delta} \leq C$.
Finally, since $\tilde Y_1$ and $\tilde Y_2$ are independent
and centered, the last term is equal to $(E(\tilde
Y_1^2)+E(\tilde Y_2^2))^{1/2}$.

Since $\| V \|_{L^2}$ is finite, we finally obtain
\[
\| X_1+X_2 \|^2_{L^2}\leq C + E(Y_1^2)+E( Y_2^2)=C+E(X_1^2)+E(X_2^2).
\]
Taking into account the missing block
$\sum_{\ell=m+a}^{m+a+\lfloor b^\alpha\rfloor-1}A_\ell$ (whose
$L^2$-norm is at most $C b^\alpha$) and using the trivial
inequality $\| U+V \|^2_{L^2}\leq
\| U \|_{L^2}^2+\| V \|_{L^2}^2+2\| U \|_{L^2}\| V \|_{L^2}$,
we finally obtain
\begin{eqnarray*}
\Biggl\| \sum_{\ell=m}^{m+a+b-1} A_\ell\Biggr\|_{L^2}^2
&\leq&\Biggl\| \sum_{\ell=m}^{m+a-1} A_\ell\Biggr\|_{L^2}^2
+\Biggl\| \sum_{\ell=m+a}^{m+a+b-1} A_\ell\Biggr\|_{L^2}^2
\\
&&{}+ C b^{2\alpha} + C b^\alpha\Biggl( \Biggl\| \sum_{\ell=m}^{m+a-1}
A_\ell\Biggr\|_{L^2}
+\Biggl\| \sum_{\ell=m+a}^{m+a+b-1} A_\ell\Biggr\|_{L^2} \Biggr).
\end{eqnarray*}
This proves the lemma.
\end{pf}
\begin{lem}
\label{sousadditif}
Let $u_n\geq0$ satisfy
%
%
\begin{equation}
\label{eqineqzarb}
u_{a+b}\leq u_a+u_b +
C(1+a^\alpha+b^\alpha)(1+u_a^{1/2}+u_b^{1/2})
\end{equation}
for all $a,b\geq1$ and some $C>0$, $\alpha\in(0,1/2)$. Then
$u_n=O(n)$.
\end{lem}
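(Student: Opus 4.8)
The plan is to read the hypothesis as a perturbed form of subadditivity and to control $u_n$ in two stages: first along the dyadic values $n=2^k$, then along all integers by a strong induction that plays off the dyadic bound.

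\emph{Stage 1: dyadic values.} Taking $a=b=n$ in \eqref{eqineqzarb} and using $1+2n^{\alpha}\le 3n^{\alpha}$ and $1+2u_n^{1/2}\le 3(1+u_n)^{1/2}$ (valid since $n\ge1$ and $u_n\ge0$), one gets $u_{2n}\le 2u_n+C'n^{\alpha}(1+u_n)^{1/2}$ with $C'=9C$. Writing $w_n:=1+u_n\ge1$ this becomes $w_{2n}\le 2w_n+C'n^{\alpha}w_n^{1/2}$. Restricting to $n=2^k$ and setting $b_k:=w_{2^k}/2^k>0$, dividing by $2^{k+1}$ turns the recursion into $b_{k+1}\le b_k+c_kb_k^{1/2}$ with $c_k:=\tfrac{C'}{2}2^{k(\alpha-1/2)}$. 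This is the one place where $\alpha<1/2$ is used in an essential way: $\sum_{k\ge0}c_k<\infty$. Since $b_k>0$, the tangent-line bound for the concave function $\sqrt{\cdot}$ gives $b_{k+1}^{1/2}\le b_k^{1/2}+c_k/2$, hence $b_k^{1/2}\le b_0^{1/2}+\tfrac12\sum_j c_j=:\sqrt M$ for all $k$. Therefore $u_{2^k}\le w_{2^k}\le M\,2^k$ for every $k\ge0$, where $M\ge1$ depends only on $u_1$, $C$, $\alpha$.

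\emph{Stage 2: all integers.} I would now prove $u_n\le Kn$ for all $n\ge1$ by strong induction, where $K:=\max(M,K^{*})$ and $K^{*}$ is chosen below. If $n$ is a power of $2$ this follows from Stage 1 and $K\ge M$. Otherwise write $n=2^k+m$ with $2^k\le n<2^{k+1}$, so $1\le m\le 2^k-1$; in particular $m<n/2$ and $2^k<n$, so $u_{2^k}\le M\,2^k$ (Stage 1) and $u_m\le Km$ (induction). Apply \eqref{eqineqzarb} with $a=2^k$, $b=m$; bounding $2^k,m<n$, $(2^k)^{\alpha}+m^{\alpha}\le 2n^{\alpha}$, $u_{2^k}^{1/2}\le(Mn)^{1/2}$ and $u_m^{1/2}\le(Kn)^{1/2}$, the perturbation term is at most $C(1+2n^{\alpha})\bigl(1+(M^{1/2}+K^{1/2})n^{1/2}\bigr)\le C_4(1+K^{1/2})n^{\alpha+1/2}$ for a constant $C_4=C_4(C,M)$ (using $n\ge1$). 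Since $\alpha+1/2\le1$ we have $n^{\alpha+1/2}\le n$, and together with $u_{2^k}\le Mn$, $u_m<\tfrac12Kn$ this gives $u_n\le\bigl(M+\tfrac12K+C_4+C_4K^{1/2}\bigr)n$, which is $\le Kn$ as soon as $\tfrac12K-C_4K^{1/2}-(M+C_4)\ge0$, i.e. for all $K\ge K^{*}:=\bigl(C_4+\sqrt{C_4^{2}+2(M+C_4)}\bigr)^{2}$. This closes the induction, so $u_n\le Kn$ for all $n$, i.e. $u_n=O(n)$.

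The step I expect to be the genuine obstacle is Stage 1, and specifically the failure of a naive induction "$u_n\le Kn\Rightarrow u_{2n}\le 2Kn$": the error term in $u_{2n}\le 2u_n+(\text{error})$ is strictly positive, so one never recovers the same constant. Passing to the rescaled sequence $b_k=(1+u_{2^k})/2^k$ and summing the increments $c_k$ — which is exactly where $\alpha<1/2$ enters, via $\sum2^{k(\alpha-1/2)}<\infty$ — is what circumvents this. The other mild point is that Stage 2 needs the \emph{unbalanced} split $n=2^k+m$ with $m<n/2$: the slack $Km<\tfrac12Kn$ is precisely what leaves room to absorb both the $Mn$ produced by $u_{2^k}$ and the $o(n)$ perturbation, letting the induction close on a single constant $K$.
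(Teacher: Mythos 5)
Your proof is correct. Stage 1 is sound: with $a=b=2^k$, passing to $w_n=1+u_n$ and $b_k=w_{2^k}/2^k$ indeed gives $b_{k+1}\le b_k+c_kb_k^{1/2}$ with $c_k=\tfrac{C'}{2}2^{k(\alpha-1/2)}$, the bound $b_{k+1}^{1/2}\le b_k^{1/2}+c_k/2$ follows by squaring, and summability of $c_k$ (this is where $\alpha<1/2$ enters) yields $u_{2^k}\le M2^k$. Stage 2 also checks out: the unbalanced split $n=2^k+m$ with $m<n/2$ gives $u_n\le Mn+\tfrac12Kn+C_4(1+K^{1/2})n^{\alpha+1/2}$, and since $\alpha+1/2<1$ the quadratic inequality in $K^{1/2}$ closes the strong induction for $K$ large.

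Your route differs from the paper's. The paper first absorbs the cross term via $xy\le\epsilon x^2+\epsilon^{-1}y^2$, obtaining $u_{a+b}\le(1+\epsilon)(u_a+u_b)+C\epsilon^{-1}\max(a^{2\alpha},b^{2\alpha})$, then controls $v_k=\max_{0\le n<2^{k+1}}u_n$ over whole dyadic blocks to get $u_n=O(n^\rho)$ for every $\rho>1$, and finally bootstraps: re-injecting this into the original inequality turns the perturbation into $Ca^\beta+Cb^\beta$ with $\beta<1$, whence $v_k/2^k$ is bounded. Your argument instead gets the sharp linear bound on dyadic points in a single pass, by rescaling and telescoping $\sqrt{b_k}$ with summable increments, and then must separately handle non-dyadic $n$, which you do by a binary-split strong induction. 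The paper's use of $v_k$ (a max over a dyadic range) dispenses with that second step and with tracking constants, at the cost of the two-pass bootstrap; your version is more explicit about constants and makes the role of $\alpha<1/2$ visible in one place, at the cost of the extra induction. Both are elementary and both exploit doubling in an essential way.
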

\begin{pf}
For any $\varepsilon>0$ and $x,y\geq0$, we have $xy\leq
\varepsilon x^2+\varepsilon^{-1}y^2$. From the
assumption, we therefore obtain
\[
u_{a+b} \leq(1+\varepsilon) (u_a+u_b) + C\varepsilon^{-1} \max
(a^{2\alpha
}, b^{2\alpha}).
\]
Let $v_k=\max_{0 \leq n < 2^{k+1}} u_n$. It follows from the
previous equation that
\[
v_{k+1} \leq(2+2\varepsilon) v_k + C\varepsilon^{-1} 2^{2\alpha k}.
\]
In particular,
we have
\[
\frac{v_{k+1}}{(2+2\varepsilon)^{k+1}} \leq\frac{v_k}{(2+2\varepsilon)^k}
+ C\varepsilon^{-1} \frac{2^{2\alpha k}}{(2+2\varepsilon)^{k+1}}.
\]
It follows inductively that $v_k/(2+2\varepsilon)^k \leq v_0+
C\varepsilon^{-1}\sum_j \frac{2^{2\alpha
j}}{(2+2\varepsilon)^{j+1}}<\infty$. Hence, for any $\varepsilon>0$,
$v_k=O ((2+2\varepsilon)^k)$, that is, for any $\rho>1$,
$u_n=O(n^\rho)$. Choosing $\rho$ close enough to $1$, we get,
from (\ref{eqineqzarb}), that $u_{a+b} \leq u_a+u_b + C
a^\beta+Cb^\beta$ for some $\beta<1$. Therefore, $v_{k+1} \leq
2v_k+ C 2^{\beta k}$. As above, we deduce that $v_k/2^k$ is
bounded, that is, $u_n=O(n)$.
\end{pf}
\begin{pf*}{Proof of Proposition \ref{Lpbound}}
Lemmas \ref{lemborneL2} and \ref{sousadditif} show that a
centered process in $L^p$ satisfying (\ref{Hindep}) satisfies
the following bound in $L^2$:
%
%
\begin{equation}
\label{borneL2}
\Biggl\| \sum_{\ell=m}^{m+n-1}A_\ell\Biggr\|_{L^2} \leq Cn^{1/2}.
\end{equation}
Let us now show that the same bound holds in $L^{p-\eta}$ for
any $\eta>0$.

Let $\alpha=1/10$. For $n\in\mathbb{N}$, let $a=\lfloor
n^{1-\alpha}\rfloor$ and $b=\lfloor n^\alpha\rfloor$. Fixing $m\in
\mathbb{N}$, we decompose the interval $[m,m+n)$ as the union of the
intervals $I_j=[m+j a, m+(j+1)a - b^2)$ and
$I'_j=[m+(j+1)a-b^2, m+(j+1)a)$ for $0\leq j< b$, and a final
interval $J=[m+ba, m+n)$.

Write $X_j=\sum_{\ell\in I_j} A_\ell$ and $\tilde
X_j=X_j+V_j$, where the $V_j$ are independent and distributed
like $V$ constructed in Proposition \ref{prop_V}. Finally, let
$\tilde Y_0,\ldots,\tilde Y_{b-1}$ be independent random
variables such that $\tilde Y_j$ is distributed like $\tilde
X_j$. We claim, for some $\delta>0$ and any $j\leq b$, that
%
%
\begin{equation}
\label{powixv}
\pi((\tilde X_0,\ldots, \tilde X_{j-1}), (\tilde X_0,\ldots, \tilde
X_{j-2}, \tilde Y_{j-1}))
< C e^{-n^{\delta}}.
\end{equation}
Indeed, the $\tilde X_j$ are blocks, each of length at most $n$,
and there are at most $n^{\alpha}$ blocks. Since there is a gap
of length $b^2=n^{2\alpha}$ between $X_{j-2}$ and $X_{j-1}$,
(\ref{Hindep})~shows that the difference between the
characteristic functions of the members of (\ref{powixv}) is at
most $C n^{C n^\alpha} \cdot e^{-c n^{2\alpha}}\leq C
e^{-c'n^{2\alpha}}$ (the terms $V_j$ ensure that it is
sufficient to consider Fourier parameters bounded by
$\varepsilon_0$). The estimate (\ref{powixv}) then follows from
Lemma~\ref{lemsmoothbasic} by taking $T'=e^{n^{\alpha}}$ and
$N=j$.

Summing the estimate in (\ref{powixv}) over $j$, we obtain
%
%
\begin{equation}
\pi( (\tilde X_0,\ldots, \tilde X_{b-1}), (\tilde Y_0,\ldots, \tilde
Y_{b-1}))
< C e^{- n^\delta/2}.
\end{equation}
By the Strassen--Dudley Theorem \ref{thm_exists_coupling}, we can
therefore construct a coupling between those processes such
that, outside a set $O$ of measure at most $C
e^{-n^\delta/2}$, we have $|\tilde X_i-\tilde Y_i|\leq C
e^{-n^\delta/2}$ for $0\leq i \leq b-1$. As in the proof of
Lemma \ref{lemborneL2}, this gives
\[
\Biggl\| \sum_{j=0}^{b-1} \tilde X_j \Biggr\|_{L^{p-\eta}} \leq C + \Biggl\| \sum
_{j=0}^{b-1} \tilde Y_j \Biggr\|_{L^{p-\eta}}.
\]
Since the $\tilde Y_j$ are independent and centered,
Rosenthal's inequality (Proposition~\ref{prop_rosenthal})
applies. Let us write
$v_k={\max_{t\in\mathbb{N}}}\| \sum_{\ell=t}^{t+k-1} A_\ell\|
_{L^{p-\eta}}$. The $\tilde Y_j$ are bounded in $L^2$ by
$a^{1/2}$ [by (\ref{borneL2})] and in $L^{p-\eta}$ by
$C+v_{a-b^2}\leq Cv_{a-b^2}$. Hence,
\begin{eqnarray*}
\Biggl\| \sum_{j=0}^{b-1} \tilde Y_j \Biggr\|_{L^{p-\eta}}
&\leq& C \Biggl(\sum_{j=0}^{b-1} a \Biggr)^{1/2} + C \Biggl(\sum
_{j=0}^{b-1} v_{a-b^2}^{p-\eta} \Biggr)^{1/(p-\eta)}
\\
&\leq& C n^{1/2} + C v_{a-b^2} b^{1/(p-\eta)}.
\end{eqnarray*}
Since $\tilde X_j=X_j+V_j$ and $V_j$ is bounded by $C$ in
$L^{p-\eta}$, we get, from the two previous equations,
that
\[
\Biggl\| \sum_{j=0}^{b-1}X_j \Biggr\|_{L^{p-\eta}} \leq C n^{1/2} + C v_{a-b^2}
b^{1/(p-\eta)}.
\]
Finally,
\begin{eqnarray*}
\Biggl\| \sum_{\ell=m}^{m+n-1} A_\ell\Biggr\|_{L^{p-\eta}} &\leq& \Biggl\| \sum
_{j=0}^{b-1}X_j \Biggr\|_{L^{p-\eta}}+
\sum_{j=0}^{b-1} \sum_{\ell\in I'_j} \| A_\ell\|_{L^{p-\eta}}
+ \Biggl\| \sum_{\ell=m+ab}^{m+n-1} A_\ell\Biggr\|_{L^{p-\eta}}
\\
&\leq& C n^{1/2} + C v_{a-b^2} b^{1/(p-\eta)} + C n^{3\alpha} + v_{n-ab}.
\end{eqnarray*}
Therefore, since $3\alpha<1/2$,
we have
\[
v_n \leq Cn^{1/2} + C v_{a-b^2} b^{1/(p-\eta)} + v_{n-ab}.
\]
Moreover, $a\leq n^{1-\alpha}$, $b\leq n^\alpha$ and $n-ab \leq
a+b+1 \leq C n^{1-\alpha}$. If $v_n=O(n^r)$, then this gives
$v_n=O(n^s)$ for $s=s(r)=\max(1/2,
(1-\alpha)r+\alpha/(p-\eta))$. Starting from the trivial
estimate $v_n=O(n)$, we get $v_n=O(n^{s(1)})$, then
$v_n=O(n^{s(s(1))})$ and so on. Since $p-\eta>2$, this gives, in
finitely many steps, that $v_n=O(n^{1/2})$.
\end{pf*}

\section{Proof of the main theorem for nondegenerate covariance matrices}
\label{secnondeg}

In this section, we consider a process $(A_0,A_1,\ldots)$
satisfying the assumptions of Theorem \ref{main_thm} and such
that the matrix $\Sigma^2$ is nondegenerate. We will prove that
this process satisfies the conclusions of Theorem
\ref{main_thm}. Replacing, without loss of generality, $A_\ell$
by $A_\ell-E(A_\ell)$, we can assume that $A_\ell$ is centered.
If $K$ is a finite subset of $\mathbb{N}$, then we denote its
cardinality by
$|K|$.

The strategy of the proof is very classical: we subdivide the
integers into blocks with gaps between them, make the sums over
the different blocks independent using the gaps and
(\ref{Hindep}), use approximation results for sums of
independent random variables to handle the (now independent) sums
over the different blocks and, finally, show that the
fluctuations inside the blocks and the terms in the gaps do not
contribute much to the asymptotics.

The interesting feature of our approach is the choice of the
blocks. First, we subdivide $\mathbb{N}$ into the intervals
$[2^n,2^{n+1})$ and we then cut each of these intervals
following a triadic Cantor-like approach: we put a relatively
large gap in the middle, then we put slightly smaller gaps in
the middle of each half and we continue on in this way. This
procedure gives better results than the classical arguments
taking blocks along a polynomial progression: this would give
an error $p/(3p-2)$ in the theorem,
%
%
while we obtain the better error term $p/(4p-4)$ with the
Cantor-like decomposition. The reason is that, to create $n$
manageable blocks, the classical arguments require gaps whose
union is of order $n^2$, while the triadic decomposition only
uses gaps whose union is of order $n$.

We will now describe the triadic procedure more precisely. Fix
$\beta\in(0,1)$ and $\varepsilon\in(0, 1-\beta)$. Let
$f=f(n)=\lfloor\beta n\rfloor$. We decompose $[2^n, 2^{n+1})$
as a union of $F=2^f$ intervals $(I_{n,j})_{0\leq j < F}$ of
the same length, and $F$ gaps $(J_{n,j})_{0\leq j< F}$ between
them, used to ensure sufficient independence. Good intervals and gaps
are placed alternatively, and in increasing order, as follows:
$[2^n, 2^{n+1}) = J_{n,0} \cup I_{n,0} \cup J_{n,1} \cup
I_{n,1} \cdots\cup J_{n,F-1} \cup I_{n,F-1}$.

The lengths of the gaps $J_{n,j}$ are chosen as follows. The
middle interval $J_{n,F/2}$ has length $2^{\lfloor\varepsilon n
\rfloor} 2^{f-1}$. It cuts the interval $[2^n,2^{n+1})$ into
two parts. The middle intervals of each of these parts, that is,
$J_{n,F/4}$ and $J_{n,3 F/4}$, have length $2^{\lfloor\varepsilon
n \rfloor} 2^{f-2}$. The middle intervals of the remaining four
parts have length $2^{\lfloor\varepsilon n\rfloor} 2^{f-3}$, and
so on. More formally, for $1\leq j< F$, we write
$j=\sum_{k=0}^{f-1} \alpha_k(j) 2^k$, where $\alpha_k(j)\in
\{0,1\}$, and consider the smallest number $r$ with
$\alpha_r(j)\not=0$. The length of $J_{n,j}$ is then
$2^{\lfloor\varepsilon n \rfloor} 2^{r}$. We say that this
interval is of rank $r$. This defines the length of all the
intervals $J_{n,j}$, except for $j=0$. We let
$|J_{n,0}|=2^{\lfloor\varepsilon n \rfloor} 2^f$ and say that
this interval has rank $f$.

Since there are $2^{f-1-r}$ intervals of rank $r$ for $r<f$,
with length $2^{\lfloor\varepsilon n \rfloor} 2^{r}$, the lengths
of the intervals $(J_{n,j})_{0\leq j < F}$ add up to
%
%
\begin{equation}
\label{sommeinterv}
|J_{n,0}|+\sum_{r=0}^{f-1} 2^{\lfloor\varepsilon n \rfloor} 2^{r}\cdot2^{f-1-r}
=2^{\lfloor\varepsilon n \rfloor} 2^{f-1} (f+2).
\end{equation}
Let $|I_{n,j}|=2^{n-f} - (f+2)2^{ \lfloor\varepsilon n \rfloor
-1}$. This is a positive integer if $n$ is large enough and
${\sum}|I_{n,j}|+{\sum}|J_{n,j}|=2^n$, that is, those intervals
exactly fill $[2^n,2^{n+1})$. We will denote by $i_{n,j}$ the
smallest element of $I_{n,j}$.

We will use the lexicographical order $\prec$ on the set
$\{(n,j) \mid n\in\mathbb{N}, 0\leq j< F(n)\}$. It can also be
described as follows: $(n,j)\prec(n',j')$ if the interval
$I_{n,j}$ is to the left of $I_{n',j'}$. A sequence $(n_k,j_k)$
tends to infinity for this order if and only if $n_k\to
\infty$.

Let\vspace*{2pt} $X_{n,j}=\sum_{\ell\in I_{n,j}} A_\ell$ for $n\in\mathbb{N}$ and
$0\leq j < F(n)$. Finally, write $\mathcal{I}=\bigcup_{n,j}I_{n,j}$ and
$\mathcal{J}=\bigcup_{n,j} J_{n,j}$. The main steps of the proof are
the following:
\begin{enumerate}
\item there exists a coupling between $(X_{n,j})$ and a
sequence of independent random variables $(Y_{n,j})$, with
$Y_{n,j}$ distributed like $X_{n,j}$, such that almost
surely when $(n,j)\to\infty$,
\[
\biggl| \sum_{(n',j')\prec(n,j)} X_{n',j'} - Y_{n',j'} \biggr|
=o\bigl( 2^{(\beta+\varepsilon) n/2}\bigr);
\]
\item there exists a coupling between $(Y_{n,j})$ and a
sequence of independent Gaussian random variables
$Z_{n,j}$, with $\operatorname{cov}(Z_{n,j})=|I_{n,j}| \Sigma^2$, such
that almost surely when $(n,j)\to\infty$,
\[
\biggl| \sum_{(n',j')\prec(n,j)} Y_{n',j'} -Z_{n',j'} \biggr|
=o\bigl( 2^{(\beta+\varepsilon) n/2}+2^{((1-\beta)/2+\beta/p+\varepsilon)n}\bigr);
\]
\item coupling the $X_{n,j}$ with the $Z_{n,j}$, by virtue of the
first two steps, and writing $Z_{n,j}$ as the sum of
$|I_{n,j}|$ Gaussian random variables $\mathcal{N}(0,\Sigma^2)$,
we obtain (using Lemma \ref{lem_jointwocouplings} and the
example that follows it) a coupling between
$(A_\ell)_{\ell\in\mathcal{I}}$ and $(B_\ell)_{\ell\in\mathcal{I}}$,
where the $B_\ell$ are i.i.d. and distributed like
$\mathcal{N}(0,\Sigma^2)$, such that, when $(n,j)$ tends to
infinity,
we have
\[
\biggl|\sum_{\ell<i_{n,j}, \ell\in\mathcal{I}} A_\ell-B_\ell
\biggr|
=o\bigl(2^{(\beta+\varepsilon)n/2}+2^{((1-\beta)/2+\beta/p+\varepsilon)n}\bigr);
\]
\item we check that almost surely when $(n,j)\to\infty$,
\[
\max_{m<|I_{n,j}|} \Biggl|\sum_{\ell=i_{n,j}}^{i_{n,j}+m} A_\ell
\Biggr| = o\bigl(2^{((1-\beta)/2+\beta/p+\varepsilon)n}\bigr)
\]
and, moreover, that a similar estimate holds for the $B_\ell$'s;
\item combining the last two steps, we get that when $k$ tends to infinity,
\[
\biggl|\sum_{\ell<k, \ell\in\mathcal{I}} A_\ell-B_\ell\biggr|
=o\bigl(k^{(\beta+\varepsilon)/2}+k^{(1-\beta)/2+\beta/p+\varepsilon}\bigr);
\]
\item finally, we prove that the gaps can be neglected, that is, almost
surely
%
%
\begin{equation}
\label{step6}
\sum_{\ell<k, \ell\in\mathcal{J}}A_\ell=o(k^{\beta/2+\varepsilon})
\end{equation}
and a similar estimate holds for the $B_\ell$'s.
\end{enumerate}
Altogether, this gives a coupling for which almost surely
\[
\biggl|\sum_{\ell<k} A_\ell-B_\ell\biggr|
=o\bigl(k^{\beta/2+\varepsilon}+k^{(1-\beta)/2+\beta/p+\varepsilon}\bigr).
\]
Let us choose $\beta$ such that the two error terms are equal,
that is, $\beta=p/(2p-2)$. We obtain an almost sure invariance
principle with error term $p/(4p-4)+\varepsilon$ for any
$\varepsilon>0$. Since the almost sure invariance principle
implies the central limit theorem, this proves Theorem
\ref{main_thm}, under the assumption that $\Sigma^2$ is
nondegenerate.

It remains to justify Steps 1, 2, 4 and 6 since
Steps 3 and 5 are trivial. This is done in the following
subsections.

\subsection{Step 1: Coupling with independent random variables}

In this subsection, we justify the first step of the proof of
Theorem \ref{main_thm} with the following proposition.
\begin{prop}
\label{prop_make_all_indep}
There exists a coupling between $(X_{n,j})$ and $(Y_{n,j})$
such that, almost surely, when $(n,j)$ tends to infinity,
\[
\biggl| \sum_{(n',j')\prec(n,j)} X_{n',j'} - Y_{n',j'} \biggr|
=o\bigl( 2^{(\beta+\varepsilon) n/2}\bigr).
\]
\end{prop}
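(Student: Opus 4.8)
The plan is to build the coupling block by block along the lexicographic order $\prec$, using the Berkes--Philipp lemma (Lemma~\ref{lem_jointwocouplings}) to glue successive couplings, and then to estimate the accumulated error by a Borel--Cantelli argument. First I would fix a level $n$ and consider the blocks $I_{n,j}$, $0\le j<F=2^f$, laid out from left to right with the gaps $J_{n,j}$ of rank $r$ having length $2^{\lfloor\epsilon n\rfloor}2^r$ between them. The key input is assumption \eqref{Hindep}: when one replaces, one at a time, each $X_{n,j}$ (regularized to $\tilde X_{n,j}=X_{n,j}+V_{n,j}$ with $V_{n,j}$ an independent copy of the variable $V$ from Proposition~\ref{prop_V}) by an independent copy, the gap preceding that block controls the error between the joint characteristic functions. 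Concretely, the gap $J_{n,j}$ of rank $r$ lies between the aggregate of all blocks to its left within level $n$ and all blocks to its right; cutting there, \eqref{Hindep} gives an error at most $C(1+2^n)^{C\cdot 2^f}e^{-c\,2^{\lfloor\epsilon n\rfloor}2^r}$, which, since $2^f=2^{\lfloor\myrho n\rfloor}$ is only sub-exponential in $2^n$ for $\myrho<1$ and the exponential gain $e^{-c2^{\lfloor\epsilon n\rfloor}2^r}$ dominates, is bounded by $Ce^{-c'2^{\epsilon n}2^r}$. The $V$'s ensure the relevant characteristic functions are supported in $\{|t|\le\epsilon_0\}$, so Lemma~\ref{lemsmoothbasic} applies with $N$ the number of blocks and $T'=e^{2^{\epsilon n}2^{r}/2}$ (say), yielding a Prokhorov bound $\prok<Ce^{-2^{\delta n}2^r}$ for each single-block swap, after absorbing the crude polynomial terms $E|\tilde X_{n,j}|/T'$ using the $L^p$-bound $\norm{X_{n,j}}_{L^2}\le C2^{(n-f)/2}\le C2^{n/2}$ from Proposition~\ref{Lpbound}.

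Next I would sum these single-swap errors. Swapping all $2^f$ blocks of level $n$ in turn and summing over $j$: the number of rank-$r$ gaps is $2^{f-1-r}$, so the total Prokhorov cost at level $n$ is at most $\sum_{r}2^{f-r}\cdot Ce^{-2^{\delta n}2^r}$, and since $f=\lfloor\myrho n\rfloor$ this is $\le C\,2^{\myrho n}e^{-2^{\delta n}}$, still super-exponentially small. Summing over $n\in\N$ then gives a total finite bound; applying the Strassen--Dudley theorem (Theorem~\ref{thm_exists_coupling}) block-by-block and chaining with Lemma~\ref{lem_jointwocouplings} produces a single coupling between $(X_{n,j})$ and an independent family $(Y_{n,j})$, with $Y_{n,j}\stackrel{d}{=}X_{n,j}$, on which there is a set $O_{n,j}$ of probability $\le Ce^{-2^{\delta n}2^{r(j)}}$ outside of which $|\tilde X_{n,j}-\tilde Y_{n,j}|\le Ce^{-2^{\delta n}2^{r(j)}}$, hence (subtracting the matched $V_{n,j}$'s, which we arrange to be equal in the coupling) $|X_{n,j}-Y_{n,j}|$ is also controlled off $O_{n,j}$. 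By Borel--Cantelli, since $\sum_{n,j}P(O_{n,j})<\infty$, almost surely only finitely many $O_{n,j}$ occur; for all large $(n,j)$ we therefore have $|X_{n,j}-Y_{n,j}|\le Ce^{-2^{\delta n}}$.

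To finish, I would bound the partial-sum error $\bigl|\sum_{(n',j')\prec(n,j)}X_{n',j'}-Y_{n',j'}\bigr|$. Split it as the contribution of the finitely many ``bad'' indices where $O_{n',j'}$ occurred — a fixed random constant, hence $o(1)$ relative to any growing bound — plus the contribution of the ``good'' indices. On the good indices the summand is at most $Ce^{-2^{\delta n'}}$ for indices at level $n'$, and there are $2^{f(n')}=2^{\lfloor\myrho n'\rfloor}$ of them at level $n'$, so the good contribution up to level $n$ is $\le C\sum_{n'\le n}2^{\myrho n'}e^{-2^{\delta n'}}\le C'<\infty$. Thus the whole error is $O(1)$ almost surely, which is certainly $o(2^{(\myrho+\epsilon)n/2})$ since $\myrho+\epsilon>0$, giving the proposition. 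The main obstacle is the bookkeeping in the single-block swap: one must verify that \eqref{Hindep} genuinely applies to the configuration ``all blocks to the left of $J_{n,j}$ as one group of blocks, all blocks to the right as another'', with the right count of sub-blocks and the right gap length, and that the regularizing variables $V_{n,j}$ can be introduced and then cancelled consistently across the chained couplings — the error estimates themselves are comfortably dominated once the super-exponential gain $e^{-c2^{\epsilon n}2^r}$ is secured, because the block $L^2$-size is only polynomial in $2^n$ by Proposition~\ref{Lpbound} while the number of blocks $2^{\myrho n}$ is sub-exponential.
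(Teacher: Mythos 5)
Your plan breaks down at the key quantitative step, the single-block swap. When you cut at the gap $J_{n,j}$ while keeping all the blocks of level $n$ as separate blocks in \eqref{Hindep} (which you must, since the coupling requires closeness of the full joint law, hence one Fourier parameter per block), the prefactor is $(1+2^n)^{C\cdot 2^f}=e^{C'n2^{\myrho n}}$, and the gain from the gap is only $e^{-c\,2^{\lfloor\epsilon n\rfloor}2^r}$. Since $\epsilon<1-\myrho<\myrho$ (recall $\myrho$ is ultimately $p/(2p-2)>1/2$), for the many low-rank gaps (half of them have $r=0$, length $2^{\lfloor\epsilon n\rfloor}$) the exponent $2^{\epsilon n}2^r$ is far smaller than $n2^{\myrho n}$, so the claimed bound $Ce^{-c'2^{\epsilon n}2^r}$ per swap is simply false; your subsequent summation over $r$ and $n$ rests on it. The same mismatch appears in your use of Lemma~\ref{lemsmoothbasic}: with $N\sim 2^{\myrho n}$ blocks the factor $(C(d)T'^{d/2})^N$ for your choice of $T'$ is $e^{\sim 2^{\myrho n}2^{\epsilon n}2^r}$, which swamps the characteristic-function closeness. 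The whole point of the Cantor-like gap structure is to avoid exactly this: the paper first decouples entire levels using the single big gap $J_{n,0}$ of length $\sim 2^{(\myrho+\epsilon)n}$ (Lemma~\ref{lem1kqljsdf}, where the gain $e^{-c2^{(\myrho+\epsilon)n}}$ does beat $e^{Cn2^{\myrho n}}$), and then splits each level hierarchically (Lemma~\ref{couple_local}): at stage $i$ one cuts groups of $2^i$ blocks at their central gap of length $2^{\lfloor\epsilon n\rfloor+i}$, so that in every application of \eqref{Hindep} the gap length is $2^{\epsilon n}$ times the number of blocks being decoupled, and $e^{-c2^{\epsilon n+i}}$ dominates $(1+2^{n-f})^{C2^i}$. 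Your left-to-right, one-block-at-a-time scheme destroys this proportionality and cannot be repaired without reorganizing it into the dyadic scheme.

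The second gap is the treatment of the smoothing variables. The Strassen--Dudley coupling (Theorem~\ref{thm_exists_coupling}) is between the laws of the smoothed vectors $(\tilde X_{n,j})$ and $(\tilde Y_{n,j})$; it fixes only their joint distribution, and the copies of $V$ appearing on the two sides are distinct random variables whose joint behaviour you do not control. ``Arranging them to be equal'' is an extra constraint that is in general incompatible with the prescribed marginals, so the $V$'s do not cancel, and your conclusion that the total error is a.s.\ $O(1)$ is unjustified (it is also strictly stronger than the statement, which should be a warning sign). The paper instead keeps two independent families of smoothing variables and bounds their accumulated contribution by the law of the iterated logarithm, giving $o\bigl(\Card\{(n',j')\prec(n,j)\}^{1/2+\alpha}\bigr)=o(2^{\myrho n(1/2+\alpha)})$; this is precisely where the error term $o(2^{(\myrho+\epsilon)n/2})$ in Proposition~\ref{prop_make_all_indep} comes from. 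Your Borel--Cantelli step for the good/bad indices is fine in spirit (and matches \eqref{eq_avec_tildes} in the paper), but it only controls the tilde variables, not $X_{n,j}-Y_{n,j}$ themselves.
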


The rest of this subsection is devoted to the proof of this
proposition.

Let $V_{n,j}$, for $n,j\in\mathbb{N}$, be independent copies of $V$
(constructed in Proposition~\ref{prop_V}), independent of
everything else (we may need to enlarge the space to ensure
their existence). Let $\tilde X_{n,j}=X_{n,j}+V_{n,j}$.

We will write $X_n=(X_{n,j})_{0\leq j< F(n)}$ and $\tilde
X_n=(\tilde X_{n,j})_{0\leq j< F(n)}$. The proof of
Proposition~\ref{prop_make_all_indep} has two parts: first, we make the
different $\tilde X_n$ independent of each other, using the
gaps $J_{n,0}$; then, inside each block $\tilde X_n$, we make
the variables $\tilde X_{n,j}$ independent by using the smaller
gaps $J_{n,j}$.
\begin{lem}
\label{lem1kqljsdf}
Let $\tilde Q_n$ be a random variable distributed like $\tilde
X_n$, but independent of $(\tilde X_1,\ldots,\tilde X_{n-1})$. We
have
%
%
\begin{equation}
\label{eq_firststep}
\pi((\tilde X_1,\ldots, \tilde X_{n-1},\tilde X_n),(\tilde X_1,\ldots
, \tilde X_{n-1},\tilde Q_n))
< C 4^{-n}.
\end{equation}
\end{lem}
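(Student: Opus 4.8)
The plan is to bound the Prokhorov distance in \eqref{eq_firststep} through characteristic functions, using the smoothing inequality \eqref{smoothing0} of Lemma~\ref{lemsmoothbasic}, and to get the required decay from the gap $J_{n,0}$ via \eqref{Hindep}. We may assume $n\ge 2$, the case $n=1$ being trivial since $\tilde X_1$ and $\tilde Q_1$ have the same law. Let $\phi$ and $\gamma$ be the characteristic functions of $(\tilde X_1,\dots,\tilde X_{n-1},\tilde X_n)$ and of $(\tilde X_1,\dots,\tilde X_{n-1},\tilde Q_n)$. Since $\tilde Q_n$ has the law of $\tilde X_n$ and is independent of $(\tilde X_1,\dots,\tilde X_{n-1})$, the function $\gamma$ is the product of the characteristic functions of $(\tilde X_1,\dots,\tilde X_{n-1})$ and of $\tilde X_n$. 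As the $V_{k,j}$ are independent of everything and of each other, the scalar factor carried by each $V_{k,j}$ occurs identically in $\phi$, in $\gamma$ and in these two factors; cancelling them, and using that they have modulus $\le 1$, one gets pointwise $|\phi-\gamma|\le|\Phi-\Phi_1\Phi_n|$, where $\Phi$, $\Phi_1$, $\Phi_n$ are the characteristic functions of $(X_1,\dots,X_n)$, of $(X_1,\dots,X_{n-1})$ and of $X_n$. Moreover the characteristic function of $V$ (Proposition~\ref{prop_V}) is supported in $\{|t|\le\epsilon_0\}$, so $\phi$ and $\gamma$, hence $\phi-\gamma$, vanish as soon as some Fourier variable has modulus $>\epsilon_0$.

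On the box where all Fourier variables have modulus $\le\epsilon_0$, I would apply \eqref{Hindep} with one group of blocks formed by the $I_{k,j}$ and $J_{k,j}$ of levels $1,\dots,n-1$ (Fourier parameter $0$ on the gaps $J_{k,j}$) and the other group formed by $I_{n,0},J_{n,1},I_{n,1},\dots,J_{n,F-1},I_{n,F-1}$ (again parameter $0$ on the gaps); between them sits $J_{n,0}$, of length $|J_{n,0}|=2^{\lfloor\epsilon n\rfloor}2^{\lfloor\myrho n\rfloor}\ge c\,2^{(\myrho+\epsilon)n}$. The number of blocks involved is $N=O(2^{\myrho n})$ and every block has length $\le 2^n$, so \eqref{Hindep} gives, on the box,
  \begin{equation*}
  |\phi-\gamma|\le|\Phi-\Phi_1\Phi_n|\le C(1+2^n)^{CN}e^{-c|J_{n,0}|}\le C\exp\bigl(Cn2^{\myrho n}-c\,2^{(\myrho+\epsilon)n}\bigr),
  \end{equation*}
and since $2^{\epsilon n}/n\to\infty$ the negative term dominates, so $|\phi-\gamma|\le Ce^{-c\,2^{(\myrho+\epsilon)n}}$ for $n$ large.

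Finally I would feed this into \eqref{smoothing0} with the above $N$ and with $T'$ a suitable fixed power of $2^n$, say $T'=2^{3n}$. For the tail term, \eqref{borneL2} and $\norm{V}_{L^2}<\infty$ give $\norm{\tilde X_{k,j}}_{L^2}\le C2^{n/2}$ for every block of levels $1,\dots,n$, so by Chebyshev $\sum_j P(|\tilde X_{k,j}|\ge T')\le N\cdot C2^n/T'^2=O(2^{(\myrho-5)n})\le C4^{-n}$. For the second term, $\phi-\gamma$ is supported in a box of finite volume, so $\bigl[\int|\phi-\gamma|^2\bigr]^{1/2}\le C^N\sup|\phi-\gamma|$, whence $(C(d)T'^{d/2})^N\bigl[\int|\phi-\gamma|^2\bigr]^{1/2}\le C\exp\bigl(Cn2^{\myrho n}-c\,2^{(\myrho+\epsilon)n}\bigr)\le C4^{-n}$ for $n$ large, once again because $2^{(\myrho+\epsilon)n}$ outgrows $n2^{\myrho n}$. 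Adding the two bounds and enlarging $C$ to absorb the finitely many small $n$ (recall $\prok\le 1$ always) yields \eqref{eq_firststep}. The one delicate point is exactly this competition: the super-exponential decay $e^{-c\,2^{(\myrho+\epsilon)n}}$ coming from the gap must dominate both the factor from \eqref{Hindep} — polynomial in the block size, raised to a power $O(2^{\myrho n})$ — and the factor $T'^{dN/2}$ produced by the smoothing lemma; the surplus $2^{\epsilon n}$ in $|J_{n,0}|$ is precisely what lets the gap win.
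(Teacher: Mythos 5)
Your proof is correct and follows essentially the same route as the paper: cancel the $V$-factors to reduce to the untilded characteristic functions, apply \eqref{Hindep} with the intervals of levels $\le n$ as blocks (zero Fourier parameter on the gaps) and $J_{n,0}$ as the separating gap of length $\asymp 2^{(\myrho+\epsilon)n}$, then conclude with Lemma~\ref{lemsmoothbasic}. The only differences are cosmetic: you take $T'=2^{3n}$ with a Chebyshev bound via \eqref{borneL2} where the paper takes $T'=e^{2^{\epsilon n/2}}$ with a first-moment bound, and you make the volume factor $C^N$ in the $L^2$ estimate explicit; both yield the needed $O(4^{-n})$ bound.
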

\begin{pf}
The random process $(X_1,\ldots, X_n)$ takes its values in
$\mathbb{R}^{dD}$ for $D=\sum_{m=1}^n F(m) \leq\sum_{m=1}^{n}
2^{\beta m} \leq C 2^{\beta n}$. Moreover, each component in
$\mathbb{R}^d$ of this process is one of the $X_{n,j}$, hence it is a
sum of at most $2^n$ consecutive variables~$A_\ell$. On the
other hand, the interval $J_{n,0}$ is a gap between
$(X_j)_{j<n}$ and $X_{n}$, and its length $k$ is $C^{\pm1}
2^{\varepsilon n + \beta n}$. Let $\phi$ and $\gamma\dvtx\mathbb
{R}^{dD}\to
\mathbb{C}$ denote the respective characteristic functions of
$(X_1,\ldots,X_{n-1}, X_n)$ and $(X_1,\ldots,X_{n-1}, Q_n)$,
where $Q_n$ is distributed like $X_n$ and is independent of
$(X_1,\ldots,X_{n-1})$. The assumption (\ref{Hindep}) ensures
that for Fourier parameters $t_{m,j}$ all bounded by
$\varepsilon_0$, we have
\[
|\phi-\gamma|\leq C (1+2^n)^{C D} e^{-c k}
\leq C 2^{nC 2^{\beta n}} e^{-c 2^{\beta n+\varepsilon n}}
\leq C e^{-c' 2^{\beta n+\varepsilon n}},
\]
if $n$ is large enough.

Let $\tilde\phi$ and $\tilde\gamma$ be the characteristic
functions of, respectively, $(\tilde X_1,\ldots,\tilde X_n)$ and
$(\tilde X_1,\ldots,\tilde Q_n)$: they are obtained by
multiplying $\phi$ and $\gamma$ by the characteristic function
of $V$ in each variable. Since this function is supported in
$\{|t|\leq\varepsilon_0\}$, we obtain, in particular, that
%
%
\begin{equation}
|\tilde\phi-\tilde\gamma|\leq C e^{-c 2^{\beta n+\varepsilon n}}.
\end{equation}

We then use Lemma \ref{lemsmoothbasic} with $N=D$ and
$T'=e^{2^{\varepsilon n/2}}$ to get
\begin{eqnarray*}
&&\pi((\tilde X_1,\ldots,\tilde X_n), (\tilde X_1,\ldots,\tilde
X_{n-1},\tilde Q_n))
\\
&&\qquad\leq\sum_{m\leq n} \sum_{j< F(m)} P( |\tilde X_{m,j}|\geq
e^{2^{\varepsilon n/2}})
+ e^{C D 2^{\varepsilon n/2}} e^{-c 2^{\beta n+\varepsilon n}}.
\end{eqnarray*}
The second term is, again, bounded by $e^{-c'2^{\beta n+\varepsilon
n}}$, while each term in the first sum is bounded by
$e^{-2^{\varepsilon n/2}}E(|\tilde X_{m,j}|) \leq e^{-2^{\varepsilon
n/2}}\cdot C2^n$. Summing over $m$ and $j$, we obtain a bound
of the form $Ce^{-2^{\delta n}}$, which is stronger than
(\ref{eq_firststep}).
\end{pf}
\begin{cor}
\label{cor_couplagepartiel}
Let $\tilde R_n=(\tilde R_{n,j})_{j<F(n)}$ be distributed like
$\tilde X_n$ and such that the $\tilde R_n$ are independent
of each other. There then exist $C>0$ and a coupling between
$(\tilde X_1,\tilde X_2,\ldots)$ and $(\tilde R_1,\tilde
R_2,\ldots)$ such that for all $(n,j)$,
%
%
\begin{equation}
P( |\tilde X_{n,j} -\tilde R_{n,j}|\geq C 4^{-n}) \leq C 4^{-n}.
\end{equation}
\end{cor}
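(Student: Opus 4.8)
The plan is to iterate Lemma~\ref{lem1kqljsdf} over $n$ and then glue the resulting couplings together with the Berkes--Philipp lemma (Lemma~\ref{lem_jointwocouplings}). First I would apply Lemma~\ref{lem1kqljsdf}: for each $n$ it provides a coupling, on some probability space, of $(\tilde X_1,\dots,\tilde X_{n-1},\tilde X_n)$ with $(\tilde X_1,\dots,\tilde X_{n-1},\tilde Q_n)$ where $\tilde Q_n$ has the law of $\tilde X_n$ and is independent of the first $n-1$ blocks, the Prokhorov distance between the two being at most $C 4^{-n}$. By the Strassen--Dudley theorem (Theorem~\ref{thm_exists_coupling}) this yields a coupling between the two full vectors in which $(\tilde X_1,\dots,\tilde X_{n-1})$ is transported to itself up to an event of probability $\le C4^{-n}$ on which the discrepancy is $\le C4^{-n}$; in other words, there is a coupling of $(\tilde X_1,\dots,\tilde X_n)$ with a vector $(\tilde R'_1,\dots,\tilde R'_{n-1},\tilde R_n)$ whose law is that of $(\tilde R_1,\dots,\tilde R_n)$ restricted to the first $n$ blocks, with $P(|\tilde X_{m,j}-\tilde R'_{m,j}|\ge C4^{-n})\le C4^{-n}$ for all $m\le n$, $j<F(m)$.

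Next I would run this for $n=1,2,3,\dots$ and splice the couplings one after another using Lemma~\ref{lem_jointwocouplings}, exactly in the manner described after that lemma in the excerpt: the coupling at stage $n$ links $(\tilde X_1,\dots,\tilde X_n)$ to $(\tilde R_1,\dots,\tilde R_n)$ (abusing notation for the target law), and the coupling at stage $n+1$ links $(\tilde R_1,\dots,\tilde R_n)$ — viewed through its marginal — to $(\tilde R_1,\dots,\tilde R_{n+1})$; since the overlapping marginals agree (both are the joint law of the independent blocks $\tilde R_1,\dots,\tilde R_n$), Lemma~\ref{lem_jointwocouplings} glues them, and Lemma~\ref{weaklimit} (or a direct tightness argument on the product space) passes to the infinite coupling of $(\tilde X_{n,j})_{n,j}$ with $(\tilde R_{n,j})_{n,j}$. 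The point of chaining in this order is that once block $n$ has been coupled to its independent copy, it is never touched again at later stages except through the fixed coupling already built, so the pair $(\tilde X_{n,j},\tilde R_{n,j})$ is determined at stage $n$ and the estimate $P(|\tilde X_{n,j}-\tilde R_{n,j}|\ge C4^{-n})\le C4^{-n}$ survives.

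The main obstacle, and the point requiring care, is bookkeeping the marginals so that the Berkes--Philipp gluing is actually applicable at every stage: Lemma~\ref{lem1kqljsdf} only asserts closeness to a vector in which the \emph{first} $n-1$ blocks keep the law of the already-coupled $(\tilde R_1,\dots,\tilde R_{n-1})$, but a priori the coupling furnished by Theorem~\ref{thm_exists_coupling} need not preserve this joint law on the nose — it only preserves each of the two full marginals. One resolves this by noting that the target vector $(\tilde X_1,\dots,\tilde X_{n-1},\tilde Q_n)$ in Lemma~\ref{lem1kqljsdf} \emph{does} have $(\tilde X_1,\dots,\tilde X_{n-1})$ as its law for the first $n-1$ coordinates, and since all these laws are fixed (they are simply the distributions $\tilde X_m=X_m+V_m$), one can describe the stage-$n$ coupling as a law on $(\R^{dD_n})^2$ whose two marginals are the true law of $\tilde X_1,\dots,\tilde X_n$ and the true law of $\tilde R_1,\dots,\tilde R_n$; then at stage $n+1$ Lemma~\ref{lem_jointwocouplings} is invoked with $E_1=(\R^{d D_n})$ carrying the first marginal, $E_2=(\R^{d D_n})$ carrying the second, $E_3=\R^{d F(n+1)}$, the distribution $F$ being the stage-$n$ coupling and $G$ the obvious law linking $(\tilde R_1,\dots,\tilde R_n)$ to $(\tilde R_1,\dots,\tilde R_{n+1})$, which is a product because the $\tilde R_m$ are independent; the common marginal condition then holds by construction. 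After this, the stated estimate $P(|\tilde X_{n,j}-\tilde R_{n,j}|\ge C4^{-n})\le C4^{-n}$ is just a restatement of the conclusion of Lemma~\ref{lem1kqljsdf} composed with Theorem~\ref{thm_exists_coupling}, and (after absorbing the finitely many small-$n$ terms into the constant, and relabeling the $\tilde R$'s) this is exactly the corollary; the convergence $\sum_n 4^{-n}<\infty$ ensures, via Borel--Cantelli, that everything is consistent and nothing blows up.
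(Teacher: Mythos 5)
There is a genuine gap, and it sits exactly at the point your proof leans on most. A single application of Lemma~\ref{lem1kqljsdf} together with Strassen--Dudley does \emph{not} give a coupling of $(\tilde X_1,\dots,\tilde X_n)$ with a vector distributed like $(\tilde R_1,\dots,\tilde R_n)$: the target vector in that lemma is $(\tilde X_1,\dots,\tilde X_{n-1},\tilde Q_n)$, whose first $n-1$ coordinates keep the \emph{dependent} joint law of $(\tilde X_1,\dots,\tilde X_{n-1})$, whereas $(\tilde R_1,\dots,\tilde R_n)$ has all blocks mutually independent. So the law of $(\tilde X_1,\dots,\tilde X_{n-1},\tilde Q_n)$ is not the law of $(\tilde R_1,\dots,\tilde R_n)$, and the "stage-$n$ coupling" with the two marginals you describe in your third paragraph does not exist after one use of the lemma. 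To reach the fully independent target you must iterate the lemma, peeling off one block at a time; and each such step, being a Prokhorov/Strassen--Dudley coupling of whole vectors, perturbs \emph{all} the previously treated coordinates again (they are only equal up to an error of size $C4^{-j}$ outside an event of probability $C4^{-j}$, not almost surely). Your chaining claim that "once block $n$ has been coupled to its independent copy it is never touched again" therefore fails; relatedly, gluing the stage-$n$ coupling to the product law of $(\tilde R_1,\dots,\tilde R_{n+1})$ via Lemma~\ref{lem_jointwocouplings} produces no control whatsoever on $|\tilde X_{n+1}-\tilde R_{n+1}|$, since $\tilde X_{n+1}$ does not even live on the glued space and the new $\tilde R_{n+1}$ is independent of everything.

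The paper's proof handles precisely this bookkeeping: one first reduces to a finite horizon $N$ via Lemma~\ref{weaklimit} (the tool you mention only parenthetically, but it is essential, because the constants must be uniform in $N$), then applies Lemma~\ref{lem1kqljsdf} \emph{downward}, first making block $N$ independent, then block $N-1$, and so on, introducing intermediate processes $(\tilde R_1^n,\dots,\tilde R_n^n)$ distributed like $(\tilde X_1,\dots,\tilde X_n)$. The error at a fixed block $n$ then accumulates contributions from all later stages, and is controlled by the telescoping bound $|\tilde R_n^N-\tilde R_n|\le\sum_{j=n+1}^N|\tilde R_n^j-\tilde R_n^{j-1}|+|\tilde R_n^n-\tilde R_n|$, whose terms are $\le C4^{-j}$ outside events of probability $\le C4^{-j}$; the geometric series gives the stated $C'4^{-n}$ both for the deviation size and for the exceptional probability. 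Your write-up is missing both the iteration that actually achieves full independence and this accumulation-of-errors estimate, so as it stands the claimed bound $P(|\tilde X_{n,j}-\tilde R_{n,j}|\ge C4^{-n})\le C4^{-n}$ is not justified.
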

\begin{pf}
By Lemma \ref{weaklimit}, it is enough to build such a coupling
between $(\tilde X_1,\ldots,\tilde X_N)$ and $(\tilde
R_1,\ldots,\tilde R_N)$ for fixed $N$ (we just have to ensure
that the constant $C$ we obtain is independent of $N$, of
course).

We use Lemma \ref{lem1kqljsdf} to get a good coupling that
makes $\tilde X_N$ independent of the other variables, then again use
this lemma to make $\tilde X_{N-1}$ independent of the
other ones and so on. In the end, this yields the desired
coupling between $\tilde X$ and $\tilde R$.

Let us be more formal. For $n\leq N$, we denote by $(\tilde
R_1^n,\ldots, \tilde R_n^n)$ a process distributed like $(\tilde
X_1,\ldots,\tilde X_n)$. Also, let $\tilde R_n$ be distributed
like $\tilde X_n$, independent of everything else.
Lemma \ref{lem1kqljsdf} and the Strassen--Dudley Theorem
\ref{thm_exists_coupling} give a good coupling between $(\tilde
R_1^n,\ldots, \tilde R_n^n)$ and $(\tilde R_1^{n-1},\ldots,
\tilde R_{n-1}^{n-1}, \tilde R_n)$. Putting all those couplings
together on a single space (by Lemma
\ref{lem_jointwocouplings}), we obtain a space on which live, in
particular, $(\tilde R_1^N,\ldots, \tilde R_N^N)$ and $(\tilde
R_1,\ldots,\tilde R_N)$, which are the processes we are trying
to couple. Moreover,
\[
|\tilde R_n^N - \tilde R_n|\leq\sum_{j=n+1}^{N}|\tilde R_n^{j} -
\tilde R_n^{j-1}|
+|\tilde R_n^n-\tilde R_n|.
\]
If $|\tilde R_n^{j} - \tilde R_n^{j-1}| \leq C4^{-j}$ for $j\in
[n+1,N]$ and $|\tilde R_n^n-\tilde R_n|\leq C4^{-n}$, then we get
$|\tilde R_n^N - \tilde R_n| \leq C' 4^{-n}$ for some constant
$C'$ independent of $n$ and $N$. In particular, $P( |\tilde
R_n^N - \tilde R_n| > C' 4^{-n})$ is bounded by
\begin{eqnarray*}
&&\sum_{j=n+1}^{N}P(|\tilde R_n^{j} - \tilde R_n^{j-1}| > C4^{-j})
+ P(|\tilde R_n^n-\tilde R_n|> C4^{-n})
\\
&&\qquad
\leq\sum_{j=n}^N C 4^{-j}
\leq C' 4^{-n}.
\end{eqnarray*}
\upqed\end{pf}
\begin{lem}
\label{couple_local}
For any $n\in\mathbb{N}$, we have
%
%
\begin{equation}
\label{eq_secondstep}
\pi\bigl( (\tilde R_{n,j})_{0\leq j < F(n)}, (\tilde Y_{n,j})_{0\leq j
<F(n)}\bigr) < C 4^{-n},
\end{equation}
where $\tilde Y_{n,j}=Y_{n,j}+V_{n,j}$.
\end{lem}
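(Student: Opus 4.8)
\emph{Plan of proof.} Since $\tilde R_n$ is distributed like $\tilde X_n$, the quantity to estimate is the Prokhorov distance between the true joint law of $(\tilde X_{n,j})_{0\le j<F}$ and the product of the laws of the individual $\tilde X_{n,j}$ (this product being exactly the law of $(\tilde Y_{n,j})_j$). The plan is to decouple the blocks progressively, following the Cantor-like hierarchy of the gaps inside $[2^n,2^{n+1})$: the gaps $J_{n,1},\dots,J_{n,F-1}$ form a binary tree with leaves $I_{n,0},\dots,I_{n,F-1}$ and $F-1$ internal nodes, the unique rank $f-1$ gap $J_{n,F/2}$ being the root and a gap of rank $f-1-s$ sitting at depth $s$. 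I would introduce a chain of distributions $P_0,\dots,P_f$ on $\R^{dF}$: $P_0$ is the true joint law of $(\tilde X_{n,j})_j$; $P_s$ is obtained by cutting $[2^n,2^{n+1})$ at all gaps of rank $\ge f-s$ (the $2^s-1$ nodes of depth $<s$) and declaring the resulting $2^s$ groups of consecutive blocks to be mutually independent, each group keeping its true joint law; and $P_f$ is the fully factored law. Passing from $P_s$ to $P_{s+1}$ amounts to splitting each of the $2^s$ current groups in two across its central gap, which has rank $f-1-s$ and length $2^{\lfloor\epsilon n\rfloor}2^{f-1-s}$.

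Each $P_s$ is a product of $2^s$ laws $Q_{s,i}$, and $P_{s+1}$ replaces each $Q_{s,i}$ by the product $Q'_{s,i}$ of the laws of its two halves. Since the norm on $\R^{dF}$ is the supremum of the block norms, coupling the factors independently gives $\prok(P_s,P_{s+1})\le\sum_{i<2^s}\prok(Q_{s,i},Q'_{s,i})$, so the triangle inequality reduces everything to bounding $\prok(Q_{s,i},Q'_{s,i})$ for a single split. Here $Q_{s,i}$ and $Q'_{s,i}$ are distributions on $\R^{dN}$ with $N=2^{f-s}$; the crucial observation is that, because each $\tilde X_{n,j}=X_{n,j}+V_{n,j}$ carries the factor $V_{n,j}$ whose characteristic function is supported in $\{|t|\le\epsilon_0\}$, both characteristic functions $\phi$ and $\gamma$ of $Q_{s,i}$ and $Q'_{s,i}$ are supported in $\{|t_{n,j}|\le\epsilon_0\text{ for all }j\}$, and on that region the $V$-factors cancel in $\phi-\gamma$, leaving the analogous difference for the $X_{n,j}$. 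Writing the $N$ blocks $I_{n,j}$ of the group as consecutive blocks, with the $N-1$ internal gaps inserted as blocks carrying Fourier parameter $0$ and the central gap playing the role of $k$, assumption \eqref{Hindep} bounds this difference by $C(1+2^n)^{C2^{f-s}}e^{-c2^{\lfloor\epsilon n\rfloor}2^{f-1-s}}$ on the relevant region.

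I would then feed this into Lemma~\ref{lemsmoothbasic} with $T'=2^{Kn}$ for a large fixed integer $K$. The first term $\sum_j Q_{s,i}(|x_j|\ge T')$ is at most $N\cdot C2^n/T'^2\le C2^{(\myrho+1-2K)n}$, by Chebyshev, $N\le 2^f\le C2^{\myrho n}$, and the bound $\norm{X_{n,j}}_{L^2}\le C|I_{n,j}|^{1/2}\le C2^{n/2}$ from \eqref{borneL2}. The second term is $(C_1T'^{d/2})^N$ times the Fourier estimate above; since $N=2^{f-s}$ and $\log T'$ is linear in $n$, its logarithm is $2^{f-1-s}\bigl(O(n)-c\,2^{\lfloor\epsilon n\rfloor}\bigr)+O(1)$, which, because $2^{\lfloor\epsilon n\rfloor}/n\to\infty$, is $\le -c'2^{\epsilon n}$ for some $c'>0$ and all $n$ past a threshold $n_0$ that does not depend on $s$. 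Hence, for $n\ge n_0$, $\prok(Q_{s,i},Q'_{s,i})\le C2^{(\myrho+1-2K)n}+C e^{-c'2^{\epsilon n}}$, and summing over the $2^f-1\le 2\cdot 2^{\myrho n}$ groups gives $\prok(P_0,P_f)\le C2^{(2\myrho+1-2K)n}+C2^{\myrho n}e^{-c'2^{\epsilon n}}$, which is $<C''4^{-n}$ once $K$ is chosen with $2K\ge 2\myrho+3$ (for instance $K=3$, since $\myrho<1$) and $n$ is large enough; the finitely many remaining values of $n$ are absorbed into the constant via $\prok\le 1$.

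The genuinely delicate point is the design of the Cantor decomposition, not any single estimate. A naive decoupling that peeled off one block at a time would, at each step, incur the polynomial factor $(1+2^n)^{C\cdot(\text{number of remaining blocks})}$ with about $2^{\myrho n}$ remaining blocks, while only a gap of length $2^{\lfloor\epsilon n\rfloor}$ (with $\epsilon<\myrho$) would be available, and the bound would collapse. The hierarchical structure is exactly what keeps, at every level $s$, the number $\approx 2^{f-s}$ of blocks on each side of the gap being used balanced against that gap's length $2^{\lfloor\epsilon n\rfloor}2^{f-1-s}$, so that the ratio of the competing exponents is $2^{\lfloor\epsilon n\rfloor-1}/n$, which tends to $\infty$ uniformly in $s$. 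Making this uniformity explicit, and padding our blocks-with-internal-gaps correctly into the exact format required by \eqref{Hindep}, are the only points that need care.
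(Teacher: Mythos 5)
Your proposal is correct and follows essentially the same route as the paper: a hierarchical (dyadic) decoupling along the Cantor-like gaps, with the smoothing variables $V_{n,j}$ restricting to Fourier parameters $|t|\leq\epsilon_0$ so that \eqref{Hindep} plus Lemma~\ref{lemsmoothbasic} bounds each split, and subadditivity of the Prokhorov distance over independent factors plus the triangle inequality over the $f$ levels. The only differences are cosmetic (indexing the levels from the root rather than the leaves, taking $T'=2^{Kn}$ with a Chebyshev bound instead of $T'=e^{2^{\epsilon n/2}}$ with a Markov bound), which yields the required $C4^{-n}$ rather than the paper's stronger $Ce^{-2^{\delta n}/2}$ but suffices for the lemma.
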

\begin{pf}
Let $f=f(n)=\lfloor\beta n\rfloor$ and $F=2^f$. We will first
make the variables $(\tilde R_{n,j})_{j<F/2}$ independent of
the variables $(\tilde R_{n,j})_{F/2\leq j < F}$ by using the
large gap $J_{n, F/2}$, then proceed in each remaining half
using the gap in the middle of this half and so on.

We define $\tilde Y_{n,j}^i$ for $0 \leq i \leq f$ as
follows: for $0\leq k < 2^{f-i}$, the random variable $\tilde
\mathcal{Y}_{n,k}^i:=(\tilde Y_{n, j}^i)_{k2^i \leq j
<(k+1)2^i}$ is distributed like $(\tilde X_{n, j})_{k2^i \leq j
<(k+1)2^i}$, and $\tilde\mathcal{Y}_{n,k}^i$ is independent of
$\tilde\mathcal{Y}_{n,k'}^i$ if $k\not=k'$. Hence, the process
$(\tilde Y_{n,j}^f)_{0\leq j< F}$ coincides with $(\tilde
R_{n,j})_{0\leq j< F}$, while $(\tilde Y_{n,j}^0)_{0\leq j< F}$
coincides with $(\tilde Y_{n,j})_{0\leq j< F}$.

Writing $\tilde Y^i=(\tilde Y^i_{n,j})_{j<F}$, let us estimate
$\pi( \tilde Y^i, \tilde Y^{i-1})$ for $1\leq i \leq f$.
Since the variables $\tilde\mathcal{Y}^i_{n,k}$ are already
independent of one another for $0\leq k < 2^{f-i}$, we have
%
%
\begin{equation}
\label{jlmqmlskdjfqs}
\pi( \tilde Y^i, \tilde Y^{i-1}) \leq\sum_{k=0}^{2^{f-i}-1}
\pi( \tilde\mathcal{Y}^i_{n,k}, (\tilde\mathcal
{Y}^{i-1}_{n,2k},\tilde\mathcal{Y}
^{i-1}_{n,2k+1})).
\end{equation}
Moreover, $\tilde\mathcal{Y}^i_{n,k}$ is made of $2^i$ sums of
variables $A_\ell$ along blocks, each of these blocks has
length at most $2^{n-f}$ and there is a gap $J_{n,
k2^i+2^{i-1}}$ of size $C^{\pm1} 2^{\varepsilon n+i}$ in the
middle. Therefore,\vspace*{1pt} (\ref{Hindep}) ensures that the difference
between the characteristic functions of $\tilde\mathcal{Y}^i_{n,k}$
and $(\tilde\mathcal{Y}^{i-1}_{n,2k},\tilde\mathcal
{Y}^{i-1}_{n,2k+1})$ is
at most
\[
C (1+2^{n-f})^{C 2^i} e^{-c 2^{\varepsilon n+i}}
\leq C e^{Cn 2^i -c 2^{\varepsilon n+i}}
\leq C e^{-c' 2^{\varepsilon n+i}},
\]
if $n$ is large enough. Taking $N=2^i$ and $T'=e^{2^{\varepsilon
n/2}}$ in Lemma \ref{lemsmoothbasic}, we obtain (with
computations very similar to those in the proof of Lemma
\ref{lem1kqljsdf})
\[
\pi( \tilde\mathcal{Y}^i_{n,k}, (\tilde\mathcal
{Y}^{i-1}_{n,2k},\tilde\mathcal{Y}
^{i-1}_{n,2k+1}))
\leq C e^{- 2^{\delta n}}
\]
for some $\delta>0$. Summing over $k$ in (\ref{jlmqmlskdjfqs})
and then over $i$, we obtain
\[
\pi(\tilde Y^f, \tilde Y^0) \leq\sum_{i=1}^f \pi( \tilde Y^i,
\tilde Y^{i-1})
\leq f 2^f C e^{- 2^{\delta n}} \leq C e^{- 2^{\delta n}/2}.
\]
This gives, in particular, (\ref{eq_secondstep}).
\end{pf}
\begin{pf*}{Proof of Proposition \ref{prop_make_all_indep}}
We combine the coupling constructed in Corollary
\ref{cor_couplagepartiel} with the couplings constructed in
Lemma \ref{couple_local} for each $n$, using the
Strassen--Dudley Theorem \ref{thm_exists_coupling}. We obtain a
coupling between $(\tilde X_{n,j})$ and $(\tilde Y_{n,j})$ such
that $P(|\tilde X_{n,j}-\tilde Y_{n,j}|\geq C4^{-n}) \leq
C4^{-n}$. Since $\sum_{n,j} 4^{-n}<\infty$, the Borel--Cantelli
lemma ensures that almost surely
%
%
\begin{equation}
\label{eq_avec_tildes}
\sup_{(n,j)} \biggl|\sum_{(n',j')\prec(n,j)} \tilde X_{n',j'} -
\tilde Y_{n',j'} \biggr| <\infty.
\end{equation}
Moreover, $\tilde X_{n,j}=X_{n,j}+V_{n,j}$, where the random
variables $V_{n,j}$ are centered, independent and in $L^2$. By
the law of the iterated logarithm, almost surely, for any
$\alpha>0$,
\[
\biggl| \sum_{(n',j')\prec(n,j)} V_{n',j'} \biggr|=o\bigl( \operatorname
{Card}\{
(n',j') \mid(n',j')\prec(n,j)\}^{1/2+\alpha}\bigr).
\]
Moreover, $\operatorname{Card}\{(n',j') \mid(n',j')\prec(n,j)\} \leq
\sum_{n'=1}^n \sum_{j'<F(n')} 1 \leq C 2^{\beta n}$. We obtain
almost surely
\[
\biggl| \sum_{(n',j')\prec(n,j)} X_{n',j'}-\tilde X_{n',j'} \biggr|
=o\bigl( 2^{\beta n (1/2+\alpha)}\bigr).
\]
A similar estimate holds for $Y_{n,j}-\tilde Y_{n,j}$. With
(\ref{eq_avec_tildes}), this proves the proposition.
\end{pf*}

\subsection{Step 2: Coupling with Gaussian random vectors}
We are going to use Corollary 3 of Za{\u\i}tsev \cite{zaitsevLp}.
Let us
recall it here, for the convenience of the reader, in a form that
is suitable for us (it is obtained from the statement of
Za{\u\i}tsev by taking $r=10/e$, $\gamma=q$, $L_\gamma=M^q$,
$n=b$ and $z'=Mz/5$).
\begin{prop}
\label{prop_zaitsev}
Let $Y_0,\ldots,Y_{b-1}$ be independent centered $\mathbb{R}^d$-valued
random vectors. Let $q\geq2$ and
$M= (\sum_{j=0}^{b-1} E |Y_j|^q )^{1/q}$. Assume\vspace*{1pt} that
there exists a sequence $0=m_0<m_1<\cdots<m_s=b$ satisfying the
following condition. Letting $\zeta_k=Y_{m_{k}}+\cdots+
Y_{m_{k+1}-1}$ and $B_k=\operatorname{cov}\zeta_k$, we assume that
%
%
\begin{equation}
\label{qsldkfjlmqksfd}
100 M^2|v|^2 \leq\langle B_k v, v\rangle\leq100C M^2 |v|^2
\end{equation}
for all $v\in\mathbb{R}^d$, all $0\leq k< s$ and some constant
$C\geq1$. There then exists a coupling between
$(Y_0,\ldots,Y_{b-1})$ and a sequence of independent Gaussian
random vectors $(S_0,\ldots, S_{b-1})$ such that $\operatorname
{cov}S_j=\operatorname{cov}
Y_j$ and, moreover,
%
%
\begin{equation}
\label{qsidfqsdfqsfdqsdf}
P \Biggl( \max_{1\leq i\leq b} \Biggl|\sum_{j=0}^{i-1} Y_j -S_j
\Biggr| \geq M z \Biggr)
\leq C' z^{-q} + \exp(-C'z )
\end{equation}
for all $z\geq C' \log n$. Here, $C'$ is a positive quantity
depending only on $C$, the dimension $d$ and the integrability
exponent $q$.
\end{prop}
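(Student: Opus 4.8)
The plan is to derive this proposition as a pure reformulation of Corollary~3 of \cite{zaitsev_Lp}: once the correspondence of notation is fixed, there is nothing to prove beyond a check that the hypotheses and the conclusion translate correctly. So the first step is simply to reproduce Za{\u\i}tsev's corollary, keeping track of its parameters $r$, $\gamma$, $L_\gamma$, $n$ and its deviation variable $z'$: it gives, for independent centered $\R^d$-valued vectors $Y_0,\dots,Y_{n-1}$ with $\sum_j E|Y_j|^\gamma\le L_\gamma$ and a partition into blocks whose covariances are two-sidedly controlled in terms of $r$ and $L_\gamma$, a coupling with independent Gaussian vectors $S_j$ having $\cov S_j=\cov Y_j$ and satisfying a tail bound of the shape $C'(L_\gamma^{1/\gamma}/z')^\gamma+\exp(-C'z'/L_\gamma^{1/\gamma})$ for the maximal partial-sum discrepancy, valid once $z'$ is at least a fixed constant times $L_\gamma^{1/\gamma}\log n$.

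The second step is to insert the dictionary advertised just before the statement, namely $\gamma=q$, $L_\gamma=M^q$, $n=b$, $r=10/e$ and $z'=Mz/5$, and to verify that each ingredient lands where it should. The moment hypothesis $\sum_j E|Y_j|^\gamma\le L_\gamma$ becomes $\sum_j E|Y_j|^q\le M^q$, which holds with equality by the very definition of $M$. The value $r=10/e$ is calibrated precisely so that Za{\u\i}tsev's normalized two-sided bound on the block covariances $B_k$ turns into the condition \eqref{qsldkfjlmqksfd}, with the clean constants $100$ and $100C$. Since $L_\gamma^{1/\gamma}=M$, the substitution $z'=Mz/5$ rescales the deviation variable, collapses the admissibility threshold $z'\ge cM\log n$ to $z\ge C'\log n$, and turns $C'(L_\gamma^{1/\gamma}/z')^\gamma+\exp(-C'z'/L_\gamma^{1/\gamma})$ into $C'(5/z)^q+\exp(-C'z/5)$, which is of the announced form $C'z^{-q}+\exp(-C'z)$ after renaming the absolute constants; all of these depend only on $C$, the dimension $d$ and $q$.

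I do not expect a genuine mathematical obstacle: the substance is entirely in \cite{zaitsev_Lp}. The only points that require care are bookkeeping ones. One must match Za{\u\i}tsev's numerical normalizations exactly so that the constants $100$ and $100C$ of \eqref{qsldkfjlmqksfd} come out right (this is the sole reason for the odd-looking choice $r=10/e$), check that the cited coupling produces Gaussian vectors with $\cov S_j=\cov Y_j$ for every individual index $j$ and not merely block by block, and confirm that the admissible range of the deviation parameter is indeed of the form $z\ge C'\log n$ with $C'$ controlled solely by $C$, $d$ and $q$. Placing the two statements side by side makes all of this immediate.
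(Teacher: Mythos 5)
Your proposal matches the paper exactly: the paper gives no independent proof of this proposition, presenting it as a restatement of Corollary~3 of \cite{zaitsev_Lp} obtained via precisely the substitutions you use ($r=10/e$, $\gamma=q$, $L_\gamma=M^q$, $n=b$, $z'=Mz/5$). Your bookkeeping checks (moment bound, covariance normalization, rescaling of the deviation threshold) are the right things to verify and are all that is required.
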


The following lemma easily follows from the previous
proposition.
\begin{lem}
\label{lem_cor_zaitsev}
For $n\in\mathbb{N}$, there exists a coupling between
$(Y_{n,0},\ldots,Y_{n,F(n)-1})$ and $(S_{n,0},\ldots,
S_{n,F(n)-1})$, where the $S_{n,j}$ are independent centered
Gaussian vectors with $\operatorname{cov}S_{n,j}=\operatorname
{cov}Y_{n,j}$, such that
%
%
\begin{equation}
\sum_n P \Biggl( \max_{1\leq i\leq F(n)} \Biggl|\sum_{j=0}^{i-1}
Y_{n,j} -S_{n,j} \Biggr|
\geq2^{((1-\beta)/2+\beta/p+\varepsilon/2)n} \Biggr)
<\infty.
\end{equation}
\end{lem}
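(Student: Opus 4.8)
The plan is to apply Za{\u\i}tsev's coupling, Proposition~\ref{prop_zaitsev}, to the independent centered vectors $Y_{n,0},\dots,Y_{n,F(n)-1}$ with $b=F(n)$ and integrability exponent $q=p-\eta$, where $\eta>0$ is a small number to be fixed at the end; for the finitely many values of $n$ for which the estimates below fail one simply takes any coupling with the prescribed Gaussian marginals, which does not affect the convergence of the series. The first task is to control $M=\bigl(\sum_{j}E|Y_{n,j}|^{q}\bigr)^{1/q}$. Since $Y_{n,j}$ is distributed like $X_{n,j}=\sum_{\ell\in I_{n,j}}A_\ell$ and all the $I_{n,j}$ have the same length $L_n\le 2^{n-f}$ with $f=\lfloor\myrho n\rfloor$, Proposition~\ref{Lpbound} gives $E|Y_{n,j}|^{q}\le C\,L_n^{q/2}\le C\,2^{q(n-f)/2}$, whence $M\le C\,2^{f/q}2^{(n-f)/2}\le C\,2^{((1-\myrho)/2+\myrho/q)n}$; choosing $\eta$ small makes $\myrho/q$ as close to $\myrho/p$ as desired, so that $M\le C\,2^{((1-\myrho)/2+\myrho/p+\epsilon/8)n}$. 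In the other direction, since $\Sigma^2$ is nondegenerate, \eqref{covOK} yields $E|Y_{n,j}|^{2}=\operatorname{tr}(\cov X_{n,j})=L_n\operatorname{tr}\Sigma^2+o(L_n)$, so that $L_n\le CM^2$ and, more precisely, by Jensen $M^2/L_n\ge c\,F(n)^{2/q}\to\infty$. Here it is essential that $q>2$: together with the bound $M^2/L_n\le C\,F(n)^{2/q}$ from Proposition~\ref{Lpbound}, this makes $M^2/L_n$ much smaller than $F(n)$.

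Next I would construct the partition $0=m_0<\dots<m_s=b$ demanded by Proposition~\ref{prop_zaitsev} by taking consecutive groups of $g:=\lceil \lambda M^2/L_n\rceil$ blocks (the last group being allowed between $g$ and $2g$ blocks), where $\lambda$ is a numerical constant depending only on $\Sigma^2$. The bounds of the previous paragraph show $1\le g<F(n)$ for $n$ large, so such a partition exists. Since the $Y_{n,j}$ are independent, $B_k=\cov\zeta_k=\sum_{j}\cov X_{n,j}$ over the blocks of the $k$-th group, and each $\cov X_{n,j}=L_n\Sigma^2+O(L_n^{\alpha})$ by \eqref{covOK} with $\alpha<1$, so $B_k=g_k L_n\Sigma^2+o(M^2)$ with $g_k\in[g,2g]$; since $g_kL_n=\lambda M^2(1+o(1))$ as $n\to\infty$, the nondegeneracy of $\Sigma^2$ forces $100M^2|v|^2\le\langle B_kv,v\rangle\le 100CM^2|v|^2$ for a suitable $\lambda$ and $C\ge1$, uniformly in $k$ and in $n$ large. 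This verification of \eqref{qsldkfjlmqksfd} is the main point: it is where nondegeneracy and the quantitative covariance control \eqref{covOK} are genuinely used, and one has to check that the error term coming from \eqref{covOK} is indeed negligible against $M^2$ (which holds because $L_n\to\infty$ and $\alpha<1$).

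With the partition in hand, Proposition~\ref{prop_zaitsev} provides a coupling of $(Y_{n,j})_{j}$ with independent Gaussian vectors $(S_{n,j})_{j}$, $\cov S_{n,j}=\cov Y_{n,j}$, such that \eqref{qsidfqsdfqsfdqsdf} holds. It remains to choose $z$. Taking $z=2^{\epsilon n/8}$, one has $z\ge C'\log F(n)$ for $n$ large (as $\log F(n)\le\myrho n\log2$), and the bound on $M$ gives $Mz\le C\,2^{((1-\myrho)/2+\myrho/p+\epsilon/4)n}\le 2^{((1-\myrho)/2+\myrho/p+\epsilon/2)n}$ for $n$ large. Hence
\[
 P\Bigl(\max_{1\le i\le F(n)}\Bigl|\sum_{j=0}^{i-1}Y_{n,j}-S_{n,j}\Bigr|\ge 2^{((1-\myrho)/2+\myrho/p+\epsilon/2)n}\Bigr)\le C'2^{-q\epsilon n/8}+e^{-C'2^{\epsilon n/8}},
\]
and the right-hand side is summable over $n$, which is exactly the assertion of the lemma. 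The only real obstacle is the covariance comparison of the previous paragraph; the rest is bookkeeping with the two ranges of exponents.
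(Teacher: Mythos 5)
Your proposal is correct and follows essentially the same route as the paper: bound $M$ via Proposition~\ref{Lpbound}, use \eqref{covOK} together with the nondegeneracy of $\Sigma^2$ to group the $Y_{n,j}$ into consecutive blocks whose covariances are comparable to $M^2$ (the paper phrases the grouping more implicitly, you make the group size $\lceil \lambda M^2/L_n\rceil$ explicit), and then apply Proposition~\ref{prop_zaitsev} with $z$ of the form $2^{\epsilon n/c}$ and sum over $n$. The only nitpick is that $g_kL_n=\lambda M^2(1+o(1))$ fails for the terminal group (where $g_k$ may be close to $2g$), but since all you need is two-sided comparability of $\langle B_k v,v\rangle$ with $M^2|v|^2$ up to constants, the verification of \eqref{qsldkfjlmqksfd} goes through unchanged.
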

\begin{pf}
Let $q\in(2,p)$ and $n\in\mathbb{N}$. We want to apply Proposition
\ref{prop_zaitsev} to the independent vectors $(Y_{n,j})_{0\leq
j< F}$, where $F=F(n)=2^{\lfloor\beta n\rfloor}$.

By Proposition \ref{Lpbound}, we have $\| Y_{n,j} \|_{L^q}\leq
C 2^{(1-\beta)n/2}$. This implies that $M:=
(\sum_{j=0}^{F-1} E |Y_{n,j}|^q )^{1/q}$ satisfies
%
%
\begin{equation}
\label{borneM}
M \leq C 2^{\beta n/q}\cdot2^{(1-\beta)n/2}.
\end{equation}

By the assumptions of Theorem \ref{main_thm}, $\operatorname{cov}
Y_{n,j}=|I_{n,j}|\Sigma^2+o( |I_{n,j}|^\alpha)$ for any
$\alpha>0$. In particular,
%
%
\begin{equation}
\operatorname{cov}Y_{n,j}=2^{(1-\beta)n} \Sigma^2 \bigl(1+o(1)\bigr).
\end{equation}
Moreover, we assume that the matrix $\Sigma^2$ is nondegenerate.
Therefore, there exists a constant $C_0$ such that
for any large enough $n$, any $0\leq m < m' < F(n)$ and
any vector
$v$,
we have
\[
C_0^{-1}(m'-m) 2^{(1-\beta)n} |v|^2 \leq\Biggl\langle\sum
_{j=m}^{m'-1} \operatorname{cov}Y_{n,j} v,v \Biggr\rangle
\leq C_0(m'-m) 2^{(1-\beta)n} |v|^2.
\]
For $m=0$ and $m'=F$, the quantity
$(m'-m)2^{(1-\beta)n}=2^{\lfloor\beta n\rfloor} \cdot
2^{(1-\beta)n}$ is much larger than $M^2$, by (\ref{borneM}).
On the other hand, each individual term (for $m'=m+1$) is
bounded by
\[
|{\operatorname{cov}}Y_{n,j}| |v|^2 \leq\| Y_{n,j} \|_{L^2}^2 |v|^2
\leq
\| Y_{n,j} \|_{L^q}^2 |v|^2 \leq M^2 |v|^2.
\]
Therefore, we can group the $Y_{n,j}$ into consecutive blocks
so that (\ref{qsldkfjlmqksfd}) is satisfied for some constant
$C$.

Applying Proposition \ref{prop_zaitsev}, we get a coupling
between $(Y_{n,0},\ldots,Y_{n,F-1})$ and $(S_{n,0},\ldots,
S_{n,F-1})$ such that
%
%
\begin{equation}
P \Biggl( \max_{1\leq i\leq F} \Biggl|\sum_{j=0}^{i-1} Y_{n,j}
-S_{n,j} \Biggr| \geq2^{\varepsilon n/3} M \Biggr)
\leq C 2^{-q\varepsilon n/3}
\end{equation}
by (\ref{qsidfqsdfqsfdqsdf}), for $z=2^{\varepsilon n/3}$. This
quantity is summable in $n$. Since $2^{\varepsilon n/3} M \leq
2^{((1-\beta)/2+\beta/p+\varepsilon/2)n}$ if $q$ is close enough
to $p$ and $n$ is large enough, this completes the proof of the
lemma.
\end{pf}
\begin{lem}
\label{lem_make_var_equal}
Let $Z_{n,j}$ be independent Gaussian random vectors such that
$\operatorname{cov}Z_{n,j}=|I_{n,j}|\Sigma^2$. There then exists a coupling
between $(S_{n,j})$ and $(Z_{n,j})$ such that almost surely
%
%
\begin{equation}
\label{eq_erreurs_variance}
\sum_{(n',j')\prec(n,j)} S_{n',j'}-Z_{n',j'}=o\bigl( 2^{ (\beta+\varepsilon)n/2}\bigr).
\end{equation}
\end{lem}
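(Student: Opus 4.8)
The goal is to couple the Gaussian vectors $S_{n,j}$ (with covariance $\cov Y_{n,j}$, which differs from $|I_{n,j}|\Sigma^2$ only by a term $o(|I_{n,j}|^\alpha)$) with Gaussian vectors $Z_{n,j}$ having exactly the covariance $|I_{n,j}|\Sigma^2$, so that the accumulated error of the partial sums stays $o(2^{(\myrho+\epsilon)n/2})$. The plan is to do this block by block in $n$, exploiting the fact that a difference of two centered Gaussians with nearly equal covariances is itself a small centered Gaussian.

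First I would observe that on the block of index $n$, the vectors $(S_{n,j})_{j<F(n)}$ and $(Z_{n,j})_{j<F(n)}$ are each collections of independent centered Gaussians; since Gaussian vectors are determined by their covariances, I can realize them on a common space with $Z_{n,j}=S_{n,j}+W_{n,j}$ where $W_{n,j}$ is a centered Gaussian with covariance $\cov Z_{n,j}-\cov S_{n,j}$, independent of $S_{n,j}$, provided that this difference of covariances is itself positive semi-definite — which need not hold. To get around this, the standard trick is to realize both $S_{n,j}$ and $Z_{n,j}$ as linear images of one underlying standard Gaussian vector $G_{n,j}$ in $\R^d$: write $S_{n,j}=\sigma_{n,j}G_{n,j}$ and $Z_{n,j}=\tau_{n,j}G_{n,j}$ where $\sigma_{n,j}\sigma_{n,j}^*=\cov S_{n,j}$ and $\tau_{n,j}\tau_{n,j}^*=|I_{n,j}|\Sigma^2$ are square roots. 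Then $|S_{n,j}-Z_{n,j}|=|(\sigma_{n,j}-\tau_{n,j})G_{n,j}|\leq \|\sigma_{n,j}-\tau_{n,j}\|\,|G_{n,j}|$. Since $\cov S_{n,j}-|I_{n,j}|\Sigma^2=o(|I_{n,j}|^\alpha)=o(2^{(1-\myrho)\alpha n})$ for every $\alpha>0$, and the matrix square root is Hölder (or, on the nondegenerate cone where $|I_{n,j}|\Sigma^2$ is bounded below, Lipschitz) in a neighborhood, we get $\|\sigma_{n,j}-\tau_{n,j}\| = o(2^{(1-\myrho)\alpha' n})$ for any $\alpha'>0$; choosing $\alpha'$ small this is $o(2^{\epsilon n/4})$, say.

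Next, within block $n$ the partial sum error is $\sum_{j'<j}(S_{n,j'}-Z_{n,j'})=\sum_{j'<j}(\sigma_{n,j'}-\tau_{n,j'})G_{n,j'}$, a sum of $F(n)\leq 2^{\myrho n}$ independent centered vectors each of size $o(2^{\epsilon n/4})|G_{n,j'}|$. By Rosenthal's inequality or a direct Gaussian tail bound, $\max_{i\leq F(n)}|\sum_{j'<i}(S_{n,j'}-Z_{n,j'})|$ is, outside an event of probability $\leq C 2^{-2n}$ (using the Gaussian tail of $\max|G_{n,j'}|$, which is $O(\sqrt{\log F(n)})=O(\sqrt n)$ with overwhelming probability, plus a maximal inequality for the partial sums), bounded by $C 2^{\myrho n/2}\cdot 2^{\epsilon n/4}\cdot\sqrt{n}\leq 2^{(\myrho+\epsilon)n/2}$ for $n$ large. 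Summing these couplings over $n$ via Lemma~\ref{lem_jointwocouplings} (and the remark following it), and applying Borel--Cantelli to the exceptional events $\sum_n 2^{-2n}<\infty$, I get a single coupling on which, almost surely for $n$ large, the within-block errors are all $\leq 2^{(\myrho+\epsilon)n/2}$ and hence summable in the telescoping sense; combined with the fact that $\Card\{(n',j'):(n',j')\prec(n,j)\}\leq C2^{\myrho n}$ so that any fixed bounded-per-block contribution also stays $o(2^{(\myrho+\epsilon)n/2})$, this yields \eqref{eq_erreurs_variance}.

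The main obstacle is controlling the matrix square root: one must turn the covariance estimate $\cov Y_{n,j}=|I_{n,j}|\Sigma^2+o(|I_{n,j}|^\alpha)$ into an estimate on $\|\sigma_{n,j}-\tau_{n,j}\|$, which is delicate because the matrix square root is only Hölder-$\tfrac12$ near a degenerate matrix. Here the nondegeneracy of $\Sigma^2$ (assumed throughout this section) rescues us: $|I_{n,j}|\Sigma^2$ grows like $2^{(1-\myrho)n}$ in every direction, so the square root is Lipschitz on the relevant region and $\|\sigma_{n,j}-\tau_{n,j}\|\leq C\,2^{-(1-\myrho)n/2}\cdot o(2^{(1-\myrho)\alpha n})$, which is more than enough. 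The rest — splitting off a small independent Gaussian, a maximal inequality, Gaussian tails, Borel--Cantelli, and assembling the couplings with Lemma~\ref{lem_jointwocouplings} — is routine and parallels the arguments already used in Steps (1) and (2).
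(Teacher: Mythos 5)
Your argument is correct, but it constructs the coupling differently from the paper. The paper's proof handles the possible failure of positive semi-definiteness of $\cov S_{n,j}-|I_{n,j}|\Sigma^2$ not through matrix square roots but through an isotropic buffer: it introduces $E_{n,j}=\boN(0,|I_{n,j}|\Sigma^2+2^{\alpha n}I_d)$ and writes $E_{n,j}$ both as $S_{n,j}$ plus an independent Gaussian $\boN(0,M_{n,j})$ with $|M_{n,j}|\leq C2^{\alpha n}$ and as $Z_{n,j}$ plus an independent $\boN(0,2^{\alpha n}I_d)$; gluing via Lemma~\ref{lem_jointwocouplings} gives $D_{n,j}=S_{n,j}-Z_{n,j}$ centered with $\norm{D_{n,j}}_{L^2}\leq C2^{\alpha n/2}$, and then Etemadi's maximal inequality, Chebyshev and Borel--Cantelli give a per-block bound $2^{(\myrho+\epsilon/2)n/2}$. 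Your route instead couples $S_{n,j}$ and $Z_{n,j}$ deterministically through a common Gaussian seed, $S_{n,j}=\sigma_{n,j}G_{n,j}$, $Z_{n,j}=\tau_{n,j}G_{n,j}$, and converts the covariance estimate into $\|\sigma_{n,j}-\tau_{n,j}\|$ via a perturbation bound for the matrix square root; this is where your key analytic input lies, and it is legitimate (in fact the dimension-dependent H\"older-$\tfrac12$ bound $\|A^{1/2}-B^{1/2}\|\leq C\|A-B\|^{1/2}$ already suffices with $\alpha$ small, so the nondegeneracy of $\Sigma^2$ is not even needed for this step, though it is available here). What the paper's buffer trick buys is that no matrix analysis beyond positive definiteness is needed; what your trick buys is that the difference $S_{n,j}-Z_{n,j}$ is itself an explicit small Gaussian, making the tail and maximal estimates immediate. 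One small caution on your wording: bounding the block-$n'$ error merely by $2^{(\myrho+\epsilon)n'/2}$ and ``telescoping'' would only give $O(2^{(\myrho+\epsilon)n/2})$ for the cumulative sum over completed blocks, not the required $o(\cdot)$; you need the strict slack you actually have (your bound $C2^{\myrho n/2}2^{\epsilon n/4}\sqrt{n}$, or the paper's exponent $(\myrho+\epsilon/2)n/2$) so that the geometric sum over $n'\leq n$ remains $o(2^{(\myrho+\epsilon)n/2})$. With that phrasing fixed, your proof goes through.
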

\begin{pf}
Let $\alpha>0$. Let $E_{n,j}=\mathcal{N}(0, |I_{n,j}|\Sigma^2 +
2^{\alpha n}I_d)$, where $I_d$ is the identity matrix of
dimension $d$. By assumption, $\operatorname{cov}
S_{n,j}=|I_{n,j}|\Sigma^2+o( 2^{\alpha n})$. In particular, if
$n$ is large enough, we can write $|I_{n,j}|\Sigma^2+ 2^{\alpha
n}I_d=\operatorname{cov}S_{n,j}+ M_{n,j}$, where the matrix $M_{n,j}$ is
positive definite and $|M_{n,j}|\leq C 2^{\alpha n}$.
Therefore, $E_{n,j}$ is the sum of $S_{n,j}$ and an independent
random variable distributed like $\mathcal{N}(0, M_{n,j})$. In the
same way, $E_{n,j}$ is the sum of $Z_{n,j}$ and of an
independent Gaussian $\mathcal{N}(0, 2^{\alpha n} I_d)$. Putting those
decompositions on a single space, using Lemma
\ref{lem_jointwocouplings}, we obtain a coupling between
$(S_{n,j})$ and $(Z_{n,j})$ such that the difference
$D_{n,j}=S_{n,j}-Z_{n,j}$ is centered and where
$\| D_{n,j} \|_{L^2} \leq C 2^{\alpha n/2}$.

We claim that this coupling satisfies the conclusion of the
lemma if $\alpha<\varepsilon/2$. Indeed, by Etemadi's inequality
(\cite{billinsgleyconvergence}, Paragraph M19), we have, for any
$n$,
\begin{eqnarray*}
&&
P \Biggl( {\max_{1\leq i \leq F(n)}} \Biggl|\sum_{j=0}^{i-1}
D_{n,j} \Biggr|
> 2^{(\beta+\varepsilon/2)n/2} \Biggr)
\\
&&\qquad
\leq C \max_{1\leq i \leq F(n)} P \Biggl( \Biggl|\sum_{j=0}^{i-1}
D_{n,j} \Biggr|
> 2^{(\beta+\varepsilon/2)n/2}/3 \Biggr)
\\
&&\qquad
\leq C \max_{1\leq i \leq F(n)} E \Biggl( \Biggl| \sum_{j=0}^{i-1}
D_{n,j} \Biggr|^2 \Biggr)
\bigg/2^{(\beta+\varepsilon/2)n}
\\
&&\qquad
\leq C \sum_{j=0}^{F(n)-1} E(|D_{n,j}|^2) / 2^{(\beta+\varepsilon/2)n}
\leq C 2^{\beta n} 2^{\alpha n} / 2^{(\beta+\varepsilon/2)n}.
\end{eqnarray*}
This is summable. Therefore, almost surely for large enough
$n$ and for $1\leq i\leq F(n)$, we have $ |\sum_{j=0}^{i-1}
D_{n,j} |\leq2^{(\beta+\varepsilon/2)n/2}$. The estimate
(\ref{eq_erreurs_variance}) follows.
\end{pf}

Putting together the couplings constructed in Lemmas
\ref{lem_cor_zaitsev} and \ref{lem_make_var_equal}, we obtain a
coupling satisfying the conclusions of Step 2.

\subsection{Step 4: Handling the maxima}

We recall that $i_{n,j}$ is the smallest element of the
interval $I_{n,j}$.
\begin{lem}
Almost surely when $(n,j)\to\infty$,
\[
\max_{m<|I_{n,j}|} \Biggl|\sum_{\ell=i_{n,j}}^{i_{n,j}+m} A_\ell
\Biggr| = o\bigl(2^{((1-\beta)/2+\beta/p+\varepsilon)n}\bigr).
\]
\end{lem}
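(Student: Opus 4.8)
The plan is to reduce the maximum over $m<|I_{n,j}|$ of the partial sums $\sum_{\ell=i_{n,j}}^{i_{n,j}+m}A_\ell$ to a first-moment (Chebyshev/Etemadi) estimate combined with Borel--Cantelli along the lexicographically ordered index set $\{(n,j)\}$. Since the almost sure conclusion is over \emph{all} pairs $(n,j)$, and there are only $F(n)=2^{\lfloor \myrho n\rfloor}\le C2^{\myrho n}$ values of $j$ for each $n$, it suffices to prove a summable-in-$n$ bound on $\sum_{j<F(n)}P(\max_{m<|I_{n,j}|}|\sum_{\ell=i_{n,j}}^{i_{n,j}+m}A_\ell|>2^{((1-\myrho)/2+\myrho/p+\epsilon)n})$; then Borel--Cantelli gives the claim.

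First I would note that each $I_{n,j}$ is an interval of length $|I_{n,j}|\le 2^{n-f}\le 2^{(1-\myrho)n}$, so $\max_{m<|I_{n,j}|}|\sum_{\ell=i_{n,j}}^{i_{n,j}+m}A_\ell|$ is a maximum of partial sums of at most $2^{(1-\myrho)n}$ consecutive terms of the process. The key input is the $L^{p-\eta}$ bound of Proposition~\ref{Lpbound}: for any $\eta>0$ there is $C$ with $\norm{\sum_{\ell=t}^{t+k-1}A_\ell}_{L^{p-\eta}}\le Ck^{1/2}$ uniformly in $t$. To pass from a bound on the sum to a bound on the running maximum I would invoke a maximal inequality. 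The cleanest route is Etemadi's inequality (already cited in the excerpt, \cite[Paragraph M19]{billinsgley:convergence}): $P(\max_{i\le K}|\sum_{j=0}^{i-1}W_j|>3r)\le 3\max_{i\le K}P(|\sum_{j=0}^{i-1}W_j|>r)$. Applying this with $W_\ell=A_{i_{n,j}+\ell}$ and then Markov's inequality at order $p-\eta$, we get, for $r=2^{((1-\myrho)/2+\myrho/p+\epsilon)n}$,
  \begin{equation*}
  P\left(\max_{m<|I_{n,j}|}\left|\sum_{\ell=i_{n,j}}^{i_{n,j}+m}A_\ell\right|>3r\right)
  \le 3\max_{k\le |I_{n,j}|}\frac{\norm{\sum_{\ell=0}^{k-1}A_{i_{n,j}+\ell}}_{L^{p-\eta}}^{p-\eta}}{r^{p-\eta}}
  \le \frac{C\,(2^{(1-\myrho)n})^{(p-\eta)/2}}{r^{p-\eta}}.
  \end{equation*}
Summing over the $\le C2^{\myrho n}$ values of $j$ multiplies this by $2^{\myrho n}$. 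The exponent of $2^n$ in the resulting bound is $\myrho+\tfrac{(1-\myrho)(p-\eta)}{2}-\bigl(\tfrac{1-\myrho}{2}+\tfrac{\myrho}{p}+\epsilon\bigr)(p-\eta)$; for $\eta$ small this is $\myrho+\tfrac{1-\myrho}{2}(p)- \tfrac{1-\myrho}{2}p-\myrho-\epsilon p+o(1)=-\epsilon p+o(1)<0$, so the series in $n$ converges. Borel--Cantelli then yields that almost surely, for all but finitely many $(n,j)$, the maximum is $\le 3r=o(2^{((1-\myrho)/2+\myrho/p+\epsilon)n})$ (the $o$ coming from replacing $\epsilon$ by $\epsilon/2$ throughout, which is harmless).

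I expect the main obstacle to be purely bookkeeping: checking that the exponent arithmetic genuinely closes with strict negativity after letting $\eta\to0$, and making sure the uniformity in the base point $i_{n,j}$ in Proposition~\ref{Lpbound} is used correctly (it is stated uniformly in $m$, so this is fine). One must also be slightly careful that Etemadi's inequality as quoted is for independent summands, whereas here the $A_\ell$ are only weakly dependent via \eqref{Hindep}; the fix is either to use a version of the maximal inequality valid without independence (for instance, the $L^{p-\eta}$ moment bound itself controls dyadic sub-block maxima via a standard Móricz-type chaining argument, which costs only a $\log$ factor that is absorbed) or to observe that the argument of Proposition~\ref{Lpbound} already produces, en route, the bound $\max_k\norm{\cdot}_{L^{p-\eta}}\le Ck^{1/2}$ for all sub-intervals, from which a Móricz maximal inequality gives $\norm{\max_{m<k}|\sum_{\ell=0}^{m}A_\ell|}_{L^{p-\eta}}\le C(\log k)\,k^{1/2}$. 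Using this in place of Etemadi only changes $r$ by a polynomial-in-$n$ factor, which does not affect summability. Everything else is a routine Borel--Cantelli wrap-up.
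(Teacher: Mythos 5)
Your proposal is correct and follows essentially the paper's route: the paper likewise combines the uniform $L^{q}$ bound of Proposition~\ref{Lpbound} (with $q<p$ close to $p$) with a maximal inequality valid without independence --- Serfling's Corollary B1, i.e.\ exactly the M\'oricz-type bound you fall back on --- and then applies Markov at level $2^{((1-\myrho)/2+\myrho/p+\epsilon/2)n}$ and Borel--Cantelli over the at most $C2^{\myrho n}$ indices $j$ per $n$, with the same exponent arithmetic. Note only that your primary displayed step via Etemadi's inequality is indeed inapplicable here since the $A_\ell$ are dependent, so the Serfling/M\'oricz fallback you describe is the actual proof; its logarithmic loss (which Serfling's version for moments of order $q>2$ avoids altogether) is sub-polynomial and harmless, as you say.
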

\begin{pf}
Let $q\in(2,p)$. In $L^{q}$, the partial sums
$\sum_{\ell=a}^{b-1} A_\ell$ are bounded by $C(b-a)^{1/2}$, by
Proposition \ref{Lpbound}. Let $M_a^b={\max_{a\leq n \leq b}}
| {\sum_{\ell=a}^{n-1} A_\ell}|$. Corollary B1 in
Serfling \cite{serfling} then also shows that
%
%
\begin{equation}
\label{controlemax}
\| M_a^b \|_{L^{q}} \leq C (b-a)^{1/2}
\end{equation}
for a different constant $C$. In particular, if
$\nu=(1-\beta)/2+\beta/p+\varepsilon/2$,
then
\begin{eqnarray*}
P\bigl( M_{i_{n,j}}^{i_{n,j}+|I_{n,j}|} \geq2^{\nu n}\bigr)
&\leq&
E \bigl( \bigl(M_{i_{n,j}}^{i_{n,j}+|I_{n,j}|}\bigr)^{q} \bigr)/2^{ \nu nq}
\\
&\leq& C |I_{n,j}|^{q/2}/2^{ \nu nq}.
\end{eqnarray*}
Moreover,
\[
\sum_{n,j} |I_{n,j}|^{q/2}/2^{ \nu nq}
\leq\sum_n 2^{\beta n}\cdot2^{ (1-\beta)n q/2 - \nu nq}.
\]
This sum is finite if $q$ is close enough to $p$.
The Borel--Cantelli lemma gives the desired result.
\end{pf}

\subsection{Step 6: The gaps}

Recall that $\mathcal{J}$ is the union of the gaps $J_{n,j}$. In this
subsection, we prove the following lemma.
\begin{lem}
\label{step6_mainlemma}
For any $\alpha>0$, there exists $C>0$ such that for any
interval $J\subset\mathbb{N}$,
\[
E \biggl|\sum_{\ell\in J\cap\mathcal{J}} A_\ell\biggr|^2 \leq C
|J\cap
\mathcal{J}|^{1+\alpha}.
\]
\end{lem}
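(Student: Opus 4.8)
The goal is an $L^2$ bound on the sum of $A_\ell$ over $J\cap\boJ$, where $\boJ$ is the union of all the gaps $J_{n,j}$. The plan is to reduce this to the $L^2$ estimates already available for sums over genuine intervals (Proposition~\ref{Lpbound}, i.e. $\norm{\sum_{\ell=m}^{m+n-1}A_\ell}_{L^2}\le Cn^{1/2}$), combined with a hierarchical decomposition of $J\cap\boJ$ that mirrors the triadic Cantor-like construction of the gaps themselves. The key structural observation is that the set $\boJ$, restricted to a dyadic block $[2^n,2^{n+1})$, is a disjoint union of gaps $J_{n,j}$ that come in at most $f(n)+1\approx\myrho n$ distinct ranks $r$, with $2^{f-1-r}$ gaps of rank $r$, each of length $2^{\lfloor\epsilon n\rfloor}2^r$; moreover gaps of the same rank $r$ are separated from one another by chunks of $[2^n,2^{n+1})$ of size at least $2^{n-f}\gg 2^{\lfloor\epsilon n\rfloor}2^r$ (for the relevant range of parameters). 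So within one rank, the gaps are widely spaced, and \eqref{Hindep} lets us treat their sums as essentially independent.

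First I would handle a single dyadic block: fix $n$ and a rank $r$, and consider $\Sigma_{n,r}:=\sum_{j:\,\mathrm{rk}(J_{n,j})=r}\sum_{\ell\in J_{n,j}}A_\ell$, a sum of $\le 2^{f}$ terms each of which is a block-sum of length $2^{\lfloor\epsilon n\rfloor}2^r$. Introduce the smoothed variables $X_j+V_j$ and use \eqref{Hindep} with the gaps between consecutive rank-$r$ pieces (of size $\ge 2^{n-f}$) to couple $(\sum_{\ell\in J_{n,j}}A_\ell)_j$ with an independent family, exactly as in Lemma~\ref{lemborneL2} and in Step~(1); the error in this coupling is superpolynomially small because the separating gaps have length $\ge 2^{n-f}$ which dominates the number of blocks times $\log$ of the block lengths. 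Estimating the independent sum by adding variances gives $\norm{\Sigma_{n,r}}_{L^2}^2\le C\cdot(\#\text{rank-}r\text{ gaps})\cdot(\text{length of each})= C\,2^{f-1-r}\cdot 2^{\lfloor\epsilon n\rfloor}2^r=C\,2^{\lfloor\epsilon n\rfloor}2^{f-1}$, uniformly in $r$; in other words, the $L^2$-norm of the rank-$r$ contribution is $\le C\,(2^{\lfloor\epsilon n\rfloor}2^{f})^{1/2}\le C\,|[2^n,2^{n+1})\cap\boJ|^{1/2}$ up to the $f\approx\myrho n$ ranks. Summing the $L^2$-norms over the $\le f+1$ ranks by the triangle inequality, $\norm{\sum_{\ell\in[2^n,2^{n+1})\cap\boJ}A_\ell}_{L^2}\le C(f+1)\,(2^{\lfloor\epsilon n\rfloor}2^{f})^{1/2}\le C\,n\,|[2^n,2^{n+1})\cap\boJ|^{1/2}$, and since $|[2^n,2^{n+1})\cap\boJ|\ge 2^{\lfloor\epsilon n\rfloor}$ grows geometrically, the polynomial-in-$n$ loss is absorbed into an extra $|\cdot|^{\alpha}$.

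Next I would patch the dyadic blocks together. A general interval $J\subset\N$ meets finitely many dyadic blocks $[2^n,2^{n+1})$; write $n_0<\dots<n_1$ for the indices actually met and split $J\cap\boJ$ accordingly. For the sums over full dyadic blocks $n_0<n<n_1$ one again uses \eqref{Hindep}: consecutive dyadic blocks are separated by the large initial gap $J_{n,0}$ of rank $f$ (length $2^{\lfloor\epsilon n\rfloor}2^{f}$), so the same coupling-and-add-variances argument makes the block-contributions essentially independent, and $\norm{\sum_{n_0<n<n_1}(\dots)}_{L^2}^2\le C\sum_{n_0<n<n_1}\norm{(\dots)_n}_{L^2}^2\le C\sum_n n^2\,|[2^n,2^{n+1})\cap\boJ|\le C\,n_1^2\,|J\cap\boJ|$, which is $\le C|J\cap\boJ|^{1+\alpha}$ since $n_1\le C\log|J\cap\boJ|$. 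The two boundary blocks $n_0$ and $n_1$ contribute sums over subsets of a single dyadic block, handled by the previous paragraph, and the triangle inequality across these $O(1)$ pieces costs only a constant.

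The main obstacle is the bookkeeping in the first paragraph: one must verify carefully that for every admissible rank $r\le f(n)$ the separating distance between two gaps of rank $r$ (namely at least one full good interval, of length $2^{n-f}-(f+2)2^{\lfloor\epsilon n\rfloor-1}$) genuinely dominates what \eqref{Hindep} requires — i.e. $c\cdot 2^{n-f}\gg C(n+m)\log(1+\max|b_{j+1}-b_j|)$ where $n+m\le 2^{f}$ and the block lengths are $\le 2^n$ — so that the coupling error is summable (indeed superexponentially small). This is where the constraint $\epsilon<1-\myrho$ (hence $f+\epsilon n<n$, so $2^{n-f}$ dominates the gap lengths $2^{\lfloor\epsilon n\rfloor}2^{r}$) is used, and it is exactly the same type of computation already carried out in Lemmas~\ref{lem1kqljsdf} and~\ref{couple_local}; once that is in place the rest is routine summation.
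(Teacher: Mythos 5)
Your overall architecture (group the gaps of a dyadic block by rank, decouple the same-rank gap sums via \eqref{Hindep} plus smoothing and Strassen--Dudley, add variances using Proposition~\ref{Lpbound}, then pay a polynomial-in-$n$ factor across ranks and dyadic blocks, absorbed into $|J\cap\boJ|^{\alpha}$ since $n\leq C\log|J\cap\boJ|$) is essentially the paper's. But the step you yourself single out as ``the main obstacle'' is where the argument genuinely breaks: you propose to use the \emph{good intervals} $I_{n,\cdot}$, of length about $2^{n-f}=2^{(1-\myrho)n}$, as the \eqref{Hindep}-separations when decoupling all the rank-$r$ gap sums of a block at once. To decouple the $t\approx 2^{f-r}$ rank-$r$ gaps (with the intervening material entered as blocks with Fourier parameter $0$), the polynomial prefactor in \eqref{Hindep} is of size $(1+\max|b_{j+1}-b_j|)^{C(n+m)}\leq (1+2^n)^{C2^{f-r}}=e^{Cn2^{f-r}}$, which for small ranks (e.g.\ $r=0$) is $e^{Cn2^{\myrho n}}$. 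Your separating gap only contributes $e^{-c2^{(1-\myrho)n}}$, so the required inequality $2^{(1-\myrho)n}\gg n\,2^{\myrho n}$ forces $\myrho<1/2$; the constraint $\epsilon<1-\myrho$ you invoke is irrelevant to this comparison. Since the proof of Theorem~\ref{main_thm} ultimately takes $\myrho=p/(2p-2)>1/2$ for every finite $p>2$, your coupling error is not small but enormous in exactly the regime where the lemma is used.

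The paper avoids this by never separating same-rank gaps with the good intervals: it decouples the rank-$r$ gaps \emph{hierarchically, using the gaps of rank $>r$} as separations (first $J_{n,F/2}$, then $J_{n,F/4}$ and $J_{n,3F/4}$, etc., exactly as in Lemma~\ref{couple_local}). At level $i$ of that splitting one compares a prefactor $e^{Cn2^{i}}$ with a separation of length $2^{\lfloor\epsilon n\rfloor+i}$, and the extra factor $2^{\lfloor\epsilon n\rfloor}$ wins at every level regardless of $\myrho$ --- this is precisely why the gap lengths were built with that factor and arranged Cantor-style. So your proof needs this replacement of the separating mechanism to be correct; with it, the rest of your plan (add variances, pay $O(n^2)$ across the $\leq C n^2$ classes $(n,r)$, control cross-block terms, and absorb logs) matches the paper, which also handles one point you gloss over: the two partial gaps cut by the endpoints of $J$ are intervals and are disposed of directly by Proposition~\ref{Lpbound} (the sets $J_0,J_2$ in the paper), and the decoupling estimate must be stated for an arbitrary \emph{subset} of the rank-$r$ gaps, not only the full rank class, to cover boundary blocks.
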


Together with the Gal--Koksma strong law of large numbers
(Proposition \ref{prop_galkoksma}) applied with $q=1+\alpha$,
this shows that for every $\alpha>0$, almost surely
\[
\sum_{\ell< k, \ell\in\mathcal{J}} A_\ell=o( |\mathcal{J}\cap
[0,k]|^{1/2+\alpha}).
\]
Moreover, for $k\in[2^n, 2^{n+1})$, we have [by
(\ref{sommeinterv})]
\[
|\mathcal{J}\cap[0,k]| \leq\sum_{m=1}^n \sum_{j=0}^{F(m)-1} |J_{m,j}|
\leq C\sum_{m=1}^n m 2^{\varepsilon m +\beta m}
\leq C n 2^{\varepsilon n+\beta n}
\leq C k^{ \beta+3\varepsilon/2}.
\]
With the previous equation, we obtain (if $\alpha$ is small
enough)
\[
\sum_{\ell< k, \ell\in\mathcal{J}} A_\ell
=
o(k^{\beta/2+\varepsilon}).
\]
This is (\ref{step6}), as desired.
\begin{pf*}{Proof of Lemma \ref{step6_mainlemma}}
We will freely use the convexity inequality
%
%
\begin{equation}
\label{convtriv}
(a_1+\cdots+a_k)^2 \leq k(a_1^2+\cdots+a_k^2).
\end{equation}
Let $J\subset\mathbb{N}$ be an interval. We decompose $J\cap\mathcal
{J}$ as
$J_0\cup J_1 \cup J_2$, where $J_0$ and $J_2$ are,
respectively, the first and the last interval of $J\cap\mathcal{J}$,
and $J_1$ is the remaining part (it is therefore a union of
full intervals of $\mathcal{J}$). Then
\[
\biggl|\sum_{\ell\in J\cap\mathcal{J}} A_\ell\biggr|^2
\leq3 \biggl|\sum_{\ell\in J_0} A_\ell\biggr|^2
+ 3 \biggl|\sum_{\ell\in J_1} A_\ell\biggr|^2
+ 3 \biggl|\sum_{\ell\in J_2} A_\ell\biggr|^2.
\]
The set $J_0$ is an interval, hence Proposition \ref{Lpbound}
gives $E |\sum_{\ell\in J_0} A_\ell|^2 \leq C |J_0|$.
A~similar inequality holds for $J_2$. To conclude the proof, it
is therefore sufficient to prove that
%
%
\begin{equation}
\label{mlkwjvxcopiuwvx}
E \biggl|\sum_{\ell\in J_1} A_\ell\biggr|^2
\leq C |J_1|^{1+\alpha}.
\end{equation}
Since $J_1$ is not always an interval, this does not follow
directly from Proposition~\ref{Lpbound}. However, this is
trivial if $J_1$ is empty. Otherwise, let $N$ be such that
$\max J_1 \in[2^N, 2^{N+1})$. Since the last interval in $J_1$
is contained in $[2^N,2^{N+1})$, its length is $2^{\lfloor
\varepsilon N \rfloor+r}$ for some $r\in[0, f(N)]$. In
particular, $N\leq C {\log}|J_1|$.

We defined the notion of rank of an interval $J_{n,j}$ in the paragraph
before equation (\ref{sommeinterv}): such an interval
has rank $r\in[0, f(n)]$ if its length is $2^{\lfloor\varepsilon
n\rfloor+ r}$. There are $2^{f(n)-1-r}$ intervals of rank $r$
in $[2^n, 2^{n+1})$ for $r<f(n)$ and one interval of rank
$f(n)$.

For $n\in\mathbb{N}$ and $0\leq r\leq f(n)$, let $\mathcal
{J}^{(n,r)}$ denote
the union of the intervals $J_{n,j}$ which are of rank $r$. The
number of sets $\mathcal{J}^{(n,r)}$ intersecting $J_1$ is at most
$\sum_{n=0}^N (f(n)+1) \leq CN^2$. Hence, by the convexity
inequality (\ref{convtriv}),
%
%
\begin{equation}
\label{qmlskdfjpoi}
\biggl|\sum_{\ell\in J_1} A_\ell\biggr|^2
\leq CN^2 \sum_{n,r} \biggl|\sum_{\ell\in J_1 \cap\mathcal{J}
^{(n,r)}}A_\ell\biggr|^2.
\end{equation}

Let us fix some $(n,r)$ and enumerate the intervals of
$\mathcal{J}^{(n,r)}$ as $K_1,\ldots, K_t$ for $t=2^{f(n)-1-r}$ if
$r<f(n)$ [or $t=1$ if $r=f(n)$]. Let $T_s=\sum_{\ell\in K_s}
A_\ell$. We claim that for any subset $S$ of $\{1,\ldots,t\}$,
%
%
\begin{equation}
\label{indepgaps}
E \biggl| \sum_{s\in S} T_s \biggr|^2 \leq C \sum_{s\in S} E|T_s|^2+C |S|.
\end{equation}
Let us show how this completes the proof. By Proposition
\ref{Lpbound}, we have $E|T_s|^2 \leq C |K_s|$. Therefore, for
any set $K$ which is a union of intervals in $\mathcal{J}^{(n,r)}$, we
obtain $E|\sum_{\ell\in K} A_\ell|^2 \leq C |K|$. This applies,
in particular, to $K=J_1 \cap\mathcal{J}^{(n,r)}$. Therefore,
(\ref{qmlskdfjpoi}) gives
\[
E \biggl|\sum_{\ell\in J_1} A_\ell\biggr|^2
\leq CN^2 \sum_{n,r} \bigl|J_1 \cap\mathcal{J}^{(n,r)}\bigr|
=CN^2 |J_1|.
\]
Together with the inequality $N\leq C \log|J_1|$, this proves
(\ref{mlkwjvxcopiuwvx}), as desired.

It remains to prove (\ref{indepgaps}). We first make the $T_s$
independent, as follows. Let $(U_1,\ldots,U_t)$ be independent
random variables such that $U_s$ is distributed like $T_s$.
Also, let $V_1,\ldots,V_t,V'_1,\ldots, V'_t$ be independent random
variables distributed like $V$ (constructed in Proposition
\ref{prop_V}) and write $\tilde T_s=T_s+V_s$, $\tilde
U_s=U_s+V'_s$. We claim that for some $\delta>0$ and $C>0$,
%
%
\begin{equation}
\pi( (\tilde T_1,\ldots, \tilde T_t), (\tilde U_1,\ldots,\tilde U_t))
< C e^{-2^{\delta n}}.
\end{equation}
To prove this estimate, we use the intervals of rank greater than $r$ as
gaps: we first make $\tilde T_1,\ldots,\tilde T_{t/2}$
independent of $\tilde T_{t/2+1},\ldots, \tilde T_{t}$ using the
gap $J_{n, F/2}$, then proceed in each half using the central
gaps $J_{n, F/4}$ and $J_{n, 3F/4}$, and so on. The details of
the argument are exactly the same as in the proof of
Lemma \ref{couple_local}.

Thanks to this estimate and the Strassen--Dudley Theorem
\ref{thm_exists_coupling}, we can construct a coupling between
$(\tilde T_j)_{1\leq j\leq t}$ and $(\tilde U_j)_{1\leq j\leq
t}$ such that, outside a set $O$ of measure at most $C
e^{-2^{\delta n}}$, we have $|\tilde T_j - \tilde U_j|\leq C
e^{-2^{\delta n}}$ for $1\leq j\leq t$. For any subset $S$ of
$\{1,\ldots, t\}$, we obtain (as in the proof of Lemma
\ref{lemborneL2})
\[
\biggl\| \sum_{s\in S} \tilde T_s \biggr\|_{L^2}
\leq\biggl\| 1_O \sum_{s\in S} \tilde T_s \biggr\|_{L^2}
+ \biggl\| 1_{O^c} \sum_{s\in S} \tilde T_s -\tilde U_s \biggr\|_{L^2}
+ \biggl\| \sum_{s\in S} \tilde U_s \biggr\|_{L^2}.
\]
The first term is bounded by $\| 1_O \|_{L^q}\| \sum_{s\in S} \tilde
T_s \|_{L^p}$, where $q$ is chosen such that
$1/p+1/q=1/2$. Hence, it is at most $C e^{-2^{\delta n}/q} 2^n
\leq C$. The second term is bounded by $C te^{-2^{\delta n}}
\leq C$. Therefore, $\| \sum_{s\in S} T_s \|_{L^2}$ is bounded
by
\[
\biggl\| \sum_{s\in S} \tilde T_s \biggr\|_{L^2}
+\biggl\| \sum_{s\in S}V_s \biggr\|_{L^2}
\leq C + \biggl\| \sum_{s\in S} U_s \biggr\|_{L^2}
+ \biggl\| \sum_{s\in S}V_s \biggr\|_{L^2}
+ \biggl\| \sum_{s\in S}V'_s \biggr\|_{L^2}.
\]
Since the $U_s$ are centered independent random variables,
$\| \sum_{s\in S} U_s \|_{L^2} =\break (\sum
E(U_s^2) )^{1/2} = ( \sum E(T_s^2) )^{1/2}$. In
the same way, we have $\| \sum_{s\in S}V_s \|_{L^2} =
\| \sum_{s\in S}V'_s \|_{L^2} = C |S|^{1/2}$. We get
$\| \sum_{s\in S} T_s \|_{L^2} \leq C + ( \sum
E(T_s^2) )^{1/2} + C|S|^{1/2}$, which implies
(\ref{indepgaps}).
\end{pf*}

\section{Completing the proof of the main theorems}

In this section, we first finish the proof of Theorem
\ref{main_thm} when the matrix $\Sigma^2$ is degenerate and
then derive Theorem \ref{main_thm_stat} from Theorem
\ref{main_thm}.
\begin{lem}
\label{lemzero}
Let $(A_0,A_1,\ldots)$ be a process satisfying the assumptions
of Theorem \ref{main_thm} for $\Sigma^2=0$. Then almost
surely $\sum_{\ell=0}^{n-1} A_\ell=o(n^\lambda)$ for any
$\lambda>p/(4p-4)$.
\end{lem}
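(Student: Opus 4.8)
\emph{Strategy and Step 1 (reductions and a strong $L^2$ bound).} The point is that when $\Sigma^2=0$ the hypothesis \eqref{covOK} forces the partial sums of $(A_\ell)$ to be much smaller than $\sqrt n$ in $L^2$; feeding this into the bootstrap of Proposition~\ref{Lpbound} improves its conclusion from $Cn^{1/2}$ to $Cn^{1/p+\epsilon}$ in $L^{p-\eta}$, and a maximal inequality together with Borel--Cantelli along the dyadic scale then yields the almost sure statement, using that $1/p<p/(4p-4)$ for $p>2$. First, since $\sum_\ell|E(A_\ell)|<\infty$, replacing $A_\ell$ by $A_\ell-E(A_\ell)$ alters $\sum_{\ell=0}^{n-1}A_\ell$ by a bounded quantity and changes neither \eqref{Hindep} (the characteristic functions get multiplied by unimodular constants) nor \eqref{covOK}, so I assume the $A_\ell$ centered. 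Then $\|\sum_{\ell=m}^{m+n-1}A_\ell\|_{L^2}^2=\operatorname{tr}\cov(\sum_{\ell=m}^{m+n-1}A_\ell)\le d\,|\cov(\sum_{\ell=m}^{m+n-1}A_\ell)|$, so \eqref{covOK} with $\Sigma^2=0$ gives, for every $\alpha>0$ and uniformly in $m,n$,
  \begin{equation*}
  \Bigl\|\sum_{\ell=m}^{m+n-1}A_\ell\Bigr\|_{L^2}\le C_\alpha\, n^{\alpha}.
  \end{equation*}

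\emph{Step 2 (bootstrap in $L^{p-\eta}$).} The plan is to rerun the proof of Proposition~\ref{Lpbound}, feeding in the bound of Step~1 in place of \eqref{borneL2}. With $a=\lfloor n^{1-\alpha}\rfloor$, $b=\lfloor n^{\alpha}\rfloor$ and the same decomposition of $[m,m+n)$ into $b$ blocks of length $\approx a$ separated by gaps of length $b^{2}$ (plus a remainder of length $\le Cn^{1-\alpha}$), the coupling of the smoothed block sums with independent copies — via Lemma~\ref{lemsmoothbasic}, \eqref{Hindep} and Theorem~\ref{thm_exists_coupling} — followed by Rosenthal's inequality (Proposition~\ref{prop_rosenthal}) applied to those copies yields, for $v_n\coloneqq\max_m\|\sum_{\ell=m}^{m+n-1}A_\ell\|_{L^{p-\eta}}$, a recursion of the form
  \begin{equation*}
  v_n\le C\, n^{3\alpha}+C\,b^{1/(p-\eta)}v_{a-b^{2}}+v_{n-ab}.
  \end{equation*}
The improvement over the proof of Proposition~\ref{Lpbound} is that the $L^2$-input to Rosenthal, namely $(\sum_j\|\widetilde Y_j\|_{L^2}^2)^{1/2}$, is now $O(n^{\alpha})$ instead of $O(n^{1/2})$ by Step~1, and is therefore absorbed into $Cn^{3\alpha}$. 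Since $a-b^{2}\le n^{1-\alpha}$ and $n-ab\le Cn^{1-\alpha}$, iterating this recursion from the trivial bound $v_n=O(n)$ contracts the exponent toward the fixed point $1/(p-\eta)$ of $r\mapsto(1-\alpha)r+\alpha/(p-\eta)$, as soon as $3\alpha<1/(p-\eta)$. Choosing $\alpha$ small and then $\eta$ small, I obtain: for every $s>1/p$ there are $\eta>0$ and $C>0$ such that $\|\sum_{\ell=m}^{m+n-1}A_\ell\|_{L^{p-\eta}}\le Cn^{s}$ uniformly in $m$.

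\emph{Step 3 (maximal inequality and Borel--Cantelli).} Fix $s>1/p$; choose $\eta>0$ with $1/(p-\eta)<s$ and, by Step~2, an exponent $s'\in(1/(p-\eta),s)$ with $\|\sum_{\ell=m}^{m+n-1}A_\ell\|_{L^{p-\eta}}\le Cn^{s'}$. Since then $E|\sum_{\ell=a}^{b-1}A_\ell|^{p-\eta}\le C(b-a)^{(p-\eta)s'}$ with exponent $(p-\eta)s'>1$, Corollary~B1 in \cite{serfling} upgrades this to $\|\max_{a\le k\le b}|\sum_{\ell=a}^{k-1}A_\ell|\|_{L^{p-\eta}}\le C(b-a)^{s'}$. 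Applying this on $[2^{n},2^{n+1}]$ and using Markov's inequality at level $2^{ns}$, one gets $P(M_n\ge 2^{ns})\le C2^{-n(s-s')(p-\eta)}$, where $M_n=\max_{2^{n}\le k\le 2^{n+1}}|\sum_{\ell=2^{n}}^{k-1}A_\ell|$; this is summable, so Borel--Cantelli gives $M_n\le 2^{ns}$ for all large $n$, almost surely. Summing over dyadic blocks, for $2^{N}\le k<2^{N+1}$ we obtain $|\sum_{\ell=0}^{k-1}A_\ell|\le |A_0|+\sum_{n=0}^{N}M_n\le C2^{Ns}\le Ck^{s}$ almost surely. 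Hence $\sum_{\ell=0}^{n-1}A_\ell=O(n^{s})$ a.s.\ for every $s>1/p$; since $p/(4p-4)=\tfrac14+\tfrac1{4p-4}>1/p$ for $p>2$ (equivalently $(p-2)^2>0$), choosing $s\in(1/p,\lambda)$ yields $\sum_{\ell=0}^{n-1}A_\ell=o(n^{\lambda})$ a.s.\ for every $\lambda>p/(4p-4)$.

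\emph{Main obstacle.} The only genuine work is Step~2: one must check that the recursion in the proof of Proposition~\ref{Lpbound} really does go through with the strengthened $L^2$ input and genuinely drives the $L^{p-\eta}$-exponent down to $1/p$, all while keeping track of the three small parameters $\alpha$, $\eta$ and $s'$ so that the hypothesis $(p-\eta)s'>1$ of Serfling's maximal inequality is met at the end. Everything else (the reductions, the dyadic Borel--Cantelli, and the final arithmetic comparison $1/p<p/(4p-4)$) is routine.
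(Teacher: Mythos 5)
Your proposal is correct in substance but takes a genuinely different route from the paper, and in fact proves more than the lemma asks. The paper does not improve the $L^{p-\eta}$ bound at all: it keeps the $Cn^{1/2}$ estimate of Proposition~\ref{Lpbound}, cuts $\N$ into blocks $I_n=[n^{\rho+1},(n+1)^{\rho+1})$, controls the sums up to the block endpoints directly from \eqref{covOK} with $\Sigma^2=0$ (Markov in $L^2$ plus Borel--Cantelli gives $O(n^{1/2+\epsilon})$ there), controls the fluctuations inside each block via the Serfling maximal inequality built on the $n^{1/2}$ bound (giving $O(n^{\rho/2+1/p+\epsilon})$), and balances $\rho=(p-2)/p$ to land exactly on the exponent $p/(4p-4)$. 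You instead feed the degenerate covariance bound back into the bootstrap of Proposition~\ref{Lpbound}, driving the $L^{p-\eta}$ exponent down to any $s>1/(p-\eta)$, and conclude by a maximal inequality and dyadic Borel--Cantelli; this yields the stronger almost sure bound $O(n^{1/p+\epsilon})$, of which the stated $o(n^{\lambda})$ for $\lambda>p/(4p-4)$ is a consequence since $(p-2)^2>0$. What the paper's route buys is economy (Proposition~\ref{Lpbound} is reused as a black box); what yours buys is a sharper, essentially optimal rate in the degenerate direction, at the cost of rerunning the recursion of Proposition~\ref{Lpbound} --- and your modifications there (replacing the Rosenthal $L^2$ term $n^{1/2}$ by $n^{O(\alpha)}$ and iterating $r\mapsto\max(3\alpha,(1-\alpha)r+\alpha/(p-\eta))$ towards its fixed point $1/(p-\eta)$) are the right ones. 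One caveat on your Step~3: Serfling's Corollary~B1, in the form used in the paper, assumes $E|S_{a,n}|^{\nu}\le g(a,n)^{\nu/2}$ with $g$ superadditive; casting your bound in that shape forces $g(a,n)=Cn^{2s'}$, which is not superadditive once $s'<1/2$. The maximal inequality you need is nevertheless true exactly under your stated condition $(p-\eta)s'>1$ (Móricz's moment inequality, or a direct dyadic chaining with $g(a,n)=Cn$ and total exponent $(p-\eta)s'>1$), so this is a citation to adjust rather than a gap in the argument.
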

\begin{pf}
Let $\beta>0$ and $\varepsilon>0$. Define a sequence of intervals
$I_n=[n^{\beta+1}, (n+1)^{\beta+1})\cap\mathbb{N}$ and denote by
$i_n=\lceil n^{\beta+1} \rceil$ the smallest element of $I_n$.
We claim that almost surely
%
%
\begin{equation}
\label{qslkdfjsxv}
\Biggl|\sum_{\ell=0}^{i_n-1} A_\ell\Biggr|=O(n^{1/2+\varepsilon})
\end{equation}
and
%
%
\begin{equation}
\label{qslkdfjsxv2}
\max_{i\in I_n} \Biggl|\sum_{\ell=i_n}^i A_\ell\Biggr| =
O(n^{\beta/2+1/p+\varepsilon}).
\end{equation}
Taking $\beta=(p-2)/p$ to equate the error terms, we get
$ |{\sum_{\ell\leq k} A_\ell}|=O(n^{1/2+\varepsilon})$,
where $n=n(k)$ is the index of the interval $I_n$ containing
$k$. Since $n \leq Ck^{1/(1+\beta)}$, we finally obtain an
error term $O(k^{\lambda+2\lambda\varepsilon})$ for
\[
\lambda=\frac{1}{2}\cdot\frac{1}{1+(p-2)/p}=\frac{p}{4p-4}.
\]
This concludes the proof. It remains to establish
(\ref{qslkdfjsxv}) and (\ref{qslkdfjsxv2}).

By (\ref{covOK}), $\|{ \sum_{\ell=0}^{i_n-1} A_\ell}\|
_{L^2}=O(n^{\alpha/2})$ for any $\alpha>0$. Therefore,
\[
P \Biggl(\sum_{\ell=0}^{i_n-1} A_\ell\geq n^{1/2+\varepsilon} \Biggr)
\leq
\Biggl\| \sum_{\ell=0}^{i_n-1} A_\ell\Biggr\|_{L^2}^2\bigg/n^{1+2\varepsilon}
\leq C n^\alpha/n^{1+2\varepsilon}.
\]
Taking $\alpha=\varepsilon$, this quantity is summable.
Equation (\ref{qslkdfjsxv}) follows.

Let $M_a^b={\max_{a\leq n \leq b}} | {\sum_{\ell=a}^{n-1}
A_\ell}|$. For $q<p$, we have
\begin{eqnarray*}
P \Biggl(\max_{i\in I_n} \Biggl|\sum_{\ell=i_n}^i A_\ell\Biggr|
\geq n^{\beta/2+1/p+\varepsilon} \Biggr)
&=&P(M_{i_n}^{i_{n+1}} \geq n^{\beta/2+1/p+\varepsilon})
\\
&\leq&\| M_{i_n}^{i_{n+1}} \|_{L^{q}}^{q}/n^{ q(\beta
/2+1/p+\varepsilon)}.
\end{eqnarray*}
By (\ref{controlemax}), $\| M_{i_n}^{i_{n+1}} \|_{L^{q}} \leq
C(i_{n+1}-i_n)^{1/2} \leq C n^{\beta/2}$. Therefore, the last
equation is bounded by $C/n^{q(1/p+\varepsilon)}$. This is
summable if $q$ is close enough to $p$. The estimate
(\ref{qslkdfjsxv2}) follows.
\end{pf}

Let $(A_0,A_1,\ldots)$ be a process satisfying the assumptions
of Theorem \ref{main_thm} for some matrix $\Sigma^2$. Replacing
$A_\ell$ by $A_\ell-E(A_\ell)$, we can assume that this process
is centered. We decompose $\mathbb{R}^d$ as an orthogonal sum $E\oplus
F$, where $\Sigma^2$ is nondegenerate on $E$ and vanishes on
$F$. The almost sure invariance principle along $E$ is proved
in Section \ref{secnondeg}, while Lemma \ref{lemzero} handles
$F$. This proves Theorem \ref{main_thm}.

Finally, Theorem \ref{main_thm_stat} follows directly from
Lemma \ref{covsommeborne} and Theorem \ref{main_thm}.

\printaddresses

\end{document}